\numberwithin{equation}{section}
\newtheorem{myDefn}{Definition}[section]
\newtheorem{myProp}[myDefn]{Proposition}
\newtheorem{myRem}[myDefn]{Remark}
\newtheorem{myLem}[myDefn]{Lemma}
\newtheorem{myTheorem}[myDefn]{Theorem}
\DeclareMathOperator*{\argmin}{argmin}
\DeclareMathOperator*{\minn}{minimize}
\def\eps{\varepsilon}
\def\nn{\mathrm{n}}
\def\R{\mathbb{R}}
\def\N{\mathbb{N}}
\def\HH{\mathrm{H}}
\def\LL{\mathrm{L}}
\def\MD{\mathcal{D}}
\newcommand{\fonction}[5]{\begin{array}[t]{lrcl}#1 :&#2 &\longrightarrow &#3\\&#4& \longmapsto &#5 \end{array}}
\newcommand{\dual}[2]{\left\langle #1 , #2 \right\rangle}
\newcommand{\hookdoubleheadrightarrow}{%
  \hookrightarrow\mathrel{\mspace{-15mu}}\rightarrow
}
\newlist{primenumerate}{enumerate}{1}
\setlist[primenumerate,1]{label={\roman*$'$}}
\title{Sensitivity analysis and optimal control for a friction problem in the linear elastic model}
\author{Lo\"ic Bourdin\footnote{Institut de recherche XLIM. UMR CNRS 7252. Universit\'e de Limoges, France. \texttt{loic.bourdin@unilim.fr}}, 
Fabien Caubet\footnote{Universit\'e de Pau et des Pays de l'Adour, E2S UPPA, CNRS, LMAP, UMR 5142, 64000 Pau, France. \texttt{fabien.caubet@univ-pau.fr}}, Aymeric Jacob de Cordemoy\footnote{Sorbonne Université, CNRS, Université Paris Cité, Laboratoire Jacques-Louis Lions (LJLL),
F-75005 Paris, France. \texttt{aymeric.jacob\_de\_cordemoy@sorbonne-universite.fr}}
}
\begin{document}

\maketitle

\begin{abstract}
    This paper investigates, without any regularization procedure, the sensitivity analysis of a mechanical friction problem involving the (nonsmooth) Tresca friction law in the linear elastic model. To this aim a recent methodology based on advanced tools from convex and variational analyses is used. Precisely we express the solution to the so-called Tresca friction problem thanks to the proximal operator associated with the corresponding Tresca friction functional. Then, using an extended version of twice epi-differentiability, we prove the differentiability of the solution to the parameterized Tresca friction problem, characterizing its derivative as the solution to a boundary value problem involving tangential Signorini's unilateral conditions. Finally our result is used to investigate and numerically solve an optimal control problem associated with the Tresca friction model.
\end{abstract}

\textbf{Keywords:} Sensitivity analysis, optimal control, mechanical friction problem, 
Tresca's friction law, Signorini's unilateral conditions, variational inequalities, proximal operator, twice epi-differentiability.

\medskip

\textbf{AMS Classification:} 49J40, 74M10, 74M15, 35Q93.  


\section{Introduction}

\paragraph{General context and motivation} On the one hand, \textit{optimal control theory} is the mathematical field aimed at finding the control of a given system that allows to minimize a given cost while satisfying given constraints. In order to numerically solve an optimal control problem, the numerical descent methods usually require to compute the gradient of the cost functional which usually depends on the solution to a partial differential equation with given boundary conditions. Therefore a crucial point is to perform the {\it sensitivity analysis} of the solution to the boundary value problem with respect to perturbations, in order to characterize its derivative.

On the other hand, \textit{solid mechanics} is the scientific field that studies the deformation of solids. A classical mechanical setting consists in a deformable body which is in contact with a rigid foundation, possibly sliding against it which causes friction on the contact surface. This friction can be mathematically modeled by the so-called {\it Tresca friction law} (see, e.g.,~\cite{KUSS}) which appears as a boundary condition involving nonsmooth inequalities depending on a friction threshold. Mechanical problems with friction  are usually investigated through the theory of {\it variational inequalities}, and the Tresca friction law causes nonlinearities and nonsmoothness in the corresponding variational formulations. 

As a consequence, in order to investigate optimal control problems with mechanical models involving the Tresca friction law, we have to perform the sensitivity analysis of nonsmooth variational inequalities. The standard methods found in the literature usually consist in regularization (see, e.g.,~\cite{MAUALLJOU,CHAUDET,CHAUDET2}, or~\cite[Section 10.4 Chapter 10]{KIKU}) and dualization (see, e.g.,~\cite{SOKOW88,SOKOZOLE2}) procedures. In a nutschell, regularization consists in replacing the nondifferentiable term by its Moreau's envelope to approximate the optimization problem associated with the model, thus the corresponding optimality condition is replaced by a smooth variational equality instead of a nonsmooth variational inequality. However this method does not take into account the exact characterization of the solution and perturbs the nonsmooth nature of the original physical model. The dualization method consists in describing the primal/dual pair of the model as a saddle point of the associated Lagrangian. The dual model leads to a characterization of its solution that involves only projection operators and thus Mignot's theorem (see~\cite{MIGNOT}) about conical differentiability can be applied. With this method, the derivative of the solution to the primal model, with respect to perturbations, can be obtained but is characterized only implicitly, due to the presence of dual elements.

In this paper the sensitivity analysis is performed using a recent methodology based on advanced tools from convex and variational analyses such as the notion of \textit{proximal operator} introduced by J.J.\ Moreau in~1965 (see~\cite{MOR}) and the notion of \textit{twice epi-differentiability} introduced by R.T.\ Rockafellar in~1985 (see~\cite{Rockafellar}). This methodology allows us to preserve the original nonsmooth nature of the model, that is, without using any regularization procedure, and to work only with the primal model.

\paragraph{Objective and methodology}
The present work follows from our previous papers~\cite{4ABC,BCJDC} in which the sensitivity analysis of boundary value problems involving the \textit{scalar version} of the Tresca friction law are performed. In this new paper we focus on the classical Tresca friction law which is about the linear elastic model. Precisely we consider the (parameterized) \textit{Tresca friction problem} given by
\begin{equation}\tag{TP\ensuremath{_{t}}}\label{PbNeumannDirichletTrescaParassssss2}\arraycolsep=2pt
\left\{
\begin{array}{rcll}
-\mathrm{div}(\mathrm{A}\mathrm{e}(u_{t})) & = & f_{t}   & \text{ in } \Omega , \\
u_{t} & = & 0  & \text{ on } \Gamma_{\mathrm{D}} ,\\
\sigma_\nn(u_{t}) & = & h_{t}  & \text{ on } \Gamma_{\mathrm{N}},\\
\left\|\sigma_\tau(u_{t})\right\|\leq g_{t} \text{ and } u_{t_\tau}\cdot\sigma_{\tau}(u_{t})+g_t\left\|u_{t_\tau}\right\| & = & 0  & \text{ on } \Gamma_{\mathrm{N}},
\end{array}
\right.
\end{equation} 
for all $t\geq0$, where $\Omega \subset \R^{d}$ is a nonempty bounded connected open subset of $\R^{d}$, with~$d\in\{2,3\}$ and with a $\mathcal{C}^{1}$-boundary denoted by $\Gamma:=\partial\Omega$, where $\nn$ is the outward-pointing unit normal vector to $\Gamma$ and where the boundary is decomposed as~$\Gamma=:\Gamma_{\mathrm{D}}\cup\Gamma_{\mathrm{N}}$, where $\Gamma_{\mathrm{D}}$ and $\Gamma_{\mathrm{N}}$ are two measurable (with positive measure) pairwise disjoint subsets of~$\Gamma$ such that almost every point of~$\Gamma_{\mathrm{N}}$ belongs to $\mathrm{int}_{\Gamma}({\Gamma_{\mathrm{N}}})$. Recall that, in linear elasticity,~$\mathrm{A}$ is the stiffness tensor,~$\mathrm{e}$ is the infinitesimal strain tensor,~$\sigma_{\nn}$ is the normal stress and~$\sigma_{\tau}$ is the shear stress (see Section~\ref{Mainresult1} for details). Moreover~$\left\|\cdot\right\|$ stands for the usual Euclidean norm of~$\R^d$ and we assume that $f_{t}\in\LL^{2}(\Omega,\R^d)$, $h_{t}\in\LL^{2}(\Gamma_{\mathrm{N}})$ and~$g_{t}\in\LL^{2}(\Gamma_{\mathrm{N}})$, with $g_{t}>0$ almost everywhere on $\Gamma_{\mathrm{N}}$, for all $t\geq0$. Finally we recall that the tangential boundary condition on~$\Gamma_{\mathrm{N}}$ is known as the Tresca friction law. 

\begin{myRem}\label{RemCondBordNeumann1}
In this paper we consider a model with a prescribed normal stress and a Tresca condition on~$\Gamma_{\mathrm{N}}$ (as studied for example in~\cite[Section 5.2 Chapter III]{DUVAUTLIONS}). This covers the particular case of zero normal stress (taking $h_t=0$) which corresponds to the case of no tensile or compressive stress. 
Nevertheless it would be possible to consider a contact problem by constraining the normal displacement, that is, by replacing $\sigma_{\nn}(u_t)=h_t$ by ${u_t}_{\nn}=0$ on~$\Gamma_{\mathrm{N}}$. This case corresponds to a bilateral contact (see, e.g.,~\cite[Remark~2.1]{CHOERNPIG}). This would not add any difficulty and could be dealt in the same way as the one presented in this paper (see Remark~\ref{remarextensigno} for details).
\end{myRem}

The main objective of this work is to characterize the derivative of the map $t\in\mathbb{R}_{+}\mapsto u_{t}\in\HH^{1}_{\mathrm{D}}(\Omega,\R^d)$ at $t=0$, where $\HH^{1}_{\mathrm{D}}(\Omega,\R^d):=\{ w\in\HH^{1}(\Omega,\R^d) \mid w=0 \text{ \textit{a.e.} on } \Gamma_{\mathrm{D}}\}$ and where the abbreviation \textit{a.e.} stands for \textit{almost everywhere}.
However the norm $\left\|\cdot\right\|$ which appears in the Tresca friction law generates nonsmooth terms in the variational formulation of Problem~\eqref{PbNeumannDirichletTrescaParassssss2} given by: find~$u_{t}\in\HH^{1}_{\mathrm{D}}(\Omega,\R^d)$ such that
\begin{multline*}
\displaystyle\int_{\Omega}\mathrm{A}\mathrm{e}(u_t):\mathrm{e}(w-u_t)+\int_{\Gamma_{\mathrm{N}}}g_t\left\|w_\tau\right\|-\int_{\Gamma_{\mathrm{N}}}g_t\left\|{u_t}_\tau\right\| \geq\int_{\Omega}f_t\cdot\left(w-u_t\right)\\+\int_{\Gamma_{\mathrm{N}}}h_t\left(w_\nn-{u_t}_\nn\right), \qquad \forall w\in\HH^{1}_{\mathrm{D}}(\Omega,\R^d),
\end{multline*}
for all~$t \geq 0$. Nevertheless recall that $u_t$ can be expressed, using the proximal operator (see Definition~\ref{proxi}), as
$$
\displaystyle u_{t}=\mathrm{prox}_{\Phi(t,\cdot)}(F_{t}),
$$
where $F_{t}\in\HH^{1}_{\mathrm{D}}(\Omega,\R^d)$ is the unique solution to the (smooth) \textit{parameterized Dirichlet-Neumann problem} given by: find~$F_{t}\in\HH^{1}_{\mathrm{D}}(\Omega,\R^d)$ such that
\begin{equation*}
\displaystyle\int_{\Omega}\mathrm{A}\mathrm{e}(F_t):\mathrm{e}(w)=\int_{\Omega}f_t\cdot w+\int_{\Gamma_{\mathrm{N}}}h_t w_\nn, \qquad \forall w\in\HH^{1}_{\mathrm{D}}(\Omega,\R^d),
\end{equation*}
for all~$t \geq 0$, and where $\Phi$ is the parameterized Tresca friction functional defined by
\begin{equation*}
\displaystyle\fonction{\Phi}{\mathbb{R}_{+}\times \HH^{1}_{\mathrm{D}}(\Omega,\R^d)}{\R}{(t,w)}{\displaystyle \Phi(t,w):=\int_{\Gamma_{\mathrm{N}}}g_{t}\left\|w_\tau\right\|.}
\end{equation*}
Similarly to our previous paper~\cite{BCJDC}, to deal with the differentiability (in a generalized sense) of the parameterized proximal operator~$\mathrm{prox}_{\Phi(t,\cdot)} : \HH^{1}_{\mathrm{D}}(\Omega,\R^d) \to \HH^{1}_{\mathrm{D}}(\Omega,\R^d)$, we will invoke the notion of twice epi-differentiability for convex functions introduced by R.T.~Rockafellar in~1985 (see~\cite{Rockafellar}) which leads to the \textit{protodifferentiability} of the corresponding proximal operators. Actually, since the work by R.T.~Rockafellar deals only with nonparameterized convex functions, we will use instead the recent work~\cite{8AB} in which the notion of twice epi-differentiability has been extended to parameterized convex functions (see Definition~\ref{epidiffpara}). 

\paragraph{Main result}
With the previous methodology and under some appropriate assumptions described in Theorem~\ref{caractu0derivDNT}, we prove that the map $t\in\mathbb{R}_{+}\mapsto u_{t}\in\HH^{1}_{\mathrm{D}}(\Omega,\R^d)$ is differentiable at~$t=0$, and its derivative $u'_{0}\in\HH^{1}_{\mathrm{D}}(\Omega,\R^d)$ is given by
$$
\displaystyle u_{0}'=\mathrm{prox}_{\mathrm{D}_{e}^{2}\Phi(u_{0}|F_{0}-u_{0})}(F_{0}'),
$$
where $\mathrm{D}_{e}^{2}\Phi(u_{0}|F_{0}-u_{0})$ stands for the second-order epi-derivative (see Definition~\ref{epidiffpara}) of the parameterized Tresca friction functional~$\Phi$ at~$u_{0}$ for $F_{0}-u_{0}$, and where $F'_{0}\in\HH^{1}_{\mathrm{D}}(\Omega,\R^d)$ is the derivative at~$t=0$ of the map $t\in\mathbb{R}_{+}\mapsto F_{t}\in \HH^{1}_{\mathrm{D}}(\Omega,\R^d)$. Moreover we prove that $u'_{0}\in\HH^{1}_{\mathrm{D}}(\Omega,\R^d)$ exactly corresponds to the unique weak solution to the \textit{tangential Signorini problem}
\begin{equation*}
{\arraycolsep=2pt
\left\{
\begin{array}{rcll}
-\mathrm{div}(\mathrm{A}\mathrm{e}(u'_{0})) & = & f'_0   & \text{ in } \Omega , \\[5pt]
u'_{0} & = & 0  & \text{ on } \Gamma_{\mathrm{D}} ,\\[5pt]
\sigma_{\nn}(u'_{0}) & = & h'_0  & \text{ on } \Gamma_{\mathrm{N}} ,\\[5pt]
u'_{0_\tau} & = & 0  & \text{ on } \Gamma_{\mathrm{N}^{u_0,g_0}_{\mathrm{T}}},\\[5pt]
\sigma_{\tau}(u'_{0})+\frac{g_{0}}{\left\|u_{0_\tau}\right\|}\left(u'_{0_\tau}-\left(u'_{0_\tau}\cdot \frac{u_{0_\tau}}{\left\|u_{0_\tau}\right\|}\right)\frac{u_{0_\tau}}{\left\|u_{0_\tau}\right\|}\right) & = & -g'_0\frac{u_{0_\tau}}{\left\|u_{0_\tau}\right\|}  & \text{ on } \Gamma_{\mathrm{N}^{u_0,g_0}_{\mathrm{R}}} ,\\[15pt]
u'_{0_\tau}\in\R_{-}\frac{\sigma_{\tau}(u_0)}{g_0}, \left(\sigma_{\tau}(u'_0)-g'_0 \frac{\sigma_{\tau}(u_0)}{g_0}\right)\cdot \frac{\sigma_{\tau}(u_0)}{g_0}\leq0 \\ \text{ and } u'_{0_\tau}\cdot\left(\sigma_{\tau}(u'_0)-g'_0 \frac{\sigma_{\tau}(u_0)}{g_0}\right)  & = & 0  & \text{ on } \Gamma_{\mathrm{N}^{u_0,g_0}_{\mathrm{S}}},
\end{array}
\right.}
\end{equation*}
where $\Gamma_{\mathrm{N}}$ is decomposed (up to a null set) as $\Gamma_{\mathrm{N}^{u_0,g_0}_{\mathrm{T}}}\cup\Gamma_{\mathrm{N}^{u_0,g_0}_{\mathrm{R}}}\cup\Gamma_{\mathrm{N}^{u_0,g_0}_{\mathrm{S}}}$ (see details in Theorem~\ref{caractu0derivDNT}), and where, for almost all $s\in\Gamma_{\mathrm{N}^{u_0,g_0}_{\mathrm{S}}}$, $\R_{-}\frac{\sigma_{\tau}(u_0)(s)}{g_0(s)}:=\{ y\in\R^d \mid \exists  \nu \leq0 \text{ such that } y=\nu \frac{\sigma_{\tau}(u_0)(s)}{g_0(s)}\}$. Here~$f'_{0}\in\LL^{2}(\Omega,\R^d)$ (resp.\ $h'_{0}\in~\LL^{2}(\Gamma_{\mathrm{N}})$) is the derivative at $t=0$ of the map $t\in\mathbb{R}_{+}\mapsto f_{t}\in \mathrm{L}^{2}(\Omega,\R^d)$ (resp.\ of the map $t\in\mathbb{R}_{+}\mapsto h_{t}\in \mathrm{L}^{2}(\Gamma_{\mathrm{N}})$) and~$g'_{0}\in\LL^{2}(\Gamma_{\mathrm{N}})$ is the map defined, for almost every~$s\in\Gamma_{\mathrm{N}}$, by~$g'_{0}(s):=\lim_{t \to 0^{+}}\frac{g_{t}(s)-g_{0}(s)}{t}$.

We emphasize that the boundary conditions which appear on $\Gamma_{\mathrm{N}^{u_0,g_0}_{\mathrm{S}}}$ are called the \textit{tangential Signorini's unilateral conditions}. They are close to the classical Signorini's unilateral conditions which describe a non-permeable contact (see, e.g.,~\cite{15SIG,16SIG}) except that, here, they are concerned with the tangential components (instead of the usual normal components). Roughly speaking our main result claims that the tangential Signorini's solution can be considered as first-order approximation to the perturbed Tresca's solution.

\paragraph{Application to an optimal control problem}
The above sensitivity analysis allows us to investigate the optimal control problem given by
\begin{equation*}
    \minn\limits_{ \substack{ z\in \mathcal{U}}} \; \mathcal{J}(z),
\end{equation*}
where $\mathcal{J}$ is the cost functional given by
\begin{equation*}
\fonction{\mathcal{J}}{\mathrm{V}}{\R}{z}{\mathcal{J}(z):=\frac{1}{2}\left\|u(\ell(z))\right\|^{2}_{\HH^{1}_{\mathrm{D}}(\Omega,\R^d)}+\frac{\beta}{2}\left\| \ell(z) \right\|^{2}_{\LL^{2}(\Gamma_{\mathrm{N}})},}
\end{equation*}
where~$\mathrm{V}$ is the open subset of~$\LL^{\infty}(\Gamma_{\mathrm{N}})$ defined by
$$
\mathrm{V}:=\left\{ z\in\LL^{\infty}(\Gamma_{\mathrm{N}}) \mid \exists C(z)>0\text{, } \ell(z)>C(z) \text{ \textit{\textit{a.e.}} on } \Gamma_{\mathrm{N}} \right\},
$$
where $\ell$ is the map defined by $z\in\LL^{\infty}(\Gamma_{\mathrm{N}})\mapsto \ell(z):=g_1+zg_2\in \LL^{\infty}(\Gamma_{\mathrm{N}})$, where~$g_1\in\LL^{\infty}(\Gamma_{\mathrm{N}})$ with~$g_1\geq m$ \textit{a.e.} on $\Gamma_{\mathrm{N}}$ for some positive constant~$m>0$ and $g_2\in\LL^{\infty}(\Gamma_{\mathrm{N}})$ such that~$||g_2||_{\LL^{\infty}(\Gamma_{\mathrm{N}})}>0$, and where $u(\ell(z))\in\HH^{1}_{\mathrm{D}}(\Omega,\R^d)$ stands for the unique solution to the Tresca friction problem given by
\begin{equation}\tag{CTP\ensuremath{_{\ell(z)}}}
\arraycolsep=2pt
\left\{
\begin{array}{rcll}
-\mathrm{div}(\mathrm{A}\mathrm{e}(u)) & = & f   & \text{ in } \Omega , \\
u & = & 0  & \text{ on } \Gamma_{\mathrm{D}} ,\\
\sigma_\nn(u) & = & h  & \text{ on } \Gamma_{\mathrm{N}},\\
\left\|\sigma_\tau(u)\right\|\leq \ell(z) \text{ and } u_{\tau}\cdot\sigma_{\tau}(u)+\ell(z)\left\|u_{\tau}\right\| & = & 0  & \text{ on } \Gamma_{\mathrm{N}},
\end{array}
\right.
\end{equation}
where $f\in \mathrm{L}^{2}(\Omega,\R^d)$ and $h\in \mathrm{L}^{2}(\Gamma_{\mathrm{N}})$, where~$\beta > 0$ is a positive constant and where $\mathcal{U}$ is a given nonempty convex subset of $\mathrm{V}$ such that $\mathcal{U}$ is a bounded closed subset of~$\LL^{2}(\Gamma_{\mathrm{N}})$. Note that the first term in the cost functional $\mathcal{J}$ corresponds to the compliance, while the second term is the energy consumption which is standard in optimal control problems (see, e.g.,~\cite{MANZA}).

We prove in Theorem~\ref{gradientdelafonccoutgg}
that the cost functional $\mathcal{J}$ is Gateaux differentiable on $\mathrm{V}$, and its Gateaux differential at any $z_{0}\in\mathrm{V}$, denoted by $\mathrm{d}_{G}\mathcal{J}(z_{0})$, is given by
$$
\mathrm{d}_{G}\mathcal{J}(z_{0})(z)=\int_{\Gamma_{\mathrm{N}^{u_0,\ell(z_0)}_{\mathrm{R}}}}zg_2\left(\beta\left( g_1+z_0 g_2\right)-\left\|u_{0_\tau}\right\|\right)+\int_{\Gamma_{\mathrm{N}^{u_0,\ell(z_0)}_{\mathrm{T}}}\cup\Gamma_{\mathrm{N}^{u_0,\ell(z_0)}_{\mathrm{S}}}}\beta zg_2\left(g_1+z_0g_2\right),
$$
for all $z\in\LL^{\infty}(\Gamma_{\mathrm{N}})$, where $u_0:=u(\ell(z_0))$ is the solution to the Tresca friction problem~(CTP$_{\ell(z_0)}$), and where $\Gamma_{\mathrm{N}}$ is decomposed (up to a null set) as $\Gamma_{\mathrm{N}^{u_0,\ell(z_0)}_{\mathrm{T}}}\cup\Gamma_{\mathrm{N}^{u_0,\ell(z_0)}_{\mathrm{R}}}\cup\Gamma_{\mathrm{N}^{u_0,\ell(z_0)}_{\mathrm{S}}}$.

The expression of the Gateaux differential of $\mathcal{J}$ allows us to exhibit an explicit descent direction of $\mathcal{J}$ (see Subsection~\ref{numericalsimjfkjfsdkjfsdkf} for details). Hence, using this descent direction together with a basic projected gradient algorithm, we perform numerical simulations to solve the optimal control problem on a two-dimensional example.

 \paragraph{Novelty and originality of the present paper}
We emphasize here that the previous works~\cite{4ABC,ABCJ,BCJDC} focused on scalar models, while the present work deals with the vectorial linear elasticity model, which constitutes an essential step in view of dealing with concrete cases and applications. Even if the present work is inspired by the previous papers, we want to underline that the extension to the vectorial context is not trivial and leads to several additional difficulties, especially in the investigation of the parameterized twice epi-differentiability of the parameterized Tresca friction functional which involves the tangential norm map $||\cdot_{\tau (s)}||$, for almost all $s\in\Gamma_{\mathrm{N}}$. In particular, we present in this paper a generalization (see Proposition~\ref{epitangnorm}) of a result proved by C. N. Do about the twice epi-differentiability of a support function (see~\cite[Example 2.7 p.286]{DO}) which is used next to prove that the tangential norm map is twice epi-differentiable (see Subsection~\ref{sectiontwiceepi} for details). Moreover, the methodology used in this paper allowed us to characterize the derivative of the solution to the Tresca friction problem as the solution to a non-standard boundary problem (a tangential Signorini problem), that does not appear in the literature yet and which constituted an additional difficulty. Finally we present a first application of these results to an optimal control problem, with the aim of illustrating the feasibility of the presented method.

\paragraph{Organization of the paper}
The paper is organized as follows.  Section~\ref{Mainresult1} is the core of the present work: in Subsection~\ref{BVP} we describe the functional framework and we introduce three boundary value problems that are involved all along the paper; in Subsection~\ref{section4} the sensitivity analysis of the Tresca friction problem is performed. In Section~\ref{section4cc}, we investigate an optimal control problem and numerical simulations are performed to solve it on a two-dimensional example. Finally Appendix~\ref{appendix} is dedicated to some basic recalls from convex, variational and functional analyses used throughout the paper.

\section{Main result}\label{Mainresult1}

In this section let $d\in\left\{2,3\right\}$ and $\Omega$ be a nonempty bounded connected open subset of $\R^{d}$ with a $\mathcal{C}^{1}$-boundary denoted by~$\Gamma:=\partial{\Omega}$ (see Remark~\ref{regularityofn} for comments on this~$\mathcal{C}^{1}$-regularity assumption). We denote by $\LL^{2}(\Omega,\R^d)$, $\mathrm{L}^{2}(\Gamma,\R^d)$, $\LL^{1}(\Gamma,\R^d)$, $\HH^{1}(\Omega,\R^d)$, $\HH^{1/2}(\Gamma,\R^d)$, $\HH^{-1/2}(\Gamma,\R^d)$ the usual Lebesgue and Sobolev spaces endowed with their standard norms. 
Moreover~the~no\-tation~$\MD(\Omega,\R^d)$ stands for the set of infinitely differentiable functions $\varphi  :  \Omega \rightarrow \R^{d}$ with compact support in $\Omega$, and~$\MD'(\Omega,\R^d)$ for the set of distributions on $\Omega$. Moreover, all along this paper, we denote by~$:$  the scalar product defined by~$\mathrm{B}:\mathrm{C}=\sum_{i=1}^{d}\mathrm{B}_i\cdot\mathrm{C}_i$ for all $\mathrm{B},\mathrm{C}\in\R^{d\times d}$, where $\mathrm{B}_i\in\R^d$ (resp. $\mathrm{C}_i\in\R^d$) is the~$i$-th line of~$\mathrm{B}$ (resp. $\mathrm{C}$) for all $i\in[[1,d]]$. 

Let us consider the decomposition
$$
\Gamma=:\Gamma_{\mathrm{D}}\cup\Gamma_{\mathrm{N}},
$$
where $\Gamma_{\mathrm{D}}$ and $\Gamma_{\mathrm{N}}$ are two measurable (with positive measure) pairwise disjoint subsets of $\Gamma$ such that almost every point of $\Gamma_{\mathrm{N}}$ belongs to $\mathrm{int}_{\Gamma}({\Gamma_{\mathrm{N}}})$ (see Remark~\ref{interpot} for comments on this last assumption). We introduce $\HH^{1}_{\mathrm{D}}(\Omega,\R^d)$ the linear subspace of $\HH^{1}(\Omega,\R^d)$ defined by
$$
\HH^{1}_{\mathrm{D}}(\Omega,\R^d):=\left\{w\in\HH^{1}(\Omega,\R^d)\mid w=0 \text{ \textit{a.e.} on } \Gamma_{\mathrm{D}} \right\}.
$$
Moreover, we assume that $\Omega$ is an elastic solid satisfying the linear elastic model (see, e.g.,~\cite{SALEN}), that is
$$
\sigma(w)=\mathrm{A}\mathrm{e}(w),
$$
where $\sigma$ is the Cauchy stress tensor, $\mathrm{A}$ the stiffness tensor, and $\mathrm{e}$ is the infinitesimal strain tensor defined by 
    \begin{equation*}
    \mathrm{e}(w):=\frac{1}{2}(\nabla{w}+\nabla{w}^{\top}),
    \end{equation*}
for all displacement field $w\in\HH^1(\Omega,\R^d)$. We also assume that all coefficients of $\mathrm{A}$ are measurable (denoted by $a_{ijkl}$ for all $\left(i,j,k,l\right)\in\left\{1,...,d\right\}^{4}$) and that there exist two constants $\alpha>0$ and $\gamma>0$ such that all coefficients of $\mathrm{A}$ and $e$ (denoted by $\epsilon_{ij}$ for all $\left(i,j\right)\in\left\{1,...,d\right\}^{2}$) satisfy
 \begin{equation*}
a_{ijkl}(x)=a_{jikl}(x)=a_{lkij}(x), \qquad |a_{ijkl}(x)|\leq \alpha,
\end{equation*}
and also
$$\displaystyle\sum_{i=1}^{d}\sum_{j=1}^{d}\sum_{k=1}^{d}\sum_{l=1}^{d}a_{ijkl}\epsilon_{ij}(w_1)(x)\epsilon_{kl}(w_2)(x)\geq\gamma\sum_{i=1}^{d}\sum_{j=1}^{d}\epsilon_{ij}(w_1)(x)\epsilon_{ij}(w_2)(x),
$$
for all displacement field $w_1,w_2\in\HH^1(\Omega,\R^d)$ and for almost all $x\in\Omega$. Moreover, since $\Gamma_{\mathrm{D}}$ has a positive measure, then we can deduce that 
\begin{equation*}
\fonction{\dual{\cdot}{\cdot}_{\HH^{1}_{\mathrm{D}}(\Omega,\R^{d})}}{\left(\HH^{1}_{\mathrm{D}}(\Omega,\R^{d})\right)^2}{\R}{(w_1,w_2)}{\displaystyle\int_{\Omega}\mathrm{A}\mathrm{e}(w_1):\mathrm{e}(w_2),}
\end{equation*}
is a scalar product on $\HH^{1}_{\mathrm{D}}(\Omega,\R^{d})$ (see, e.g.,~\cite[Chapter 3]{DUVAUTLIONS}) and we denote by $\left\|\cdot\right\|_{\HH^{1}_{\mathrm{D}}(\Omega,\R^{d})}$ the corresponding norm.

We denote by $\nn\in\mathcal{C}^0(\Gamma)$ the outward-pointing unit normal vector to $\Gamma$. Therefore, for any~$w\in\LL^{2}(\Gamma,\R^d)$, one has
$w=w_{\nn}\nn+w_\tau$,
where $w_{\nn}:=w\cdot\nn\in\LL^{2}(\Gamma,\R)$ 
and $w_{\tau}:=w-w_{\nn}\nn\in\LL^2(\Gamma,\R^d)$.
In particular, if the stress vector $\mathrm{A}\mathrm{e}(w)\nn$ is in $\LL^2(\Gamma_{\mathrm{N}},\R^d)$ for some
 $w\in\HH^1(\Omega,\R^d)$, then we use the notation 
$$
\mathrm{A}\mathrm{e}(w)\nn=\sigma_{\nn}(w)\nn+\sigma_\tau(w),
$$
where $\sigma_{\nn}(w)\in\LL^{2}(\Gamma_{\mathrm{N}},\R)$ is the normal stress and $\sigma_{\tau}(w)\in\LL^{2}(\Gamma_{\mathrm{N}},\R^d)$ is the shear stress. We also denote by $\left\|\cdot\right\|$ the Euclidean norm on $\R^{d}$ and, for all $w\in\LL^{2}(\Gamma,\R^d)$, $\left\|w_\tau\right\|\in\LL^{2}(\Gamma)$ is defined by
$$
\fonction{\left\|w_\tau\right\|}{\Gamma}{\R}{s}{\displaystyle \left\|w_{\tau}(s)\right\|.}
$$
The rest of this section is organized as follows. Subsection~\ref{BVP} introduces three boundary value problems involved all along the paper: a Dirichlet-Neumann problem (see Problem~\eqref{PbNeumannDirichlet}), a tangential Signorini problem (see Problem~\eqref{PbtangSignorini}) and a Tresca friction problem (see Problem~\ref{PbTresca}). In Subsection~\ref{section4}, the sensitivity analysis of the Tresca friction problem is performed and we establish the main result of this paper (see Theorem~\ref{caractu0derivDNT}).

\subsection{Three boundary value problems}\label{BVP}

For the needs of this subsection, let us fix $f\in \mathrm{L}^{2}(\Omega,\R^d)$. Only the proofs of Subsection~\ref{SectionSignorinicasscalairesansu} are detailed since the tangential Signorini problem is, to the best of our knowledge, new in the literature. For the proofs of the other problems, they are classical and close to the ones presented in~\cite{4ABC} and thus they are left to the reader.

\subsubsection{A problem with Dirichlet-Neumann conditions}
Let $z\in\LL^{2}(\Gamma_{\mathrm{N}},\R^d)$ and consider the Dirichlet-Neumann problem given by
\begin{equation}\tag{DN}\label{PbNeumannDirichlet}
\arraycolsep=2pt
\left\{
\begin{array}{rcll}
-\mathrm{div}(\mathrm{A}\mathrm{e}(F)) & = & f   & \text{ in } \Omega , \\
F & = & 0  & \text{ on } \Gamma_{\mathrm{D}} ,\\
\mathrm{A}\mathrm{e}(F)\nn & = & z  & \text{ on } \Gamma_{\mathrm{N}}.
\end{array}
\right.
\end{equation}

\begin{myDefn}[Strong solution to the Dirichlet-Neumann problem]
A (strong) solution to the Dirichlet-Neumann problem~\eqref{PbNeumannDirichlet} is a function $F\in\HH^{1}(\Omega,\R^d)$ such that $-\mathrm{div}(\mathrm{A}\mathrm{e}(F))=f$ in~$\MD'(\Omega,\R^d)$,~$F=0$ \textit{a.e.} on $\Gamma_{\mathrm{D}}$, $\mathrm{A}\mathrm{e}(F)\nn\in\LL^{2}(\Gamma_{\mathrm{N}},\R^d)$ with $\mathrm{A}\mathrm{e}(F)\nn=z$ \textit{a.e.} on $\Gamma_{\mathrm{N}}$.
\end{myDefn}

\begin{myDefn}[Weak solution to the Dirichlet-Neumann problem]
A weak solution to the Dirichlet-Neumann problem~\eqref{PbNeumannDirichlet} is a function $F\in\HH^{1}_{\mathrm{D}}(\Omega,\R^d)$ such that
\begin{equation*}
\displaystyle\int_{\Omega}\mathrm{A}\mathrm{e}(F):\mathrm{e}(w)=\int_{\Omega}f\cdot w+\int_{\Gamma_{\mathrm{N}}}z\cdot w, \qquad \forall w\in\HH^{1}_{\mathrm{D}}(\Omega,\R^d).
\end{equation*}
\end{myDefn}

\begin{myProp}\label{DNequiart}
A function $F\in\HH^{1}(\Omega,\R^d)$ is a (strong) solution to the Dirichlet-Neumann problem~\eqref{PbNeumannDirichlet} if and only if $F$ is a weak solution to the Dirichlet-Neumann problem~\eqref{PbNeumannDirichlet}.
\end{myProp}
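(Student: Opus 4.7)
The plan is to prove the two implications separately, exploiting the classical Green's formula for tensor fields with divergence in $\LL^{2}$.

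\textbf{Strong $\Rightarrow$ weak.} Suppose $F$ is a strong solution. From $F=0$ \textit{a.e.} on $\Gamma_{\mathrm{D}}$ I get $F\in\HH^{1}_{\mathrm{D}}(\Omega,\R^d)$ immediately. Since $\mathrm{div}(\mathrm{A}\mathrm{e}(F))=-f\in\LL^{2}(\Omega,\R^d)$, the symmetric tensor field $\mathrm{A}\mathrm{e}(F)$ belongs to $\HH(\mathrm{div};\Omega)$, so the generalized Green's formula applies: for every $w\in\HH^{1}_{\mathrm{D}}(\Omega,\R^d)$,
$$
\int_{\Omega}\mathrm{A}\mathrm{e}(F):\mathrm{e}(w)=-\int_{\Omega}\mathrm{div}(\mathrm{A}\mathrm{e}(F))\cdot w+\langle \mathrm{A}\mathrm{e}(F)\nn,w\rangle_{\Gamma},
$$
where the bracket denotes the $\HH^{-1/2}(\Gamma,\R^d)$--$\HH^{1/2}(\Gamma,\R^d)$ duality pairing. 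By assumption $\mathrm{A}\mathrm{e}(F)\nn\in\LL^{2}(\Gamma_{\mathrm{N}},\R^d)$ and equals $z$ there, while $w=0$ \textit{a.e.} on $\Gamma_{\mathrm{D}}$, so the boundary term collapses to $\int_{\Gamma_{\mathrm{N}}}z\cdot w$. Combined with $-\mathrm{div}(\mathrm{A}\mathrm{e}(F))=f$, the weak identity follows.

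\textbf{Weak $\Rightarrow$ strong.} Suppose now $F\in\HH^{1}_{\mathrm{D}}(\Omega,\R^d)$ solves the weak formulation. Choosing test functions $\varphi\in\MD(\Omega,\R^d)$ and recognizing that $\int_{\Omega}\mathrm{A}\mathrm{e}(F):\mathrm{e}(\varphi)=-\langle\mathrm{div}(\mathrm{A}\mathrm{e}(F)),\varphi\rangle_{\MD'(\Omega,\R^d),\MD(\Omega,\R^d)}$ yields $-\mathrm{div}(\mathrm{A}\mathrm{e}(F))=f$ in $\MD'(\Omega,\R^d)$. In particular $\mathrm{div}(\mathrm{A}\mathrm{e}(F))=-f\in\LL^{2}(\Omega,\R^d)$, so the normal trace $\mathrm{A}\mathrm{e}(F)\nn$ makes sense as an element of $\HH^{-1/2}(\Gamma,\R^d)$. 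Applying the same Green's formula as above to an arbitrary $w\in\HH^{1}_{\mathrm{D}}(\Omega,\R^d)$ and subtracting the weak identity, one obtains
$$
\langle\mathrm{A}\mathrm{e}(F)\nn,w\rangle_{\Gamma}=\int_{\Gamma_{\mathrm{N}}}z\cdot w,\qquad \forall w\in\HH^{1}_{\mathrm{D}}(\Omega,\R^d).
$$
The remaining task is to lift this distributional identity to an $\LL^{2}$-equality on $\Gamma_{\mathrm{N}}$. I would use the hypothesis that almost every point of $\Gamma_{\mathrm{N}}$ belongs to $\nt(\Gamma_{\mathrm{N}})$: thanks to the $\mathcal{C}^{1}$-regularity of $\Gamma$, traces on $\Gamma$ of functions in $\HH^{1}_{\mathrm{D}}(\Omega,\R^d)$ span a set which is dense in the space of $\HH^{1/2}(\Gamma,\R^d)$ functions supported in $\overline{\Gamma_{\mathrm{N}}}$, which in turn is dense in $\LL^{2}(\Gamma_{\mathrm{N}},\R^d)$. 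Since the right-hand side is a continuous linear functional on $\LL^{2}(\Gamma_{\mathrm{N}},\R^d)$, the left-hand side must extend to such a continuous functional and is represented by $z$; this identifies $\mathrm{A}\mathrm{e}(F)\nn$ with $z$ in $\LL^{2}(\Gamma_{\mathrm{N}},\R^d)$.

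\textbf{Main obstacle.} The only non-routine point is the last step: extracting from the $\HH^{-1/2}$-level identity the fact that $\mathrm{A}\mathrm{e}(F)\nn$ genuinely belongs to $\LL^{2}(\Gamma_{\mathrm{N}},\R^d)$ (not merely to $\HH^{-1/2}(\Gamma_{\mathrm{N}},\R^d)$) and coincides \textit{a.e.} with $z$. This is where the topological assumption on $\Gamma_{\mathrm{N}}$ plays its role, ensuring that test functions can be localized inside $\Gamma_{\mathrm{N}}$ and that one can approximate arbitrary $\LL^{2}$ boundary data without interference from $\Gamma_{\mathrm{D}}$. The computations in Steps~1 and~2 beyond this point are standard and can be left to the reader, as the authors indicate.
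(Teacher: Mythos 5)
Your proof is correct and follows exactly the route the paper itself uses for the analogous equivalence results (Proposition~\ref{EquiSignorini} and Lemma~\ref{lemmeAnnexeSousdiff}); the paper omits the proof of Proposition~\ref{DNequiart} as classical, so there is nothing to diverge from. One inaccuracy worth correcting: the final identification of $\mathrm{A}\mathrm{e}(F)\nn$ with $z$ in $\LL^{2}(\Gamma_{\mathrm{N}},\R^d)$ does \emph{not} rely on the assumption that almost every point of $\Gamma_{\mathrm{N}}$ lies in $\mathrm{int}_{\Gamma}(\Gamma_{\mathrm{N}})$ --- as Remark~\ref{interpot} makes explicit, that hypothesis is only needed to recover the pointwise nonlinear (Tresca or Signorini) conditions via Lebesgue-point arguments. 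What you actually need, and what your argument implicitly uses, is that the duality pairing restricted to $\HH^{1/2}_{00}(\Gamma_{\mathrm{N}},\R^d)$ is bounded by $\|z\|_{\LL^{2}(\Gamma_{\mathrm{N}},\R^d)}\|w\|_{\LL^{2}(\Gamma_{\mathrm{N}},\R^d)}$, so that Proposition~\ref{Ident} identifies $\mathrm{A}\mathrm{e}(F)\nn$ with an element of $\LL^{2}(\Gamma_{\mathrm{N}},\R^d)$, and then the density of $\HH^{1/2}_{00}(\Gamma_{\mathrm{N}},\R^d)$ in $\LL^{2}(\Gamma_{\mathrm{N}},\R^d)$ (Proposition~\ref{injections}, valid for any measurable decomposition) forces the equality almost everywhere.
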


Using the Riesz representation theorem, we obtain the following existence/uniqueness result.

\begin{myProp}\label{existenceunicitéDN}
The Dirichlet-Neumann problem~\eqref{PbNeumannDirichlet} admits a unique (strong) solution~$F\\\in\HH^{1}_{\mathrm{D}}(\Omega,\R^d)$. Moreover there exists a constant $C \geq 0$ (depending only on $\Omega$) such that
$$
\left \| F \right \|_{\HH^{1}_{\mathrm{D}}(\Omega,\R^d)}\leq C\left(\left \| f  \right \|_{\LL^{2}(\Omega,\R^d)} + \left \| z \right \|_{\LL^{2}(\Gamma_{\mathrm{N}},\R^d)}\right).
$$
\end{myProp}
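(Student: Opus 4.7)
The plan is to apply Proposition~\ref{DNequiart} so as to reduce the problem to the existence, uniqueness, and continuous dependence of a \emph{weak} solution, and then invoke the Riesz representation theorem on the Hilbert space $(\HH^{1}_{\mathrm{D}}(\Omega,\R^d),\langle \cdot , \cdot \rangle_{\HH^{1}_{\mathrm{D}}(\Omega,\R^d)})$. Indeed, the weak formulation reads
\begin{equation*}
\langle F , w \rangle_{\HH^{1}_{\mathrm{D}}(\Omega,\R^d)} = L(w) := \int_{\Omega} f\cdot w + \int_{\Gamma_{\mathrm{N}}} z\cdot w, \qquad \forall w\in\HH^{1}_{\mathrm{D}}(\Omega,\R^d),
\end{equation*}
so everything reduces to showing that the right-hand side defines a continuous linear form on $\HH^{1}_{\mathrm{D}}(\Omega,\R^d)$.

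First I would check that $L$ is well-defined and linear (immediate) and that it is continuous. For the first term, Cauchy--Schwarz gives $|\int_\Omega f\cdot w| \leq \|f\|_{\LL^2(\Omega,\R^d)} \|w\|_{\LL^2(\Omega,\R^d)}$. For the second term, Cauchy--Schwarz combined with the continuity of the trace operator $\HH^1(\Omega,\R^d)\to\LL^2(\Gamma,\R^d)$ yields $|\int_{\Gamma_{\mathrm{N}}} z\cdot w| \leq \|z\|_{\LL^2(\Gamma_{\mathrm{N}},\R^d)} \|w\|_{\LL^2(\Gamma_{\mathrm{N}},\R^d)} \leq C_{\mathrm{tr}}\|z\|_{\LL^2(\Gamma_{\mathrm{N}},\R^d)} \|w\|_{\HH^1(\Omega,\R^d)}$. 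The standard embedding $\HH^1(\Omega,\R^d)\hookrightarrow\LL^2(\Omega,\R^d)$ together with the equivalence between $\|\cdot\|_{\HH^{1}_{\mathrm{D}}(\Omega,\R^d)}$ and the restriction of $\|\cdot\|_{\HH^1(\Omega,\R^d)}$ to $\HH^{1}_{\mathrm{D}}(\Omega,\R^d)$ (already recalled just before the statement, and which rests on Korn's inequality together with a Poincar\'e--Friedrichs inequality available because $\Gamma_{\mathrm{D}}$ has positive measure) then yields a constant $C\geq 0$, depending only on $\Omega$, such that
\begin{equation*}
|L(w)| \leq C\bigl(\|f\|_{\LL^2(\Omega,\R^d)} + \|z\|_{\LL^2(\Gamma_{\mathrm{N}},\R^d)}\bigr)\|w\|_{\HH^{1}_{\mathrm{D}}(\Omega,\R^d)}, \qquad \forall w \in \HH^{1}_{\mathrm{D}}(\Omega,\R^d).
\end{equation*}

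Applying the Riesz representation theorem, there is a unique $F\in\HH^{1}_{\mathrm{D}}(\Omega,\R^d)$ such that $\langle F,w\rangle_{\HH^{1}_{\mathrm{D}}(\Omega,\R^d)}=L(w)$ for every $w\in\HH^{1}_{\mathrm{D}}(\Omega,\R^d)$; equivalently, $F$ is the unique weak solution of~\eqref{PbNeumannDirichlet}. Taking $w=F$ in the weak formulation (or equivalently using $\|F\|_{\HH^{1}_{\mathrm{D}}(\Omega,\R^d)}=\|L\|_{(\HH^{1}_{\mathrm{D}}(\Omega,\R^d))'}$) immediately yields the claimed a priori estimate. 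Finally, Proposition~\ref{DNequiart} promotes $F$ to a strong solution, and conversely guarantees that any strong solution is also a weak solution, so uniqueness of strong solutions follows. The only non-routine ingredient is the equivalence of the norms on $\HH^{1}_{\mathrm{D}}(\Omega,\R^d)$, which is standard here and already cited in the text, so I do not anticipate any real obstacle.
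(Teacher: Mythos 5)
Your proposal is correct and follows exactly the route the paper intends: the paper itself merely states ``Using the Riesz representation theorem, we obtain the following existence/uniqueness result'' and leaves the details to the reader, and your argument (continuity of the linear form via the trace theorem and the norm equivalence on $\HH^{1}_{\mathrm{D}}(\Omega,\R^d)$, Riesz representation for existence/uniqueness and the a priori bound, then Proposition~\ref{DNequiart} to pass between weak and strong solutions) is precisely that standard argument, carried out correctly.
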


\subsubsection{A tangential Signorini problem}\label{SectionSignorinicasscalairesansu}

In this part we assume that $\Gamma_{\mathrm{N}}$ is decomposed (up to a null set) as
$$
\Gamma_{\mathrm{N}}=:\Gamma_{\mathrm{N_T}}\cup\Gamma_{\mathrm{N_R}}\cup\Gamma_{\mathrm{N_S}},
$$
where $\Gamma_{\mathrm{N_T}}$, $\Gamma_{\mathrm{N_R}}$, $\Gamma_{\mathrm{N_S}}$ are three measurable pairwise disjoint subsets of $\Gamma_{\mathrm{N}}$. Moreover let~$h\in \mathrm{L}^{2}(\Gamma_{\mathrm{N}})$,~$\ell\in\LL^{2}(\Gamma_{\mathrm{N}})$, $v\in\LL^{\infty}(\Gamma_{\mathrm{N}},\R^d)$ such that $||v||_{\LL^{\infty}(\Gamma_{\mathrm{N_R}}\cup\Gamma_{\mathrm{N_S}},\R^d)}\leq1$,~$k\in\LL^{4}(\Gamma_{\mathrm{N_R}})$ such that $k>0$ \textit{a.e.} on $\Gamma_{\mathrm{N_R}}$, and we denote, for almost all $s\in\Gamma_{\mathrm{N_S}}$, $\R_{-}v_{\tau}(s):=\{ y\in\R^d \mid \exists  \nu \leq0 \text{ such that } y=\nu v_{\tau}(s)\}$. The tangential Signorini problem is given by

\begin{equation}\tag{SP}\label{PbtangSignorini}
\arraycolsep=2pt
\left\{
\begin{array}{rcll}
-\mathrm{div}(\mathrm{A}\mathrm{e}(u)) & = & f   & \text{ in } \Omega , \\
u & = & 0  & \text{ on } \Gamma_{\mathrm{D}} ,\\
\sigma_{\nn}(u) & = & h  & \text{ on } \Gamma_{\mathrm{N}} ,\\
u_{\tau} & = & 0  & \text{ on } \Gamma_{\mathrm{N_T}},\\
\sigma_{\tau}(u)+k\left(u_{\tau}-\left(u_{\tau}\cdot v_{\tau}\right)v_{\tau}\right) & = & \ell v_{\tau}  & \text{ on } \Gamma_{\mathrm{N_R}} ,\\
u_{\tau}\in\R_{-}v_{\tau}, \left(\sigma_{\tau}(u)-\ell v_{\tau}\right)\cdot v_{\tau}\leq0 \text{ and } u_{\tau}\cdot\left(\sigma_{\tau}(u)-\ell v_{\tau}\right)  & = & 0  & \text{ on } \Gamma_{\mathrm{N_S}}.
\end{array}
\right.
\end{equation}

\begin{myDefn}[Strong solution to the tangential Signorini problem]
A (strong) solution to the tangential Signorini problem~\eqref{PbtangSignorini} is a function $u\in\HH^{1}(\Omega,\R^d)$ such that $-\mathrm{div}(\mathrm{A}\mathrm{e}(u))=f$ in $\MD'(\Omega,\R^d)$,~$u=0$ \textit{a.e.} on $\Gamma_{\mathrm{D}}$,~$u_{\tau}=0$ \textit{a.e.} on $\Gamma_{\mathrm{N_T}}$, $\mathrm{A}\mathrm{e}(u)\nn\in\LL^{2}(\Gamma_{\mathrm{N}},\R^d)$ with $\sigma_{\nn}(u)=h$ \textit{a.e.} on $\Gamma_{\mathrm{N}}$, $\sigma_{\tau}(u)+k\left(u_{\tau}-(u_{\tau}\cdot v_{\tau})v_{\tau}\right)=\ell v_{\tau}$ \textit{a.e.} on $\Gamma_{\mathrm{N_R}}$, $u_{\tau}\in\R_{-}v_{\tau}, \left(\sigma_{\tau}(u)-\ell v_{\tau}\right)\cdot v_{\tau}\leq0 \text{ and } u_{\tau}\cdot\left(\sigma_{\tau}(u)-\ell v_{\tau}\right)=0$ \textit{a.e.} on $\Gamma_{\mathrm{N_S}}$.
\end{myDefn}

\begin{myDefn}[Weak solution to the tangential Signorini problem]\label{weakformuletangentSigno}
A weak solution to the tangential Signorini problem~\eqref{PbtangSignorini} is a function $u\in\mathcal{K}^{1}(\Omega,\R^d)$ such that
\begin{multline}
\label{FaibleSignorini}
\displaystyle\int_{\Omega}\mathrm{A}\mathrm{e}(u):\mathrm{e}(w-u)\geq\int_{\Omega}f\cdot (w-u)+\int_{\Gamma_{\mathrm{N}}}h(w_{\nn}-u_\nn)+\int_{\Gamma_{\mathrm{N_R}}}\left(\ell v_{\tau}-k\left(u_{\tau}-(u_{\tau}\cdot v_{\tau})v_{\tau}\right)\right)\cdot(w_\tau-u_\tau)\\+\int_{\Gamma_{\mathrm{N_S}}}\ell v_{\tau}\cdot(w_\tau-u_\tau), \qquad \forall w\in\mathcal{K}^{1}(\Omega,\R^d),
\end{multline}
where $\mathcal{K}^{1}(\Omega,\R^d)$ is the nonempty closed convex subset of $\HH^{1}_{\mathrm{D}}(\Omega,\R^d)$ given by
$$
\mathcal{K}^{1}(\Omega,\R^d) := \left\{w\in\HH^{1}_{\mathrm{D}}(\Omega,\R^d) \mid w_\tau=0 \text{ \textit{a.e.} on } \Gamma_{\mathrm{N_T}}\text{ and } w_\tau\in\R_{-}v_{\tau} \text{ \textit{a.e.} on }\Gamma_{\mathrm{N_S}} \right \}.
$$
\end{myDefn}

One can easily prove that a (strong) solution to the tangential Signorini problem~\eqref{PbtangSignorini} is also a weak solution. However, to the best of our knowledge, without additional assumptions one cannot prove the converse. To get the equivalence, one can assume, in particular, that the decomposition~$\Gamma_{\mathrm{D}}\cup\Gamma_{\mathrm{N_T}}\cup\Gamma_{\mathrm{N_R}}\cup\Gamma_{\mathrm{N_S}}$ of $\Gamma$ is \textit{consistent} in the following sense.
\begin{myDefn}[Consistent decomposition]\label{regulieresens2}
 The decomposition $\Gamma_{\mathrm{D}}\cup\Gamma_{\mathrm{N_T}}\cup\Gamma_{\mathrm{N_R}}\cup\Gamma_{\mathrm{N_S}}$ of $\Gamma$ is said to be \emph{consistent} if:
 \begin{enumerate}[label={\rm (\roman*)}]
     \item for almost all $s\in\Gamma_{\mathrm{N_S}}$,  $s\in \mathrm{int}_{\Gamma}(\Gamma_{\mathrm{N_S}})$;
     \item the nonempty closed convex subset $\mathcal{K}^{1/2}(\Gamma,\R^d)$ of $\HH^{1/2}(\Gamma,\R^d)$ defined by
    \begin{multline*}
            \mathcal{K}^{1/2}(\Gamma,\R^d):=\biggl \{ w\in \HH^{1/2}(\Gamma,\R^d) \mid w=0 \text{ \textit{a.e.} on } \Gamma_{\mathrm{D}}\text{, } w_\tau=0 \text{ \textit{a.e.} on }\Gamma_{\mathrm{N_T}} \\ \text{ and } w_\tau\in\R_{-}v_{\tau} \text{ \textit{a.e.} on }\Gamma_{\mathrm{N_S}} \biggl\},
    \end{multline*}        
is dense in the nonempty closed convex subset $\mathcal{K}^{0}(\Gamma,\R^d)$ of $\mathrm{L}^{2}(\Gamma,\R^d)$ given by
    \begin{multline*}
         \mathcal{K}^{0}(\Gamma,\R^d):=\biggl \{ w\in \mathrm{L}^{2}(\Gamma,\R^d) \mid w=0 \text{ \textit{a.e.} on } \Gamma_{\mathrm{D}}\text{, } w_\tau=0 \text{ \textit{a.e.} on }\Gamma_{\mathrm{N_T}} \\ \text{ and } w_\tau\in\R_{-}v_{\tau} \text{ \textit{a.e.} on }\Gamma_{\mathrm{N_S}} \biggl \}.
    \end{multline*}
\end{enumerate}
\end{myDefn}

\begin{myProp}\label{EquiSignorini}
Let $u\in \HH^{1}(\Omega,\R^d)$.
\begin{enumerate}[label={\rm (\roman*)}]
    \item If $u$ is a (strong) solution to the tangential Signorini problem~\eqref{PbtangSignorini}, then $u$ is a weak solution to the tangential Signorini problem~\eqref{PbtangSignorini}.
    \item If $u$ is a weak solution to the Signorini problem~\eqref{PbtangSignorini} such that $\mathrm{A}\mathrm{e}(u)\nn\in\LL^{2}(\Gamma_{\mathrm{N}},\R^d)$ and the decomposition $\Gamma_{\mathrm{D}}\cup\Gamma_{\mathrm{N_T}}\cup\Gamma_{\mathrm{N_R}}\cup\Gamma_{\mathrm{N_S}}$ of $\Gamma$ is consistent, then $u$ is a (strong) solution to the tangential Signorini problem~\eqref{PbtangSignorini}.
\end{enumerate}
\end{myProp}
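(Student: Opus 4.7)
For (i), the plan is standard: multiply $-\mathrm{div}(\mathrm{A}\mathrm{e}(u))=f$ by $w-u$ for an arbitrary $w\in\mathcal{K}^{1}(\Omega,\R^d)$ and apply Green's formula for the elasticity operator. Because $w,u\in\HH^{1}_{\mathrm{D}}(\Omega,\R^d)$ the Dirichlet trace contribution vanishes, giving
\begin{equation*}
\int_{\Omega}\mathrm{A}\mathrm{e}(u):\mathrm{e}(w-u)=\int_{\Omega}f\cdot(w-u)+\int_{\Gamma_{\mathrm{N}}}\sigma_{\nn}(u)(w_{\nn}-u_{\nn})+\int_{\Gamma_{\mathrm{N}}}\sigma_{\tau}(u)\cdot(w_{\tau}-u_{\tau}).
\end{equation*}
Inserting $\sigma_{\nn}(u)=h$, substituting the strong equality that defines $\sigma_{\tau}(u)$ on $\Gamma_{\mathrm{N_R}}$, and noting that $w_{\tau}=u_{\tau}=0$ on $\Gamma_{\mathrm{N_T}}$, the only nontrivial point is the contribution over $\Gamma_{\mathrm{N_S}}$. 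There the pointwise identities $u_{\tau}=a v_{\tau}$ and $w_{\tau}=\lambda v_{\tau}$ hold with $a,\lambda\leq 0$, and the complementarity $u_{\tau}\cdot(\sigma_{\tau}(u)-\ell v_{\tau})=0$ combined with $(\sigma_{\tau}(u)-\ell v_{\tau})\cdot v_{\tau}\leq 0$ force $(\sigma_{\tau}(u)-\ell v_{\tau})\cdot(w_{\tau}-u_{\tau})=\lambda(\sigma_{\tau}(u)-\ell v_{\tau})\cdot v_{\tau}\geq 0$ almost everywhere. Collecting all pieces reproduces~\eqref{FaibleSignorini}.

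For (ii), I would reverse this chain. Testing with $w=u\pm\varphi$ for $\varphi\in\MD(\Omega,\R^d)$ (whose zero boundary trace ensures $w\in\mathcal{K}^{1}(\Omega,\R^d)$) converts the variational inequality into an equality and produces $-\mathrm{div}(\mathrm{A}\mathrm{e}(u))=f$ in $\MD'(\Omega,\R^d)$. The conditions $u=0$ on $\Gamma_{\mathrm{D}}$, $u_{\tau}=0$ on $\Gamma_{\mathrm{N_T}}$, and $u_{\tau}\in\R_{-}v_{\tau}$ on $\Gamma_{\mathrm{N_S}}$ are already built into $u\in\mathcal{K}^{1}(\Omega,\R^d)$. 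Given the additional assumption $\mathrm{A}\mathrm{e}(u)\nn\in\LL^{2}(\Gamma_{\mathrm{N}},\R^d)$, Green's formula now is legitimate and, after rearrangement, the weak inequality becomes the single boundary condition
\begin{equation*}
\int_{\Gamma_{\mathrm{N}}}\bigl[\sigma_{\nn}(u)-h\bigr](w_{\nn}-u_{\nn})+\int_{\Gamma_{\mathrm{N_R}}}\bigl[\sigma_{\tau}(u)-\ell v_{\tau}+k(u_{\tau}-(u_{\tau}\cdot v_{\tau})v_{\tau})\bigr]\cdot(w_{\tau}-u_{\tau})+\int_{\Gamma_{\mathrm{N_S}}}\bigl[\sigma_{\tau}(u)-\ell v_{\tau}\bigr]\cdot(w_{\tau}-u_{\tau})\geq 0,
\end{equation*}
to hold for every $w\in\mathcal{K}^{1}(\Omega,\R^d)$.

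At this point the consistency assumption enters: density of $\mathcal{K}^{1/2}(\Gamma,\R^d)$ in $\mathcal{K}^{0}(\Gamma,\R^d)$ lets me approximate any admissible $\LL^{2}$-boundary field by the trace of a function in $\mathcal{K}^{1}(\Omega,\R^d)$. Since $w_{\nn}$ is unconstrained and a localization over measurable subsets of $\Gamma_{\mathrm{N}}$ is available, I deduce $\sigma_{\nn}(u)=h$ a.e.\ on $\Gamma_{\mathrm{N}}$; on $\Gamma_{\mathrm{N_R}}$ the tangential perturbation is equally free, which turns the bracketed expression into the desired pointwise equality. The hardest step — and the main technical obstacle — concerns $\Gamma_{\mathrm{N_S}}$: to extract the unilateral conditions $(\sigma_{\tau}(u)-\ell v_{\tau})\cdot v_{\tau}\leq 0$ and $u_{\tau}\cdot(\sigma_{\tau}(u)-\ell v_{\tau})=0$, I would successively test with admissible perturbations $w_{\tau}=0$, $w_{\tau}=2u_{\tau}$, and $w_{\tau}=u_{\tau}-\varepsilon v_{\tau}$ (each lying in $\R_{-}v_{\tau}$ since $u_{\tau}$ does), localized on arbitrary measurable subsets of $\Gamma_{\mathrm{N_S}}$ via the interior condition (i) of consistency. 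The combination of the first two perturbations yields the complementarity, the third yields the sign condition, and the one-sided nature of these test-functions is precisely what replaces the usual "arbitrary-sign" device available for two-sided equalities.
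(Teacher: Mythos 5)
Your proposal is correct and follows essentially the same route as the paper: Green's (divergence) formula plus the pointwise sign and complementarity relations on $\Gamma_{\mathrm{N_S}}$ for (i), and for (ii) the distributional identification of the PDE, the consistency-based density step from traces of $\mathcal{K}^{1}(\Omega,\R^d)$ down to $\mathcal{K}^{0}(\Gamma,\R^d)$, and then localized admissible test fields to recover each boundary condition. The only cosmetic difference is on $\Gamma_{\mathrm{N_S}}$, where the paper derives the sign condition via Lebesgue points and shrinking balls (which is where the interior-point clause of consistency actually enters, not your measurable-subset localization, which does not need it), and where, for the complementarity, one should add that the integrand $u_{\tau}\cdot\left(\sigma_{\tau}(u)-\ell v_{\tau}\right)$ already has a fixed sign a.e.\ so that the vanishing integral obtained from $w=0$ and $w=2u$ forces it to vanish pointwise.
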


\begin{proof}
  Assume that $u$ is a (strong) solution to the tangential Signorini problem~\eqref{PbtangSignorini}. Then, from the boundary conditions, $u\in \mathcal{K}^{1}(\Omega,\R^d)$. Moreover, since $-\mathrm{div}(\mathrm{A}\mathrm{e}(u))=f$ in~$\MD'(\Omega,\R^d)$ and  $f\in\LL^{2}(\Omega,\R^d)$, then $-\mathrm{div}(\mathrm{A}\mathrm{e}(u))=f$ in $\LL^{2}(\Omega,\R^d)$. Hence, from divergence formula (see Proposition~\ref{div}), one gets
$$
\displaystyle \int_{\Omega}\mathrm{A}\mathrm{e}(u):\mathrm{e}(w-u)-\dual{\mathrm{A}\mathrm{e}(u)\nn}{w-u}_{\HH^{-1/2}(\Gamma,\R^d)\times \HH^{1/2}(\Gamma,\R^d)}=\int_{\Omega}f\cdot(w-u),
$$
for all $w\in \mathcal{K}^{1}(\Omega,\R^d)$. Moreover, for all $w\in \mathcal{K}^{1}(\Omega,\R^d)$, $w\in\HH^{1/2}_{00}(\Gamma_{\mathrm{N}},\R^d)$ which can be identified to a linear subspace of $\HH^{1/2}(\Gamma,\R^d)$, hence
$$
\displaystyle \displaystyle \int_{\Omega}\mathrm{A}\mathrm{e}(u):\mathrm{e}(w-u)-\dual{\mathrm{A}\mathrm{e}(u)\nn}{w-u}_{\HH^{-1/2}_{00}(\Gamma_{\mathrm{N}},\R^d)\times \HH^{1/2}_{00}(\Gamma_{\mathrm{N}},\R^d)}=\int_{\Omega}f\cdot(w-u), 
$$
for all $w\in \mathcal{K}^{1}(\Omega,\R^d)$. Furthermore, since $\mathrm{A}\mathrm{e}(u)\nn\in\LL^{2}(\Gamma_{\mathrm{N}},\R^d)$, it follows that
$$
\dual{\mathrm{A}\mathrm{e}(u)\nn}{w-u}_{\HH^{-1/2}_{00}(\Gamma_{\mathrm{N}},\R^d)\times \HH^{1/2}_{00}(\Gamma_{\mathrm{N}},\R^d)}=\int_{\Gamma_{\mathrm{N}}}\mathrm{A}\mathrm{e}(u)\nn\cdot (w-u),
$$
for all $w\in \mathcal{K}^{1}(\Omega,\R^d)$.
Using the decomposition of $\mathrm{A}\mathrm{e}(u)\nn$ on its tangential and normal components, one has
$$
\int_{\Gamma_{\mathrm{N}}}\mathrm{A}\mathrm{e}(u)\nn\cdot (w-u)=\int_{\Gamma_{\mathrm{N}}}\sigma_\nn(u)(w_\nn-u_\nn)+\int_{\Gamma_{\mathrm{N_R}}\cup\Gamma_{\mathrm{N_S}}}\sigma_\tau(u)\cdot (w_\tau-u_\tau),
$$
for all $w\in \mathcal{K}^{1}(\Omega,\R^d)$. From the boundary conditions, one has $\sigma_{\nn}(u)=h$ \textit{a.e.} on $\Gamma_{\mathrm{N}}$ and~$\sigma_{\tau}(u)=\ell v_{\tau}- k\left(u_{\tau}-(u_{\tau}\cdot v_{\tau})v_{\tau}\right)$ \textit{a.e.} on $\Gamma_{\mathrm{N_R}}$.
Moreover one has
$$
\displaystyle\sigma_\tau(u)\cdot\left(w_\tau-u_\tau\right)=\sigma_\tau(u)\cdot w_\tau - \sigma_\tau(u)\cdot u_\tau \geq \ell v_\tau\cdot w_\tau-\ell v_\tau\cdot u_\tau=\ell v_\tau\cdot\left(w_\tau-u_\tau\right),
$$
\textit{a.e.} on $\Gamma_{\mathrm{N_S}}$. This concludes
the proof of the first item.

{\rm (ii)} Assume that $u$ is a weak solution to the tangential Signorini problem~\eqref{PbtangSignorini}. Then $u\in\mathcal{K}^{1}(\Omega,\R^d)$. For all $\varphi\in\MD(\Omega,\R^d)$, considering $w:=u\pm\varphi\in\mathcal{K}^{1}(\Omega,\R^d)$ in Inequality~\eqref{FaibleSignorini}, one gets~$-\mathrm{div}(\mathrm{A}\mathrm{e}(u))=f$ in~$\MD'(\Omega,\R^d)$, then also in $\LL^{2}(\Omega,\R^d)$ since $f\in\LL^{2}(\Omega,\R^d)$. Hence we can apply the divergence formula (see Proposition~\ref{div}) in Inequality~\eqref{FaibleSignorini} to get that
\begin{multline*}
\displaystyle\dual{\mathrm{A}\mathrm{e}(u)\nn}{w-u}_{\HH^{-1/2}(\Gamma,\R^d)\times \HH^{1/2}(\Gamma,\R^d)}\geq \int_{\Gamma_{\mathrm{N}}}h(w_{\nn}-u_\nn)\\+\int_{\Gamma_{\mathrm{N_R}}}\left(\ell v_{\tau}-k\left(u_{\tau}-(u_{\tau}\cdot v_{\tau})v_{\tau}\right)\right)\cdot(w_\tau-u_\tau)+\int_{\Gamma_{\mathrm{N_S}}}\ell v_{\tau}\cdot(w_\tau-u_\tau),
\end{multline*}
for all $w\in\mathcal{K}^{1}(\Omega,\R^d)$.
Moreover, similarly to ${\rm (i)}$ and from the assumption $\mathrm{A}\mathrm{e}(u)\nn\in\LL^{2}(\Gamma_{\mathrm{N}},\R^d)$, one gets
\begin{multline}\label{inegaliteDNS}
\displaystyle\int_{\Gamma_{\mathrm{N}}}\sigma_{\nn}(u)(w_\nn-u_\nn)+\int_{\Gamma_{\mathrm{N_R}}\cup\Gamma_{\mathrm{N_S}}}\sigma_{\tau}(u)\cdot(w_\tau-u_\tau)\geq \int_{\Gamma_{\mathrm{N}}}h(w_{\nn}-u_\nn)\\+\int_{\Gamma_{\mathrm{N_R}}}\left(\ell v_{\tau}-k\left(u_{\tau}-(u_{\tau}\cdot v_{\tau})v_{\tau}\right)\right)\cdot(w_\tau-u_\tau)+\int_{\Gamma_{\mathrm{N_S}}}\ell v_{\tau}\cdot(w_\tau-u_\tau),
\end{multline}
for all $w\in\mathcal{K}^{1}(\Omega,\R^d)$, then also for all $w\in\mathcal{K}^{1/2}(\Gamma,\R^d)$. From the assumption that the decomposition $\Gamma_{\mathrm{D}}\cup\Gamma_{\mathrm{N_T}}\cup\Gamma_{\mathrm{N_R}}\cup\Gamma_{\mathrm{N_S}}$ of $\Gamma$ is consistent, $\mathcal{K}^{1/2}(\Gamma,\R^d)$ is dense in $\mathcal{K}^{0}(\Gamma,\R^d)$. Therefore, since~$k\in\LL^{4}(\Gamma_{\mathrm{N_R}})$, $||v||_{\LL^{\infty}(\Gamma_{\mathrm{N_R}}\cup\Gamma_{\mathrm{N_S}},\R^d)}\leq1$ and from the continuous embedding 
$\HH^{1}(\Omega,\R^d){\hookrightarrow} \LL^{4}(\Gamma,\R^d)$, we deduce that Inequality~\eqref{inegaliteDNS} is still  true for all $w\in\mathcal{K}^{0}(\Gamma,\R^d)$. 

By considering the function $w:=u\pm \psi\nn\in\mathcal{K}^{0}(\Gamma,\R^d)$ in Inequality~\eqref{inegaliteDNS}, where  $\psi\in\LL^{2}(\Gamma)$ is given by
$$
\psi=\left\{
\begin{array}{rcll}
0	&     & \text{ on } \Gamma_{\mathrm{D}} , \\
\phi	&   & \text{ on } \Gamma_{\mathrm{N}},
\end{array}
\right.
$$
with $\phi$ any function in $\LL^{2}(\Gamma_{\mathrm{N}})$, one deduces that~$\sigma_{\nn}(u)=h$ \textit{a.e.} on $\Gamma_{\mathrm{N}}$.

By considering $w:=u\pm w_\phi\in\mathcal{K}^{0}(\Gamma,\R^d)$ in Inequality~\eqref{inegaliteDNS},
where  $w_\phi\in\mathrm{L}^{2}(\Gamma,\R^d)$ is given by
$$
w_\phi=\left\{
\begin{array}{rcll}
0	&     & \text{ on } \Gamma_{\mathrm{D}}\cup\Gamma_{\mathrm{N_T}}\cup\Gamma_{\mathrm{N_S}} , \\
\phi	&   & \text{ on } \Gamma_{\mathrm{N_R}},
\end{array}
\right.
$$
with $\phi$ any function in $\LL^{2}(\Gamma_{\mathrm{N_R}},\R^d)$, one gets that $\sigma_{\tau}(u)=\ell v_{\tau}-k\left(u_{\tau}-(u_{\tau}\cdot v_{\tau})v_{\tau}\right)$ \textit{a.e.} on~$\Gamma_{\mathrm{N_R}}$.
Hence Inequality~\eqref{inegaliteDNS} becomes
\begin{equation}\label{inegaliteDNS2}
\int_{\Gamma_{\mathrm{N_S}}}\sigma_{\tau}(u)\cdot(w_\tau-u_\tau)\geq\int_{\Gamma_{\mathrm{N_S}}}\ell v_{\tau}\cdot(w_\tau-u_\tau),
\end{equation}
for all $w\in\mathcal{K}^{0}(\Gamma,\R^d)$. 
Let $s\in\Gamma_{\mathrm{N_S}}$ be a Lebesgue point of $\sigma_{\tau}(u)\cdot v_\tau\in\LL^{2}(\Gamma_{\mathrm{N_R}}\cup\Gamma_{\mathrm{N_S}})$ and of~$\ell\left\|v_\tau\right\|_{2}^{2}\in\LL^{2}(\Gamma_{\mathrm{N_R}}\cup\Gamma_{\mathrm{N_S}})$, such that $s\in\mathrm{int}_{\Gamma}(\Gamma_{\mathrm{N_S}})$. By considering the function $w:=u-\psi v_\tau\in\mathcal{K}^{0}(\Gamma,\R^d)$ in Inequality~\eqref{inegaliteDNS2}, where $\psi\in\LL^{2}(\Gamma)$ is defined by
$$
\psi:=
\left\{
\begin{array}{rl}
1   & \text{ on } B_{\Gamma}(s,\varepsilon) , \\
0  & \text{ on } \Gamma\textbackslash B_{\Gamma}(s,\varepsilon) ,
\end{array}
\right.
$$
for $\varepsilon>0$ such that $B_{\Gamma}(s,\varepsilon)\subset\Gamma_{\mathrm{N_S}}$, one gets that
$$
    \displaystyle \frac{1}{\left|B_{\Gamma}(s,\varepsilon)\right|}\int_{B_{\Gamma}(s,\varepsilon)}\sigma_{\tau}(u)\cdot v_\tau\leq\frac{1}{\left|B_{\Gamma}(s,\varepsilon)\right|}\int_{B_{\Gamma}(s,\varepsilon)}\ell\left\|v_\tau\right\|_{2}^{2},
$$
and thus $(\sigma_{\tau}(u)(s)-\ell (s)v_\tau(s))\cdot v_\tau(s)\leq 0$ by letting $\varepsilon\rightarrow0^{+}$. Moreover, since almost every point of $\Gamma_{\mathrm{N_S}}$ are in~$\mathrm{int}_{\Gamma}({\Gamma_{\mathrm{N_S}}})$ and are Lesbegue points of $\sigma_{\tau}(u)\cdot v_\tau\in\LL^{2}(\Gamma_{\mathrm{N_R}}\cup\Gamma_{\mathrm{N_S}})$ and of~$\ell\left\|v_\tau\right\|_{2}^{2}\in\LL^{2}(\Gamma_{\mathrm{N_R}}\cup\Gamma_{\mathrm{N_S}})$, one deduces 
$$
(\sigma_{\tau}(u)-\ell v_\tau)\cdot v_\tau\leq 0,
$$
\textit{a.e.} on $\Gamma_{\mathrm{N_S}}$. Finally, by considering $w=0$ and $w=2u$ in Inequa\-lity~\eqref{inegaliteDNS2},
one gets
$$
\displaystyle\int_{\Gamma_{\mathrm{N_S}}}u_\tau\cdot\left(\sigma_\tau(u)-\ell v_\tau\right)=0, 
$$
therefore $u_\tau\cdot(\sigma_\tau(u)-\ell v_\tau)=0$ \textit{a.e.} on $\Gamma_{\mathrm{N_S}}$ since $u\in\mathcal{K}^{1}(\Omega,\R^d)$. The proof is complete.
\end{proof}

Now let us prove that there exists a unique solution to the tangential Signorini problem~\eqref{PbtangSignorini}. To this aim let us introduce the functional $\Psi$ defined by
\begin{equation*}
\fonction{\Psi}{\HH^{1}_{\mathrm{D}}(\Omega,\R^d)}{\R}{w}{\displaystyle \Psi(w):=\int_{\Gamma_{\mathrm{N_R}}}\frac{k}{2}\left(\left\|w_\tau\right\|^{2}-\left|w_\tau\cdot v_\tau\right|^{2}\right) .}
\end{equation*}
Note that $\Psi$ is well defined since 
$k\in\LL^4(\Gamma_{\mathrm{N}_{\mathrm{R}}})$, $||v||\leq1$ \textit{a.e.} on $\Gamma_{\mathrm{N}_{\mathrm{R}}}$ and from the continuous embedding $\HH^{1}(\Omega,\R^d){\hookrightarrow} \LL^{4}(\Gamma,\R^d)$.

\begin{myLem}\label{fonctionnelleannexe}
The functional $\Psi$ is convex and Fréchet differentiable on $\HH^{1}_{\mathrm{D}}(\Omega,\R^d)$ and, for all~$w_{0}\in\HH^{1}_{\mathrm{D}}(\Omega,\R^d)$, $\nabla{\Psi(w_{0})}\in\HH^{1}_{\mathrm{D}}(\Omega,\R^d)$ is the unique solution to the Dirichlet-Neumann problem
\begin{equation}\label{DNproblem2}
\arraycolsep=2pt
\left\{
\begin{array}{rcll}
-\mathrm{div}(\mathrm{A}\mathrm{e}(\nabla{\Psi(w_{0})})) & = & 0   & \text{ in } \Omega , \\
\nabla{\Psi(w_{0})} & = & 0  & \text{ on } \Gamma_{\mathrm{D}} ,\\
\mathrm{A}\mathrm{e}(\nabla{\Psi(w_{0})})\nn & = & 0  & \text{ on } \Gamma_{\mathrm{N_T}}\cup\Gamma_{\mathrm{N_S}},\\
\sigma_\nn(\nabla{\Psi(w_{0})}) & = & 0  & \text{ on } \Gamma_{\mathrm{N_R}},\\
\sigma_\tau(\nabla{\Psi(w_{0})}) & = & k\left( w_{0_\tau}-\left(w_{0_\tau}\cdot v_{\tau}\right) v_{\tau} \right)  & \text{ on } \Gamma_{\mathrm{N_R}}.
\end{array}
\right.
\end{equation}
\end{myLem}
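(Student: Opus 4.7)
The plan is to establish convexity and Fréchet differentiability of $\Psi$ directly from its definition by Taylor expansion, then identify its Fréchet gradient with the unique solution of \eqref{DNproblem2} via Riesz representation combined with Propositions~\ref{DNequiart} and~\ref{existenceunicitéDN}.

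For convexity and well-definedness, I observe that, for almost every $s \in \Gamma_{\mathrm{N_R}}$, the integrand $y \in \R^d \mapsto \frac{k(s)}{2}(\|y\|^2 - (y \cdot v_\tau(s))^2)$ is a nonnegative multiple of a quadratic form whose Hessian is $k(s)(\mathrm{I}_d - v_\tau(s) v_\tau(s)^\top)$. This Hessian is positive semidefinite: its eigenvalues are $k(s)$ (with multiplicity $d-1$) and $k(s)(1 - \|v_\tau(s)\|^2)$, which are nonnegative since $k \geq 0$ and $\|v_\tau(s)\| \leq \|v(s)\| \leq 1$ a.e. on $\Gamma_{\mathrm{N_R}}$. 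So the integrand is convex in $y$ pointwise, and integration yields convexity of $\Psi$. Finiteness of $\Psi$ on $\HH^{1}_{\mathrm{D}}(\Omega,\R^d)$ follows from Hölder's inequality using $k \in \LL^{4}(\Gamma_{\mathrm{N_R}})$, $\|v_\tau\| \leq 1$, and the continuous embedding $\HH^{1}(\Omega,\R^d) \hookrightarrow \LL^{4}(\Gamma,\R^d)$.

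For Fréchet differentiability at $w_0 \in \HH^{1}_{\mathrm{D}}(\Omega,\R^d)$, I expand $\Psi(w_0 + h) - \Psi(w_0) = L_{w_0}(h) + R(h)$ with
\[L_{w_0}(h) := \int_{\Gamma_{\mathrm{N_R}}} k \bigl(w_{0_\tau} - (w_{0_\tau}\cdot v_\tau) v_\tau\bigr) \cdot h_\tau, \qquad R(h) := \int_{\Gamma_{\mathrm{N_R}}} \frac{k}{2}\bigl(\|h_\tau\|^2 - (h_\tau \cdot v_\tau)^2\bigr).\]
Applying Hölder's inequality with exponents $(4,4,2)$ together with $\|v_\tau\| \leq 1$ and the embedding $\HH^{1} \hookrightarrow \LL^{4}(\Gamma)$ yields $|L_{w_0}(h)| \leq C \|k\|_{\LL^{4}(\Gamma_{\mathrm{N_R}})} \|w_0\|_{\HH^{1}_{\mathrm{D}}(\Omega,\R^d)} \|h\|_{\HH^{1}_{\mathrm{D}}(\Omega,\R^d)}$, so $L_{w_0}$ is a continuous linear form on $\HH^{1}_{\mathrm{D}}(\Omega,\R^d)$, and $|R(h)| \leq C \|k\|_{\LL^{4}(\Gamma_{\mathrm{N_R}})} \|h\|_{\HH^{1}_{\mathrm{D}}(\Omega,\R^d)}^2 = o(\|h\|_{\HH^{1}_{\mathrm{D}}(\Omega,\R^d)})$. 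Hence $\Psi$ is Fréchet differentiable at $w_0$ with differential $L_{w_0}$.

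To identify the gradient with the solution of \eqref{DNproblem2}, define $z \in \LL^{2}(\Gamma_{\mathrm{N}},\R^d)$ by $z := k(w_{0_\tau} - (w_{0_\tau} \cdot v_\tau) v_\tau)$ on $\Gamma_{\mathrm{N_R}}$ and $z := 0$ on $\Gamma_{\mathrm{N_T}} \cup \Gamma_{\mathrm{N_S}}$ (its $\LL^{2}$-integrability follows again from Hölder). Since $z$ is tangential on $\Gamma_{\mathrm{N_R}}$, one has $L_{w_0}(h) = \int_{\Gamma_{\mathrm{N}}} z \cdot h$ for every $h \in \HH^{1}_{\mathrm{D}}(\Omega,\R^d)$. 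Riesz representation on the Hilbert space $(\HH^{1}_{\mathrm{D}}(\Omega,\R^d),\langle \cdot,\cdot \rangle_{\HH^{1}_{\mathrm{D}}(\Omega,\R^d)})$ then provides a unique $\nabla \Psi(w_0) \in \HH^{1}_{\mathrm{D}}(\Omega,\R^d)$ satisfying $\int_\Omega \mathrm{A}\mathrm{e}(\nabla \Psi(w_0)) : \mathrm{e}(h) = \int_{\Gamma_{\mathrm{N}}} z \cdot h$ for every $h$, which is exactly the weak formulation of \eqref{DNproblem2} with source $f = 0$ and Neumann datum $z$. Propositions~\ref{DNequiart} and~\ref{existenceunicitéDN} promote this weak solution to the unique strong solution of \eqref{DNproblem2}, and the tangentiality of $z$ on $\Gamma_{\mathrm{N_R}}$ together with its vanishing on $\Gamma_{\mathrm{N_T}} \cup \Gamma_{\mathrm{N_S}}$ produces exactly the normal/tangential boundary conditions of \eqref{DNproblem2}. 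The only mild obstacle is bookkeeping the splitting of $z$ across the three pieces of $\Gamma_{\mathrm{N}}$ and verifying that it is tangential on $\Gamma_{\mathrm{N_R}}$; there is no real conceptual difficulty since the integrand of $\Psi$ is a smooth quadratic form.
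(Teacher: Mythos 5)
Your proof is correct and follows essentially the same route as the paper: a second-order expansion of the quadratic functional identifying the continuous linear part and an $o(\|h\|_{\HH^{1}_{\mathrm{D}}(\Omega,\R^d)})$ remainder, followed by identification of the Riesz representative of that linear part with the weak (hence strong) solution of the Dirichlet--Neumann problem~\eqref{DNproblem2}. The only cosmetic difference is your convexity argument via the pointwise positive semidefiniteness of the Hessian $k(\mathrm{I}_d - v_\tau v_\tau^{\top})$, where the paper instead verifies the convexity inequality globally by an algebraic identity plus Cauchy--Schwarz; both are valid.
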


\begin{proof}
Let us start with the convexity of $\Psi$. Take $w_{1},w_{2}\in\HH^{1}_{\mathrm{D}}(\Omega,\R^d)$ and $\nu\in ( 0,1 )$. Then
\begin{multline*}
\Psi(\nu w_{1}+(1-\nu) w_{2})-\nu\Psi(w_1)-(1-\nu)\Psi(w_{2})=\\[10pt]
\int_{\Gamma_{\mathrm{N_R}}}-\frac{k}{2}\nu(1-\nu)[\left [\left\|w_{1_\tau}\right\|^{2}+\left\|w_{2_\tau}\right\|^{2}+2w_{1_\tau}\cdot w_{2_\tau}-\left|w_{1_\tau}\cdot v_\tau\right|^{2}-\left|w_{2_\tau}\cdot v_\tau\right|^{2}-2(w_{1_\tau}\cdot v_\tau)(w_{2_\tau}\cdot v_\tau)  \right ]\\=\int_{\Gamma_{\mathrm{N_R}}}-\frac{k}{2}\nu(1-\nu)\left\|w_{1_\tau}+w_{2_\tau}\right\|^{2}+\int_{\Gamma_{\mathrm{N_R}}}\frac{k}{2}\nu(1-\nu)\left|(w_{1_\tau}+w_{2_\tau})\cdot v_\tau\right|^{2}.
\end{multline*}
Since $k>0$ and $\left\|v\right\|\leq1$ \textit{a.e.} on $\Gamma_{\mathrm{N_R}}$, one deduces
\begin{multline*}
\Psi(\nu w_{1}+(1-\nu) w_{2})-\nu\Psi(w_1)-(1-\nu)\Psi(w_{2})\leq\\[10pt]
\int_{\Gamma_{\mathrm{N_R}}}-\frac{k}{2}\nu(1-\nu)\left\|w_{1_\tau}+w_{2_\tau}\right\|^{2}+\int_{\Gamma_{\mathrm{N_R}}}\frac{k}{2}\nu(1-\nu)\left\|w_{1_\tau}+w_{2_\tau}\right\|^{2}\left\|v_\tau\right\|^{2}\leq0.
\end{multline*}
Thus $\Psi$ is convex on $\HH^{1}_{\mathrm{D}}(\Omega,\R^d)$. Now let us prove that $\Psi$ is Fréchet differentiable. For $w_{0}\in\HH^{1}_{\mathrm{D}}(\Omega,\R^d)$ and $w\in\HH^{1}_{\mathrm{D}}(\Omega,\R^d)$, it holds that
\begin{equation*}
\Psi(w_0+w)=\Psi(w_0)+\int_{\Gamma_{\mathrm{N_R}}}k\left( w_{0_\tau}-\left(w_{0_\tau}\cdot v_{\tau}\right) v_{\tau} \right)\cdot w_{\tau}+\int_{\Gamma_{\mathrm{N_R}}}\frac{k}{2}\left(\left\|w_{\tau}\right\|^2-\left|w_\tau\cdot v_\tau\right|^2
\right).
\end{equation*}
Moreover one has
$$
\int_{\Gamma_{\mathrm{N_R}}}\frac{k}{2}\left(\left\|w_{\tau}\right\|^2-\left|w_\tau\cdot v_\tau\right|^2\right)=o(w),
$$
where $o$ stands for the standard Bachmann-Landau notation for the $\HH^{1}_{\mathrm{D}}(\Omega,\R^d)$-norm. Moreover the map
$$
\displaystyle w\in\HH^{1}_{\mathrm{D}}(\Omega,\R^d)\mapsto \int_{\Gamma_{\mathrm{N_R}}}k\left( w_{0_\tau}-\left(w_{0_\tau}\cdot v_{\tau}\right) v_{\tau} \right)\cdot w_{\tau}\in\R,
$$
is linear and continuous. Therefore $\Psi$ is Fréchet differentiable in $w_{0}\in\HH^{1}_{\mathrm{D}}(\Omega,\R^d)$ and
$$
\dual{\nabla{\Psi(w_{0})}}{w}_{\HH^{1}_{\mathrm{D}}(\Omega,\R^d)}=\int_{\Gamma_{\mathrm{N_R}}}k\left( w_{0_\tau}-\left(w_{0_\tau}\cdot v_{\tau}\right) v_{\tau} \right)\cdot w_{\tau}, \qquad \forall w\in\HH^{1}_{\mathrm{D}}(\Omega,\R^d).
$$
In other words $\nabla{\Psi(w_{0})}\in\HH^{1}_{\mathrm{D}}(\Omega,\R^d)$ is the unique solution to the Dirichlet-Neumann problem~\eqref{DNproblem2}. The proof is complete.
\end{proof}

\begin{myProp}\label{existenceSignorinitang}
The tangential Signorini problem~\eqref{PbtangSignorini} admits a unique weak solution $u\in\HH^{1}_{\mathrm{D}}(\Omega,\R^d)$ which is given by
$$
    \displaystyle u=\mathrm{prox}_{\Psi+\iota_{\mathcal{K}^{1}(\Omega,\R^d)}}(F),
$$
where $F\in\HH^{1}_{\mathrm{D}}(\Omega,\R^d)$ is the unique solution to the Dirichlet-Neumann problem~\eqref{PbNeumannDirichlet} with $z:=h\nn+\ell v_\tau\in\LL^2(\Gamma_\mathrm{N},\R^d)$, and $\mathrm{prox}_{\Psi+\iota_{\mathcal{K}^{1}(\Omega,\R^d)}}$ stands for the proximal operator associated with the functional $\Psi+\iota_{\mathcal{K}^{1}(\Omega,\R^d)}$.
\end{myProp}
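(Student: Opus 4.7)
The plan is to verify that $u:=\mathrm{prox}_{\Psi+\iota_{\mathcal{K}^{1}(\Omega,\R^d)}}(F)$ is well-defined, to express its characterization as the first-order optimality condition of an associated strictly convex minimization problem, and then to translate that condition---using the variational identity defining $F$ and the explicit form of $\nabla\Psi$ from Lemma~\ref{fonctionnelleannexe}---into the weak formulation~\eqref{FaibleSignorini}. Uniqueness will then follow from strict convexity of the minimization problem.

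Since $\mathcal{K}^{1}(\Omega,\R^d)$ is a nonempty closed convex subset of $\HH^{1}_{\mathrm{D}}(\Omega,\R^d)$ (it contains $0$), its indicator $\iota_{\mathcal{K}^{1}(\Omega,\R^d)}$ is proper, convex and lower semicontinuous. Combined with the convexity and continuity of $\Psi$ from Lemma~\ref{fonctionnelleannexe}, the sum $\Psi+\iota_{\mathcal{K}^{1}(\Omega,\R^d)}$ is proper, convex and lower semicontinuous on $\HH^{1}_{\mathrm{D}}(\Omega,\R^d)$, so its proximal operator is well-defined everywhere. Since $z:=h\nn+\ell v_\tau\in\LL^{2}(\Gamma_\mathrm{N},\R^d)$, Proposition~\ref{existenceunicitéDN} furnishes the unique $F\in\HH^{1}_{\mathrm{D}}(\Omega,\R^d)$ satisfying
$$
\int_{\Omega}\mathrm{A}\mathrm{e}(F):\mathrm{e}(w)=\int_{\Omega}f\cdot w+\int_{\Gamma_{\mathrm{N}}}hw_\nn+\int_{\Gamma_{\mathrm{N}}}\ell v_\tau\cdot w_\tau,\qquad\forall w\in\HH^{1}_{\mathrm{D}}(\Omega,\R^d).
$$

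By definition, $u=\mathrm{prox}_{\Psi+\iota_{\mathcal{K}^{1}(\Omega,\R^d)}}(F)$ is the unique minimizer over $\HH^{1}_{\mathrm{D}}(\Omega,\R^d)$ of the functional $w\mapsto\frac{1}{2}\|w-F\|^{2}_{\HH^{1}_{\mathrm{D}}(\Omega,\R^d)}+\Psi(w)+\iota_{\mathcal{K}^{1}(\Omega,\R^d)}(w)$; in particular $u\in\mathcal{K}^{1}(\Omega,\R^d)$, and since $\Psi$ is Fréchet differentiable the first-order optimality condition reads
$$
\dual{u-F+\nabla\Psi(u)}{w-u}_{\HH^{1}_{\mathrm{D}}(\Omega,\R^d)}\geq 0,\qquad\forall w\in\mathcal{K}^{1}(\Omega,\R^d).
$$
Expanding each inner product via the definition of $\dual{\cdot}{\cdot}_{\HH^{1}_{\mathrm{D}}(\Omega,\R^d)}$, substituting the variational identity satisfied by $F$ at the test function $w-u$, and inserting the explicit expression for $\nabla\Psi(u)$ provided by Lemma~\ref{fonctionnelleannexe}, one arrives after rearrangement at
\begin{multline*}
\int_{\Omega}\mathrm{A}\mathrm{e}(u):\mathrm{e}(w-u)\geq \int_{\Omega}f\cdot(w-u)+\int_{\Gamma_{\mathrm{N}}}h(w_\nn-u_\nn)\\
+\int_{\Gamma_{\mathrm{N}}}\ell v_\tau\cdot(w_\tau-u_\tau)-\int_{\Gamma_{\mathrm{N_R}}}k(u_\tau-(u_\tau\cdot v_\tau)v_\tau)\cdot(w_\tau-u_\tau).
\end{multline*}
Since $w_\tau-u_\tau=0$ almost everywhere on $\Gamma_{\mathrm{N_T}}$ for all $w,u\in\mathcal{K}^{1}(\Omega,\R^d)$, the $\Gamma_\mathrm{N}$-integral of $\ell v_\tau\cdot(w_\tau-u_\tau)$ reduces to the sum of its restrictions to $\Gamma_{\mathrm{N_R}}$ and $\Gamma_{\mathrm{N_S}}$, and regrouping the $\Gamma_{\mathrm{N_R}}$ contributions yields exactly~\eqref{FaibleSignorini}. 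Hence $u$ is a weak solution.

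For uniqueness, every step above is reversible: if $\tilde u\in\mathcal{K}^{1}(\Omega,\R^d)$ satisfies~\eqref{FaibleSignorini}, running the computation backwards shows that $\tilde u$ fulfills the first-order optimality condition for the strictly convex functional above, whose unique minimizer is $u$; thus $\tilde u=u$. The main technical point is the boundary-term bookkeeping, namely checking that the contribution $\int_{\Gamma_\mathrm{N}}\ell v_\tau\cdot(w_\tau-u_\tau)$ coming from $F$ decomposes correctly across the tripartition $\Gamma_{\mathrm{N_T}}\cup\Gamma_{\mathrm{N_R}}\cup\Gamma_{\mathrm{N_S}}$, which works precisely because membership in $\mathcal{K}^{1}(\Omega,\R^d)$ annihilates the $\Gamma_{\mathrm{N_T}}$ piece.
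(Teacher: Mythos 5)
Your proof is correct and follows essentially the same route as the paper: both characterize $u=\mathrm{prox}_{\Psi+\iota_{\mathcal{K}^{1}(\Omega,\R^d)}}(F)$ through the variational inequality $\dual{F-u-\nabla\Psi(u)}{w-u}_{\HH^{1}_{\mathrm{D}}(\Omega,\R^d)}\leq 0$ on $\mathcal{K}^{1}(\Omega,\R^d)$, use the variational identity for $F$ and the expression of $\nabla\Psi$ from Lemma~\ref{fonctionnelleannexe} to match it with~\eqref{FaibleSignorini}, and obtain uniqueness from the single-valuedness of the proximal operator (the paper phrases this as a chain of equivalences via Proposition~\ref{gradientequi}, you as a forward computation plus reversibility, which is the same content).
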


\begin{proof}
Let $F\in\HH^{1}_{\mathrm{D}}(\Omega,\R^d)$ be the solution to the Dirichlet-Neumann problem~\eqref{PbNeumannDirichlet} with~$z:=h\nn+\ell v_\tau\in\LL^2(\Gamma_\mathrm{N},\R^d)$. Then
$$
\dual{F}{w}_{\HH^{1}_{\mathrm{D}}(\Omega,\R^d)}=\int_{\Omega}f\cdot w+\int_{\Gamma_{\mathrm{N}}}hw_{\nn}+\int_{\Gamma_{\mathrm{N}}}\ell v_\tau\cdot w_\tau, \qquad \forall w\in\HH^{1}_{\mathrm{D}}(\Omega,\R^d).
$$
Let $u\in\HH^{1}_{\mathrm{D}}(\Omega,\R^d)$ and note that $\Psi+\iota_{\mathcal{K}^{1}(\Omega,\R^d)}$ is a proper lower semi-continuous convex function on $\HH^{1}_{\mathrm{D}}(\Omega,\R^d)$. Then $u$ is the weak solution to the tangentiel Signorini problem~\eqref{PbtangSignorini} if and only if~$u\in\mathcal{K}^{1}(\Omega,\R^d)$ and
\begin{multline*}
\dual{u}{w-u}_{\HH^{1}_{\mathrm{D}}(\Omega,\R^d)}\geq\int_{\Omega}f\cdot \left(w-u\right)+\int_{\Gamma_{\mathrm{N}}}h\left(w_{\nn}-u_\nn\right)\\+\int_{\Gamma_{\mathrm{N_R}}}\left(\ell v_\tau-k\left( u_{\tau}-\left(u_{\tau}\cdot v_{\tau}\right) v_{\tau}\right) \right)\cdot\left(w_{\tau}-u_\tau\right)+\int_{\Gamma_{\mathrm{N_S}}}\ell v_\tau\cdot \left(w_\tau-u_\tau\right), \qquad \forall w\in\mathcal{K}^{1}(\Omega,\R^d),
\end{multline*}
i.e. if and only if
\begin{equation*}
\int_{\Gamma_{\mathrm{N_R}}}k\left( u_{\tau}-\left(u_{\tau}\cdot v_{\tau}\right) v_{\tau} \right)\cdot\left(w_{\tau}-u_\tau\right)\geq\dual{F-u}{w-u}_{\HH^{1}_{\mathrm{D}}(\Omega,\R^d)},\qquad\forall w\in\mathcal{K}^{1}(\Omega,\R^d),
\end{equation*}
i.e. if and only if (see~Proposition~\ref{gradientequi})
\begin{equation*}
\Psi(w)-\Psi(u)\geq \dual{F-u}{w-u}_{\HH^{1}_{\mathrm{D}}(\Omega,\R^d)}, \qquad\forall w\in\mathcal{K}^{1}(\Omega,\R^d),
\end{equation*}
i.e. if and only if
$$
\dual{F-u}{w-u}_{\HH^{1}_{\mathrm{D}}(\Omega,\R^d)}\leq \Psi(w)-\Psi(u)+\iota_{\mathcal{K}^{1}(\Omega,\R^d)}(w)-\iota_{\mathcal{K}^{1}(\Omega,\R^d)}(u), \qquad\forall w\in\HH^{1}_{\mathrm{D}}(\Omega,\R^d),
$$
i.e. if and only if
$
F-u\in\partial{\left(\Psi+\iota_{\mathcal{K}^{1}(\Omega,\R^d)}\right)}(u)
$, 
i.e. if and only if
$
u=\mathrm{prox}_{\Psi+\iota_{\mathcal{K}^{1}(\Omega,\R^d)}}(F)
$,
which concludes the proof.
\end{proof}

\subsubsection{A Tresca friction problem}\label{subtresca}
Let $h\in \mathrm{L}^{2}(\Gamma_{\mathrm{N}})$ and $g\in \mathrm{L}^{2}(\Gamma_{\mathrm{N}})$ such that $g>0$ \textit{a.e.} on $\Gamma_{\mathrm{N}}$. Consider the Tresca friction problem given by
\begin{equation}\tag{TP}\label{PbTresca}
\arraycolsep=2pt
\left\{
\begin{array}{rcll}
-\mathrm{div}(\mathrm{A}\mathrm{e}(u)) & = & f   & \text{ in } \Omega , \\
u & = & 0  & \text{ on } \Gamma_{\mathrm{D}} ,\\
\sigma_\nn(u) & = & h  & \text{ on } \Gamma_{\mathrm{N}},\\
\left\|\sigma_\tau(u)\right\|\leq g \text{ and } u_{\tau}\cdot\sigma_{\tau}(u)+g\left\|u_{\tau}\right\| & = & 0  & \text{ on } \Gamma_{\mathrm{N}}.
\end{array}
\right.
\end{equation}

\begin{myDefn}[Strong solution to the Tresca friction problem]
A (strong) solution to the Tresca friction problem~\eqref{PbTresca} is a function $u\in\HH^{1}(\Omega,\R^d)$ such that $-\mathrm{div}(\mathrm{A}\mathrm{e}(u))=f$ in $\MD'(\Omega,\R^d)$,~$u=0$ \textit{a.e.} on $\Gamma_{\mathrm{D}}$, $\mathrm{A}\mathrm{e}(u)\nn\in\LL^{2}(\Gamma_{\mathrm{N}},\R^d)$ with $\sigma_\nn(u)=h$, $\left\|\sigma_\tau(u)\right\|\leq g$ and $u_{\tau}\cdot\sigma_{\tau}(u)+g\left\|u_{\tau}\right\|=0$ \textit{a.e.} on~$\Gamma_{\mathrm{N}}$.
\end{myDefn}

\begin{myDefn}[Weak solution to the Tresca friction problem]
A weak solution to the Tresca friction problem~\eqref{PbTresca} is a function $u\in\HH^{1}_{\mathrm{D}}(\Omega,\R^d)$ such that
\begin{multline*}
\displaystyle\int_{\Omega}\mathrm{A}\mathrm{e}(u):\mathrm{e}(w-u)+\int_{\Gamma_{\mathrm{N}}}g\left\|w_\tau\right\|-\int_{\Gamma_{\mathrm{N}}}g\left\|u_\tau\right\| \geq\int_{\Omega}f\cdot\left(w-u\right)\\+\int_{\Gamma_{\mathrm{N}}}h\left(w_\nn-u_\nn\right), \qquad \forall w\in\HH^{1}_{\mathrm{D}}(\Omega,\R^d).
\end{multline*}
\end{myDefn}

\begin{myProp}\label{Trescaequivalent}
A function $u\in \HH^{1}(\Omega,\R^d)$ is a (strong) solution to the Tresca friction problem~\eqref{PbTresca} if and only if $u$ is a weak solution to the Tresca friction problem~\eqref{PbTresca}.
\end{myProp}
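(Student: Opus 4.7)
The equivalence splits into two implications, whose proofs follow a standard scheme.

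\textbf{Strong $\Rightarrow$ Weak.} Given a strong solution $u$, the identity $-\mathrm{div}(\mathrm{A}\mathrm{e}(u))=f$ holds in $\LL^{2}(\Omega,\R^d)$ since $f\in\LL^{2}(\Omega,\R^d)$. Multiplying by $w-u$ for arbitrary $w\in\HH^{1}_{\mathrm{D}}(\Omega,\R^d)$ and applying the divergence formula (Proposition~\ref{div}), together with $\mathrm{A}\mathrm{e}(u)\nn\in\LL^{2}(\Gamma_{\mathrm{N}},\R^d)$ and $u=w=0$ \textit{a.e.}\ on $\Gamma_{\mathrm{D}}$, yields
\[
\int_{\Omega}\mathrm{A}\mathrm{e}(u):\mathrm{e}(w-u)=\int_{\Omega}f\cdot(w-u)+\int_{\Gamma_{\mathrm{N}}}\sigma_\nn(u)(w_\nn-u_\nn)+\int_{\Gamma_{\mathrm{N}}}\sigma_\tau(u)\cdot(w_\tau-u_\tau).
\]
Substituting $\sigma_\nn(u)=h$ and combining the identity $\sigma_\tau(u)\cdot u_\tau=-g\left\|u_\tau\right\|$ with the Cauchy--Schwarz bound $\sigma_\tau(u)\cdot w_\tau\geq-\left\|\sigma_\tau(u)\right\|\,\left\|w_\tau\right\|\geq-g\left\|w_\tau\right\|$ produces exactly the desired variational inequality.

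\textbf{Weak $\Rightarrow$ Strong.} First I would recover the bulk PDE by testing with $w=u\pm\varphi$ for $\varphi\in\MD(\Omega,\R^d)$: since $\varphi$ has compact support in $\Omega$, its trace vanishes and hence the Tresca and Neumann boundary terms drop out, leaving $\pm\int_{\Omega}\mathrm{A}\mathrm{e}(u):\mathrm{e}(\varphi)\geq\pm\int_{\Omega}f\cdot\varphi$, i.e.\ $-\mathrm{div}(\mathrm{A}\mathrm{e}(u))=f$ in $\MD'(\Omega,\R^d)$, and thus in $\LL^{2}(\Omega,\R^d)$. The trace $\mathrm{A}\mathrm{e}(u)\nn$ is then well-defined in $\HH^{-1/2}(\Gamma,\R^d)$, and the divergence formula recasts the weak inequality as
\[
\dual{\mathrm{A}\mathrm{e}(u)\nn}{w-u}_{\HH^{-1/2}(\Gamma,\R^d)\times\HH^{1/2}(\Gamma,\R^d)}+\int_{\Gamma_{\mathrm{N}}}g\bigl(\left\|w_\tau\right\|-\left\|u_\tau\right\|\bigr)\geq\int_{\Gamma_{\mathrm{N}}}h(w_\nn-u_\nn),
\]
for every $w\in\HH^{1}_{\mathrm{D}}(\Omega,\R^d)$.

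Next I would isolate the normal and tangential parts of the boundary condition. Testing with $w=u\pm\psi$, where $\psi\in\HH^{1}_{\mathrm{D}}(\Omega,\R^d)$ is an $H^{1}$-lifting of a purely normal trace $\phi\nn$ on $\Gamma_{\mathrm{N}}$ (the construction exploits the $\mathcal{C}^{1}$-regularity of $\Gamma$), leaves $\left\|u_\tau\right\|$ unchanged so that the Tresca term cancels; the two sign choices combined with arbitrariness of $\phi\in\LL^{2}(\Gamma_{\mathrm{N}})$ give $\sigma_\nn(u)=h$ in $\LL^{2}(\Gamma_{\mathrm{N}})$. Testing instead with $w=u+\psi$, where $\psi$ lifts a purely tangential trace, collapses the Neumann term and produces
\[
\dual{\sigma_\tau(u)}{\psi_\tau}+\int_{\Gamma_{\mathrm{N}}}g\bigl(\left\|u_\tau+\psi_\tau\right\|-\left\|u_\tau\right\|\bigr)\geq 0,
\]
which asserts that $-\sigma_\tau(u)$ belongs to the subdifferential of the convex functional $v\mapsto\int_{\Gamma_{\mathrm{N}}}g\left\|v_\tau\right\|$ at $u$. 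Using a localization argument based on the hypothesis that \textit{a.e.}\ point of $\Gamma_{\mathrm{N}}$ lies in $\mathrm{int}_{\Gamma}(\Gamma_{\mathrm{N}})$, this globally subdifferential inclusion descends to the pointwise inclusion $-\sigma_\tau(u)(s)\in\partial(g(s)\left\|\cdot\right\|)(u_\tau(s))$ \textit{a.e.}\ on $\Gamma_{\mathrm{N}}$, which delivers both $\left\|\sigma_\tau(u)\right\|\leq g$ and $\sigma_\tau(u)\cdot u_\tau+g\left\|u_\tau\right\|=0$ \textit{a.e.}\ on $\Gamma_{\mathrm{N}}$, and in particular $\sigma_\tau(u)\in\LL^{2}(\Gamma_{\mathrm{N}},\R^d)$, completing the proof that $\mathrm{A}\mathrm{e}(u)\nn\in\LL^{2}(\Gamma_{\mathrm{N}},\R^d)$.

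The main obstacle is this last pointwise recovery of the Tresca law, since the tangential trace $\sigma_\tau(u)$ is only \emph{a priori} in $\HH^{-1/2}$: it requires the localization argument based on the interiority hypothesis on $\Gamma_{\mathrm{N}}$ (in the spirit of Proposition~\ref{EquiSignorini}(ii)), together with the careful construction of $H^{1}$-liftings that separately target tangential and normal boundary components.
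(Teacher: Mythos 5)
Your strong $\Rightarrow$ weak direction is correct and is the same argument as the paper's. The gap is in the converse, specifically in the order in which you establish the $\LL^{2}$-regularity of the conormal derivative. You propose to obtain the pointwise inclusion $-\sigma_\tau(u)(s)\in\partial\bigl(g(s)\left\|\cdot\right\|\bigr)(u_\tau(s))$ by a localization/Lebesgue-point argument and to deduce $\sigma_\tau(u)\in\LL^{2}(\Gamma_{\mathrm{N}},\R^d)$ ``in particular'' as a consequence. This is circular: as long as $\mathrm{A}\mathrm{e}(u)\nn$ is only known to lie in $\HH^{-1/2}_{00}(\Gamma_{\mathrm{N}},\R^d)$, it has no pointwise values and no Lebesgue points, and the indicator-type test functions supported on small balls $B_{\Gamma}(s,\varepsilon)$ that the localization requires are not admissible in the $\HH^{-1/2}_{00}\times\HH^{1/2}_{00}$ duality. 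The localization can only be run \emph{after} the trace has been identified with an $\LL^{2}$ function.

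The missing step, which is how the paper proceeds, is an a priori bound: test the weak inequality with $w=u-\varphi$ for arbitrary $\varphi\in\HH^{1/2}_{00}(\Gamma_{\mathrm{N}},\R^d)$ and use the triangle inequality $\left\|u_\tau-\varphi_\tau\right\|-\left\|u_\tau\right\|\leq\left\|\varphi_\tau\right\|$ to get
$$
\dual{\mathrm{A}\mathrm{e}(u)\nn}{\varphi}_{\HH^{-1/2}_{00}(\Gamma_{\mathrm{N}},\R^d)\times \HH^{1/2}_{00}(\Gamma_{\mathrm{N}},\R^d)}\leq \left(\left\|g\right\|_{\LL^{2}(\Gamma_{\mathrm{N}})}+\left\|h\right\|_{\LL^{2}(\Gamma_{\mathrm{N}})}\right)\left\|\varphi\right\|_{\LL^{2}(\Gamma_{\mathrm{N}},\R^d)},
$$
then invoke Proposition~\ref{Ident} to conclude $\mathrm{A}\mathrm{e}(u)\nn\in\LL^{2}(\Gamma_{\mathrm{N}},\R^d)$ and extend the variational inequality by density to $\LL^{2}$ test data. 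Only then do the normal/tangential splitting, the identification $\sigma_\nn(u)=h$, and the Lebesgue-point argument (applied to $\left\|\sigma_\tau(u)\right\|^{2}$ and $g\left\|\sigma_\tau(u)\right\|$, yielding $\left\|\sigma_\tau(u)\right\|\leq g$ \textit{a.e.}, after which $w=0$ and $w=2u$ give $u_\tau\cdot\sigma_\tau(u)+g\left\|u_\tau\right\|=0$ \textit{a.e.}) become legitimate. Note also that this route avoids the need to construct separate $\HH^{1}$-liftings of normal and tangential traces: once the inequality holds for all $w\in\LL^{2}(\Gamma_{\mathrm{N}},\R^d)$, one simply chooses $w=u\pm\psi\nn$ and tangentially supported perturbations directly at the $\LL^{2}$ level.
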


From definition of the proximal operator (see Definition~\ref{proxi}), one deduces the following existence/uniqueness result.
\begin{myProp}\label{existenceuniciteTresca}
The Tresca friction problem~\eqref{PbTresca} admits a unique (strong) solution $u\in\HH^{1}_{\mathrm{D}}(\Omega,\R^d)$ given by
$$
\displaystyle u=\mathrm{prox}_{\phi}(F),
$$
where $F\in\HH^{1}_{\mathrm{D}}(\Omega,\R^d)$ is the solution to the Dirichlet-Neumann problem~\eqref{PbNeumannDirichlet} with $z:=h\nn\in\LL^{2}(\Gamma_{\mathrm{N}},\R^d)$, and where $\mathrm{prox}_{\phi}$ stands for the proximal operator associated with the Tresca friction functional $\phi$ defined by 
\begin{equation*}
\displaystyle\fonction{\phi}{\HH^{1}_{\mathrm{D}}(\Omega,\R^d)}{\R}{w}{\displaystyle \phi(w):=\int_{\Gamma_{\mathrm{N}}}g\left\|w_\tau\right\|.}
\end{equation*}
\end{myProp}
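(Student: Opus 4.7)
The plan is to mirror the strategy used in Proposition~\ref{existenceSignorinitang}, exploiting the variational characterization of the proximal operator via the convex subdifferential. By Proposition~\ref{Trescaequivalent} a strong solution to~\eqref{PbTresca} is exactly a weak solution, so it is enough to establish the existence and uniqueness of a weak solution and to identify it with $\mathrm{prox}_{\phi}(F)$. Before anything, I would verify that $\phi$ is a proper, lower semi-continuous, convex function on $\HH^{1}_{\mathrm{D}}(\Omega,\R^d)$: properness is immediate since $\phi(0)=0$; convexity follows from the convexity of the Euclidean norm on~$\R^{d}$ combined with $g>0$ \textit{a.e.} on~$\Gamma_{\mathrm{N}}$; and lower semi-continuity follows from the continuity of the trace map $\HH^{1}(\Omega,\R^d)\hookrightarrow\LL^{2}(\Gamma,\R^d)$ together with $g\in\LL^{2}(\Gamma_{\mathrm{N}})$. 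Consequently, by Moreau's theorem, $\mathrm{prox}_{\phi}(F)$ is well defined for every~$F\in\HH^{1}_{\mathrm{D}}(\Omega,\R^d)$.

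Next, I would use Proposition~\ref{existenceunicitéDN} with $z:=h\nn\in\LL^{2}(\Gamma_{\mathrm{N}},\R^d)$ to obtain the unique $F\in\HH^{1}_{\mathrm{D}}(\Omega,\R^d)$ satisfying
$$
\dual{F}{w}_{\HH^{1}_{\mathrm{D}}(\Omega,\R^d)}=\int_{\Omega}f\cdot w+\int_{\Gamma_{\mathrm{N}}}h w_{\nn}, \qquad \forall w\in\HH^{1}_{\mathrm{D}}(\Omega,\R^d),
$$
so that the weak formulation of~\eqref{PbTresca} can be rewritten, for $u\in\HH^{1}_{\mathrm{D}}(\Omega,\R^d)$, as
$$
\dual{u}{w-u}_{\HH^{1}_{\mathrm{D}}(\Omega,\R^d)}+\phi(w)-\phi(u)\geq \dual{F}{w-u}_{\HH^{1}_{\mathrm{D}}(\Omega,\R^d)}, \qquad \forall w\in\HH^{1}_{\mathrm{D}}(\Omega,\R^d),
$$
or equivalently, after rearrangement,
$$
\dual{F-u}{w-u}_{\HH^{1}_{\mathrm{D}}(\Omega,\R^d)}\leq \phi(w)-\phi(u), \qquad \forall w\in\HH^{1}_{\mathrm{D}}(\Omega,\R^d).
$$

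The last inequality is by definition the inclusion $F-u\in\partial\phi(u)$ in the sense of the convex subdifferential, which in turn is equivalent (via the standard characterization of the proximal operator recalled in Definition~\ref{proxi} in the appendix) to $u=\mathrm{prox}_{\phi}(F)$. Since $\phi$ is proper, lower semi-continuous, and convex, the latter equation admits a unique solution, which yields both existence and uniqueness of the weak (hence strong) solution to~\eqref{PbTresca}, together with the announced formula. The only step requiring any care is the lower semi-continuity and convexity check for~$\phi$, but these are straightforward consequences of the trace embedding and the convexity of the Euclidean norm; no nontrivial obstacle is expected.
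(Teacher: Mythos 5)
Your argument is correct and follows essentially the route the paper intends: the paper simply states that the result follows "from the definition of the proximal operator," relying (as you do) on the equivalence of strong and weak solutions from Proposition~\ref{Trescaequivalent} and on rewriting the variational inequality as $F-u\in\partial\phi(u)$, exactly as in the detailed proof of Proposition~\ref{existenceSignorinitang}. Your preliminary checks on $\phi$ (properness, convexity, lower semi-continuity via the trace embedding) are sound and fill in the details the paper leaves to the reader.
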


\begin{myRem}\normalfont\label{interpot}
The assumption that almost every point of $\Gamma_{\mathrm{N}}$ is in $\mathrm{int}_{\Gamma}({\Gamma_{\mathrm{N}}})$ is only used to prove that a weak solution to the Tresca friction problem~\eqref{PbTresca} is also a (strong) solution (precisely to get the Tresca friction law pointwisely on~$\Gamma_{\mathrm{N}}$). Of course, some sets do not satisfy this assumption, for instance the well-known Smith–Volterra–Cantor set (see, e.g,~\cite[Example 6.15 Section 6 Chapter 1]{ALIP}). Nevertheless it is trivially satisfied in most of standard cases found in practice. Furthermore, if this assumption is not satisfied, one can also prove that the weak solution to the Tresca friction problem~\eqref{PbTresca} is a (strong) solution by adding the assumption that~$g\in\LL^{\infty}(\Gamma_{\mathrm{N}})$, and by using the isometry between the dual of~$(\LL^{1}(\Gamma_{\mathrm{N}},\R^d), \| \cdot  \|_{\LL^{1}(\Gamma_{\mathrm{N}},\R^d)_{g}})$ and~$\LL^{\infty}(\Gamma_{\mathrm{N}},\R^d)$ (with its standard norm~$\| \cdot \|_{\LL^{\infty}(\Gamma_{\mathrm{N}},\R^d)}$) where~$\| \cdot \|_{\LL^{1}(\Gamma_{\mathrm{N}},\R^d)_{g}}$ is the norm defined by
$$
\fonction{\left \| \cdot \right \|_{\LL^{1}(\Gamma_{\mathrm{N}},\R^d)_{g}}}{\LL^{1}(\Gamma_{\mathrm{N}},\R^d)}{\R}{w}{\displaystyle\int_{\Gamma_{\mathrm{N}}}g\left\| w \right \|.}
$$
We refer to~\cite[Chapitre 3]{DUVAUTLIONS} for details in a similar context.
\end{myRem}

\subsection{Sensitivity analysis of the Tresca friction problem}\label{section4}
In this section we perform the sensitivity analysis of the Tresca friction problem. To this aim we consider the parameterized Tresca friction problem given by

 \begin{equation}\tag{TP\ensuremath{_{t}}} \label{PbNeumannDirichletTrescaPara}\arraycolsep=2pt
\left\{
\begin{array}{rcll}
-\mathrm{div}(\mathrm{A}\mathrm{e}(u_{t})) & = & f_{t}   & \text{ in } \Omega , \\
u_{t} & = & 0  & \text{ on } \Gamma_{\mathrm{D}} ,\\
\sigma_\nn(u_{t}) & = & h_{t}  & \text{ on } \Gamma_{\mathrm{N}},\\
\left\|\sigma_\tau(u_{t})\right\|\leq g_{t} \text{ and } u_{t_\tau}\cdot\sigma_{\tau}(u_{t})+g_t\left\|u_{t_\tau}\right\| & = & 0  & \text{ on } \Gamma_{\mathrm{N}},
\end{array}
\right.
\end{equation}
where $f_{t}\in\LL^{2}(\Omega,\R^d)$, $h_{t}\in\LL^{2}(\Gamma_{\mathrm{N}})$ and $g_{t}\in\LL^{2}(\Gamma_{\mathrm{N}})$ such that~$g_{t}>0$ \textit{a.e.} on $\Gamma_{\mathrm{N}}$, for all $t\geq0$. 

\subsubsection{Parameterized Tresca friction functional and twice epi-differentiability}\label{sectiontwiceepi}
Let us introduce the parameterized Tresca friction functional given by
\begin{equation}\label{fonctionnelledeTrescaparacas2}
\displaystyle\fonction{\Phi}{\mathbb{R}_{+}\times \HH^{1}_{\mathrm{D}}(\Omega,\R^d)}{\R}{(t,w)}{\displaystyle \Phi(t,w):=\int_{\Gamma_{\mathrm{N}}}g_{t}\left\|w_\tau\right\|.}
\end{equation}
From Proposition~\ref{existenceuniciteTresca}, the unique solution to the parameterized Tresca friction problem~\eqref{PbNeumannDirichletTrescaPara} is given by
\begin{equation*}
   \displaystyle u_{t}=\mathrm{prox}_{\Phi(t,\mathord{\cdot})}(F_{t}),
\end{equation*}
where $F_{t}$ is the unique solution to the parameterized Dirichlet-Neumann problem
\begin{equation}\tag{DN\ensuremath{_{t}}} \label{PbNeumannDirichletPara}
\arraycolsep=2pt
\left\{
\begin{array}{rcll}
-\mathrm{div}(\mathrm{A}\mathrm{e}(F_{t})) & = & f_{t}   & \text{ in } \Omega , \\
F_{t} & = & 0  & \text{ on } \Gamma_{\mathrm{D}} ,\\
\mathrm{A}\mathrm{e}(F_{t})\nn & = & h_{t}\nn  & \text{ on } \Gamma_{\mathrm{N}},
\end{array}
\right.
\end{equation}
for all $t\geq0$.
Similarly to the scalar case (see~\cite{BCJDC}), since the parameterized Tresca friction functional depends on a parameter $t\geq0$, we have to use the notion of twice epi-differentiability depending on a parameter (see Definition~\ref{epidiffpara}), in order to apply Theorem~\ref{TheoABC2018}.
Let us prepare the background for the twice epi-differentiability of the parameterized Tresca friction functional. More specifically, let us start with the characterization of the convex subdifferential of $\Phi(0,\cdot)$ (see Definition~\ref{sousdiff}). To this aim, for all $s\in\Gamma_{\mathrm{N}}$, we introduce the tangential norm map defined by
\begin{equation*}
\displaystyle\fonction{\left\|\cdot_{\tau(s)}\right\|}{\R^d}{\R}{x}{\displaystyle \left\|x_{\tau(s)}\right\|,}
\end{equation*}
and we introduce an auxiliary problem defined, for all~$u\in\HH^{1}_{\mathrm{D}}(\Omega,\R^d)$, by
\begin{equation}\tag{AP\ensuremath{_{u}}}\label{PbannexesousdiffDNT}
\arraycolsep=2pt
\left\{
\begin{array}{rl}
-\mathrm{div}(\mathrm{A}\mathrm{e}(v)) =  0   & \text{ in } \Omega , \\
v  =  0 & \text{ on } \Gamma_{\mathrm{D}} , \\
\sigma_{\nn}(v)  =  0 & \text{ on } \Gamma_{\mathrm{N}} , \\
\sigma_{\tau}(v)(s) \in  g_{0}(s)\partial \left\|\mathord{\cdot}_{\tau(s)}\right\|(u(s))  & \text{ on } \Gamma_{\mathrm{N}},\\
\end{array}
\right.
\end{equation}
where, for almost all $s\in\Gamma_{\mathrm{N}}$, $\partial\left\|\cdot_{\tau(s)}\right\|(u(s))$ stands for the convex subdifferential of the tangential norm map $\left\|\cdot_{\tau(s)}\right\|$ at $u(s)\in\R^{d}$.
For a given $u\in \HH^{1}_{\mathrm{D}}(\Omega,\R^d)$, a solution to this problem~\eqref{PbannexesousdiffDNT} is a function~$v\in\HH^{1}(\Omega,\R^d)$ such that $-\mathrm{div}(\mathrm{A}\mathrm{e}(v))=0$ in~$\mathcal{D}'
(\Omega,\R^d)$, $v=0$ \textit{a.e.} on $\Gamma_{\mathrm{D}}$,~$\mathrm{A}\mathrm{e}(v)\nn\in\mathrm{L}^{2}(\Gamma_{\mathrm{N}},\R^d)$ with $\sigma_{\nn}(v)=0$ \textit{a.e.} on $\Gamma_{\mathrm{N}}$ and $\sigma_{\tau}(v)(s)\in g_{0}(s)\partial ||\mathord{\cdot}_{\tau(s)}||(u(s))$ for almost all $s\in\Gamma_{\mathrm{N}}$.

\begin{myLem}\label{lemmeAnnexeSousdiff}
Let $u\in \HH^{1}_{\mathrm{D}}(\Omega,\R^d)$. Then

\begin{center}
    $\partial \Phi(0,\cdot)(u)=$ the set of solutions to Problem~\eqref{PbannexesousdiffDNT}.
\end{center}
\end{myLem}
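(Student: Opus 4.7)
The plan is to prove both inclusions of the claimed set equality, relying on the divergence formula together with a localization (Lebesgue point) argument in the spirit of Proposition~\ref{EquiSignorini}.

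For the direct inclusion, I would start from an arbitrary $v\in\partial\Phi(0,\cdot)(u)$, which by the definition of the convex subdifferential satisfies
$$
\int_{\Omega}\mathrm{A}\mathrm{e}(v):\mathrm{e}(w-u) \;\leq\; \int_{\Gamma_{\mathrm{N}}} g_{0}\left\| w_{\tau}\right\| - \int_{\Gamma_{\mathrm{N}}} g_{0}\left\| u_{\tau}\right\|, \qquad \forall w\in\HH^{1}_{\mathrm{D}}(\Omega,\R^d).
$$
First I test with $w = u\pm\varphi$ for $\varphi\in\MD(\Omega,\R^d)$: since the trace of $\varphi$ on $\Gamma$ vanishes, the right-hand side is zero and one obtains $-\mathrm{div}(\mathrm{A}\mathrm{e}(v))=0$ in $\MD'(\Omega,\R^d)$, hence in $\LL^{2}(\Omega,\R^d)$. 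The divergence formula (Proposition~\ref{div}) then converts the volume integral into a duality pairing on $\Gamma$, and using that $w-u$ lives in (a subspace of) $\HH^{1/2}_{00}(\Gamma_{\mathrm{N}},\R^d)$ together with the estimate $\bigl|\int_{\Gamma_{\mathrm{N}}} g_0(\|w_\tau\|-\|u_\tau\|)\bigr|\leq \|g_0\|_{\LL^2(\Gamma_\mathrm{N})}\|w-u\|_{\LL^2(\Gamma_\mathrm{N},\R^d)}$, the continuity/density argument from Proposition~\ref{EquiSignorini} yields $\mathrm{A}\mathrm{e}(v)\nn\in\LL^{2}(\Gamma_{\mathrm{N}},\R^d)$ and the pointwise trace inequality
$$
\int_{\Gamma_{\mathrm{N}}}\sigma_{\nn}(v)(w_{\nn}-u_{\nn}) + \int_{\Gamma_{\mathrm{N}}}\sigma_{\tau}(v)\cdot(w_{\tau}-u_{\tau}) \;\leq\; \int_{\Gamma_{\mathrm{N}}} g_{0}(\|w_{\tau}\|-\|u_{\tau}\|).
$$

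Next I extract the two boundary conditions. Testing with $w=u\pm\psi\nn$ where $\psi$ is any $\LL^2(\Gamma_\mathrm{N})$ function (extended by zero on $\Gamma_\mathrm{D}$) eliminates the tangential right-hand side and delivers $\sigma_{\nn}(v)=0$ a.e. on $\Gamma_\mathrm{N}$. For the tangential inclusion I localize at a Lebesgue point $s\in\Gamma_{\mathrm{N}}$ of the relevant $\LL^{1}$-functions by taking $w-u$ supported on a geodesic ball $B_\Gamma(s,\varepsilon)$ with arbitrary tangential direction $y\in\R^d$, dividing by $|B_\Gamma(s,\varepsilon)|$ and letting $\varepsilon\to 0^{+}$. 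This yields, for a.e.\ $s$ and every $y\in\R^d$,
$$
\sigma_{\tau}(v)(s)\cdot\bigl(y_{\tau(s)}-u(s)_{\tau(s)}\bigr) \;\leq\; g_{0}(s)\bigl(\|y_{\tau(s)}\|-\|u(s)_{\tau(s)}\|\bigr),
$$
which is exactly the defining inequality of $g_{0}(s)\partial\|\cdot_{\tau(s)}\|(u(s))$, so $v$ solves~\eqref{PbannexesousdiffDNT}.

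For the reverse inclusion, I take $v$ a solution to~\eqref{PbannexesousdiffDNT} and run the computation backward. Since $-\mathrm{div}(\mathrm{A}\mathrm{e}(v))=0$ in $\LL^{2}(\Omega,\R^d)$ and $\mathrm{A}\mathrm{e}(v)\nn\in\LL^{2}(\Gamma_{\mathrm{N}},\R^d)$, the divergence formula gives, for every $w\in\HH^{1}_{\mathrm{D}}(\Omega,\R^d)$,
$$
\dual{v}{w-u}_{\HH^{1}_{\mathrm{D}}(\Omega,\R^d)} \;=\; \int_{\Gamma_{\mathrm{N}}}\sigma_{\nn}(v)(w_{\nn}-u_{\nn}) + \int_{\Gamma_{\mathrm{N}}}\sigma_{\tau}(v)\cdot(w_{\tau}-u_{\tau}).
$$
The first term vanishes since $\sigma_\nn(v)=0$, while the pointwise subgradient inclusion $\sigma_{\tau}(v)(s)\in g_{0}(s)\partial\|\cdot_{\tau(s)}\|(u(s))$ gives $\sigma_{\tau}(v)(s)\cdot(w_{\tau}(s)-u_{\tau}(s))\leq g_{0}(s)(\|w_{\tau}(s)\|-\|u_{\tau}(s)\|)$ for a.e.\ $s$; integrating we obtain
$$
\dual{v}{w-u}_{\HH^{1}_{\mathrm{D}}(\Omega,\R^d)} \;\leq\; \Phi(0,w)-\Phi(0,u), \qquad \forall w\in\HH^{1}_{\mathrm{D}}(\Omega,\R^d),
$$
that is, $v\in\partial\Phi(0,\cdot)(u)$.

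The main obstacle is the first half, specifically upgrading the abstract duality pairing $\langle\mathrm{A}\mathrm{e}(v)\nn,\cdot\rangle_{\HH^{-1/2}\times\HH^{1/2}}$ to a genuine $\LL^{2}$-function and then distilling the \emph{a.e.\ pointwise} subdifferential inclusion out of the \emph{integrated} inequality. The $\LL^{2}$-regularity of the normal stress relies on the $\LL^{2}(\Gamma_{\mathrm{N}})$-bound on the right-hand side and the density of $\HH^{1/2}_{00}(\Gamma_{\mathrm{N}},\R^d)$ in $\LL^{2}(\Gamma_{\mathrm{N}},\R^d)$, whereas the pointwise tangential inclusion requires the localization argument at Lebesgue points together with the assumption that almost every point of $\Gamma_{\mathrm{N}}$ belongs to $\mathrm{int}_{\Gamma}(\Gamma_{\mathrm{N}})$ so that the geodesic balls are admissible; both are the vectorial counterparts of the techniques already illustrated in Proposition~\ref{EquiSignorini}.
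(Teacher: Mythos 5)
Your proposal is correct and follows essentially the same route as the paper's proof: testing with $w=u\pm\varphi$ for $\varphi\in\MD(\Omega,\R^d)$ to get $-\mathrm{div}(\mathrm{A}\mathrm{e}(v))=0$, applying the divergence formula, identifying $\mathrm{A}\mathrm{e}(v)\nn$ with an $\LL^2$ function via the $\LL^2$-bound (Proposition~\ref{Ident}), testing with $w=u\pm\psi\nn$ to kill the normal stress, and a Lebesgue-point localization for the pointwise tangential subdifferential inclusion, with the converse obtained by integrating the pointwise inequality. The only cosmetic difference is the order in which the two inclusions are presented.
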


\begin{proof}
Let $u\in \HH^{1}_{\mathrm{D}}(\Omega,\R^d)$ and let us prove the two inclusions. Firstly, let $v\in\HH^{1}(\Omega,\R^d)$ be a solution to Problem~\eqref{PbannexesousdiffDNT}. Then $v\in\HH^{1}_{\mathrm{D}}(\Omega,\R^d)$, $\mathrm{A}\mathrm{e}(v)\nn\in\mathrm{L}^{2}(\Gamma_{\mathrm{N}},\R^d)$ with~$\sigma_{\tau}(v)(s)\in g_{0}(s)\partial ||\mathord{\cdot}_{\tau(s)}||(u(s))$ for almost all $s\in\Gamma_{\mathrm{N}}$. Hence one has
$$
\displaystyle \sigma_{\tau}(v)(s)\cdot\left(w_\tau(s)-u_{\tau}(s)\right)\leq g_{0}(s)(\left\|w_\tau(s)\right\|-\left\|u_\tau(s)\right\|),
$$
for all $w\in \HH^{1}_{\mathrm{D}}(\Omega,\R^d)$ and for almost all $s\in\Gamma_{\mathrm{N}}$. It follows that
$$
\displaystyle\int_{\Gamma_{\mathrm{N}}}\displaystyle \sigma_{\tau}(v)\cdot\left(w_\tau-u_{\tau}\right)\leq \int_{\Gamma_{\mathrm{N}}}g_{0}\left\|w_\tau\right\|-\int_{\Gamma_{\mathrm{N}}}g_{0}\left\|u_\tau\right\|,
$$
for all $w\in\HH^{1}_{\mathrm{D}}(\Omega,\R^d)$. Moreover $-\mathrm{div}(\mathrm{A}\mathrm{e}(v))=0$ in~$\mathcal{D}'
(\Omega,\R^d)$, thus it holds $-\mathrm{div}(\mathrm{A}\mathrm{e}(v))=0$ in~$\LL^{2}(\Omega,\R^d)$. Hence, from divergence formula (see Proposition~\ref{div}), one gets
$$
\dual{v}{w-u}_{\HH^{1}_{\mathrm{D}}(\Omega,\R^d)}=\dual{\mathrm{A}\mathrm{e}(v)\nn}{w-u}_{\HH^{-1/2}_{00}(\Gamma_{\mathrm{N}},\R^d)\times \HH^{1/2}_{00}(\Gamma_{\mathrm{N}},\R^d)},
$$
for all $w\in\HH^{1}_{\mathrm{D}}(\Omega,\R^d)$. Since $\mathrm{A}\mathrm{e}(v)\nn\in\mathrm{L}^{2}(\Gamma_{\mathrm{N}},\R^d)$ and $\sigma_{\nn}(v)=0$ \textit{a.e.} on $\Gamma_{\mathrm{N}}$, one deduces that
$$
\dual{v}{w-u}_{\HH^{1}_{\mathrm{D}}(\Omega,\R^d)}=\int_{\Gamma_{\mathrm{N}}}\sigma_\tau(v)\cdot\left(w_\tau-u_\tau\right),
$$
for all $w\in\HH^{1}_{\mathrm{D}}(\Omega,\R^d)$. Therefore it follows that
$$
\dual{v}{w-u}_{\HH^{1}_{\mathrm{D}}(\Omega,\R^d)}\leq
\int_{\Gamma_{\mathrm{N}}}g_{0}\left\|w_\tau\right\|-\int_{\Gamma_{\mathrm{N}}}g_{0}\left\|u_\tau\right\|,
$$
for all $w\in\HH^{1}_{\mathrm{D}}(\Omega,\R^d)$. Thus $v\in\partial\Phi(0,\cdot)(u)$ and the first inclusion is proved. Conversely let~$v\in\partial\Phi(0,\cdot)(u)$. Then one has
\begin{equation}\label{inegalitesousdiffDNT}
\displaystyle\dual{v}{w-u}_{\HH^{1}_{\mathrm{D}}(\Omega,\R^d)}\leq\int_{\Gamma_{\mathrm{N}}}g_{0}\left\|w_\tau\right\|-\int_{\Gamma_{\mathrm{N}}}g_{0}\left\|u_\tau\right\|,
\end{equation}
for all $w\in\HH^{1}_{\mathrm{D}}(\Omega,\R^d)$. Considering the function $w:=u\pm\psi\in\HH^{1}_{\mathrm{D}}(\Omega,\R^d)$ with any function~$\psi\in\MD(\Omega,\R^d)$, one deduces from Inequality~\eqref{inegalitesousdiffDNT} that $-\mathrm{div}(\mathrm{A}\mathrm{e}(v))=0$ in~$\MD'(\Omega,\R^d)$, thus it holds $-\mathrm{div}(\mathrm{A}\mathrm{e}(v))=0$ in $\LL^{2}(\Omega,\R^d)$. Hence, from divergence formula (see Proposition~\ref{div}) and Inequality~\eqref{inegalitesousdiffDNT}, it follows that
$$
\dual{\mathrm{A}\mathrm{e}(v)\nn}{w-u}_{\HH^{-1/2}_{00}(\Gamma_{\mathrm{N}},\R^d)\times \HH^{1/2}_{00}(\Gamma_{\mathrm{N}},\R^d)}\leq\int_{\Gamma_{\mathrm{N}}}g_{0}\left\|w_\tau\right\|-\int_{\Gamma_{\mathrm{N}}}g_{0}\left\|u_\tau\right\|,
$$
for all $w\in\HH^{1}_{\mathrm{D}}(\Omega,\R^d)$, and thus also for all $w\in\HH^{1/2}_{00}(\Gamma_{\mathrm{N}},\R^d)$. Now, by considering~$w:=u+\varphi\in\HH^{1/2}_{00}(\Gamma_{\mathrm{N}},\R^d)$, for any $\varphi\in\HH^{1/2}_{00}(\Gamma_{\mathrm{N}},\R^d)$, one gets
$$
\dual{\mathrm{A}\mathrm{e}(v)\nn}{\varphi}_{\HH^{-1/2}_{00}(\Gamma_{\mathrm{N}},\R^d)\times \HH^{1/2}_{00}(\Gamma_{\mathrm{N}},\R^d)}\leq \int_{\Gamma_{\mathrm{N}}}g_{0}\left\|\varphi_\tau\right\|\leq\left \| g_{0} \right \|_{\LL^{2}(\Gamma_{\mathrm{N}})}\left \| \varphi \right \|_{\LL^{2}(\Gamma_{\mathrm{N}},\R^d)},
$$
for all $\varphi\in\HH^{1/2}_{00}(\Gamma_{\mathrm{N}},\R^d)$.
From Proposition~\ref{Ident}, one deduces that $\mathrm{A}\mathrm{e}(v)\nn\in \mathrm{L}^{2}(\Gamma_{\mathrm{N}},\R^d)$ and also that
\begin{multline}
\label{inegalitesousdiffDNTsensi}
\displaystyle\int_{\Gamma_{\mathrm{N}}}\mathrm{A}\mathrm{e}(v)\nn\cdot \left(w-u\right)=\int_{\Gamma_{\mathrm{N}}}\sigma_{\tau}(v)\cdot\left(w_\tau-u_\tau\right)+\int_{\Gamma_{\mathrm{N}}}\sigma_\nn(v)\left(w_{\nn}-u_{\nn}\right)\\\leq
\int_{\Gamma_{\mathrm{N}}}g_{0}\left\|w_\tau\right\|-\int_{\Gamma_{\mathrm{N}}}g_{0}\left\|u_\tau\right\|, 
\end{multline}
for all $w\in\HH^{1/2}_{00}(\Gamma_{\mathrm{N}},\R^d)$,
and thus for all $w\in\LL^{2}(\Gamma_{\mathrm{N}},\R^d)$ by density.
By considering~$w:=u\pm\psi\nn\in\LL^{2}(\Gamma_{\mathrm{N}},\R^d)$ in Inequality~\eqref{inegalitesousdiffDNTsensi}, for any $\psi\in\LL^{2}(\Gamma_{\mathrm{N}})$, one gets
$$
\int_{\Gamma_{\mathrm{N}}}\sigma_\nn(v)\psi=0.
$$
Therefore $\sigma_{\nn}(v)=0$ \textit{a.e.} on $\Gamma_{\mathrm{N}}$ and Inequality~\eqref{inegalitesousdiffDNTsensi} becomes
\begin{equation}\label{secondeinegalite}
\int_{\Gamma_{\mathrm{N}}}\sigma_{\tau}(v)\cdot\left(w_\tau-u_\tau\right)\leq
\int_{\Gamma_{\mathrm{N}}}g_{0}\left\|w_\tau\right\|-\int_{\Gamma_{\mathrm{N}}}g_{0}\left\|u_\tau\right\|, 
\end{equation}
for all $w\in\LL^{2}(\Gamma_{\mathrm{N}},\R^d)$. Now let $s_{0}\in\Gamma_{\mathrm{N}}$ be a Lebesgue point of~$(\sigma_{\tau}(v))_{i}\in\LL^{2}(\Gamma_{\mathrm{N}})$ for $i\in\left[\left[1,d\right] \right]$,~$\sigma_\tau(v)\cdot u_\tau\in\LL^1(\Gamma_{\mathrm{N}})$, $g_{0}\in\LL^{2}(\Gamma_{\mathrm{N}})$ and of~$g_{0}\left\|u_\tau\right\|\in\LL^{1}(\Gamma_{\mathrm{N}})$, such that $s_{0}\in\mathrm{int}_{\Gamma}({\Gamma_{\mathrm{N}}})$. Let us consider the function $w\in \mathrm{L}^{2}(\Gamma_{\mathrm{N}},\R^d)$ defined by
$$w:=
\left\{
\begin{array}{rl}
x   & \text{ on } B_{\Gamma}(s_{0},\varepsilon) , \\
u   & \text{ on } \Gamma_{\mathrm{N}}\textbackslash B_{\Gamma}(s_{0},\varepsilon) ,
\end{array}
\right.
$$
with $x\in\R^{d}$ and $\varepsilon>0$ such that $B_{\Gamma}(s_{0},\varepsilon)\subset\Gamma_{\mathrm{N}}$. Then one has from Inequality~\eqref{secondeinegalite}
$$
\displaystyle \frac{1}{\left|B_{\Gamma}(s_{0},\varepsilon)\right|}\int_{B_{\Gamma}(s_{0},\varepsilon)}\sigma_{\tau}(v)\cdot(x_\tau-u_\tau)\leq\frac{1}{\left|B_{\Gamma}(s_{0},\varepsilon)\right|}\int_{B_{\Gamma}(s_{0},\varepsilon)}g_{0}\left\|x_\tau\right\|-\frac{1}{\left|B_{\Gamma}(s_{0},\varepsilon)\right|}\int_{B_{\Gamma}(s_{0},\varepsilon)}g_{0}\left\|u_\tau\right\|.
$$
The map $s\in\Gamma\mapsto \left\|x_{\tau(s)}\right\|\in\mathbb{R}_{+}$ is continuous since $\nn\in\mathcal{C}^{0}(\Gamma)$, thus $s_{0}$ is a Lebesgue point of~$g_{0}\left\|x_{\tau}\right\|\in\LL^{2}(\Gamma_{\mathrm{N}})$, then $\sigma_{\tau}(v)(s_{0})\cdot(x_{\tau(s_{0})}-u_\tau(s_{0}))\leq g_{0}(s_{0})\left\|x_{\tau(s_0)}\right\|-g_{0}(s_{0})\left\|u_\tau(s_{0})\right\|$ by letting~$\varepsilon\rightarrow0^{+}$. This inequality is true for any~$x \in \R^{d}$, therefore $\sigma_{\tau}(v)(s_{0})\in g_{0}(s_{0})\partial ||\mathord{\cdot}_{\tau(s_{0})}||(u(s_{0}))$.
Moreover, almost every point of $\Gamma_{\mathrm{N}}$ are in~$\mathrm{int}_{\Gamma}({\Gamma_{\mathrm{N}}})$ and are Lesbegue points of $(\sigma_{\tau}(v))_{i}\in\LL^{2}(\Gamma_{\mathrm{N}})$ for $i\in\left[\left[1,d\right] \right]$, $\sigma_\tau(v)\cdot u_\tau\in\LL^1(\Gamma_{\mathrm{N}})$, $g_{0}\in\LL^{2}(\Gamma_{\mathrm{N}})$ and of~$g_{0}\left\|u_\tau\right\|\in\LL^{1}(\Gamma_{\mathrm{N}})$, hence one deduces 
$$
\sigma_{\tau}(v)(s)\in g_{0}(s)\partial ||\mathord{\cdot}_{\tau(s)}||(u(s)),
$$
for almost all $s\in\Gamma_{\mathrm{N}}$, and this proves the second inclusion.
\end{proof}

\begin{myRem}\normalfont\label{regularityofn}
    As one can see in the proof of Lemma~\ref{lemmeAnnexeSousdiff}, the assumption that $\Gamma$ is of class~$\mathcal{C}^1$ is only used to ensure that $\nn\in \mathcal{C}^{0}(\Gamma)$, and thus to characterize the convex subdifferential of~$\Phi(0,\cdot)$ as the set of solutions to Problem~\eqref{PbannexesousdiffDNT}.
\end{myRem}

Since the twice epi-differentiability is defined using the second-order difference quotient functions, let us compute the second-order difference quotient functions of $\Phi$ at $u\in\HH^{1}_{\mathrm{D}}(\Omega,\R^d)$ for~$v\in\partial\Phi(0,\cdot)(u)$.

\begin{myProp}\label{epidiffoffunctionG}
For all $t>0$, all $u\in \HH^{1}_{\mathrm{D}}(\Omega,\R^d)$ and all $v\in\partial\Phi(0,\cdot)(u)$, it holds that
\begin{equation}\label{Delta2}
      \displaystyle\Delta_{t}^{2}\Phi(u\mid v)(w)=\int_{\Gamma_{\mathrm{N}}}\Delta_{t}^{2}G(s)(u(s)\mid\sigma_{\tau}(v)(s))(w(s)) \, \mathrm{d}s,
\end{equation}
for all $w\in \HH^{1}_{\mathrm{D}}(\Omega,\R^d)$, where, for almost all $s\in\Gamma_{\mathrm{N}}$, $\Delta_{t}^{2}G(s)(u(s)\mid\sigma_{\tau}(v)(s))$ stands for the second-order difference quotient function of $G(s)$ at $u(s)\in\R^{d}$ for $\sigma_{\tau}(v)(s)\in g_{0}(s)\partial ||\mathord{\cdot}_{\tau(s)}||(u(s))$, with~$G(s)$ defined by
$$ 
\fonction{G(s)}{\mathbb{R}_{+}\times\mathbb{R}^{d}}{\R}{(t,x)}{G(s)(t,x):=g_{t}(s)\left\|x_{\tau(s)}\right\|.}
$$
\end{myProp}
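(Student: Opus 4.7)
The plan is to unfold the definition of the parameterized second-order difference quotient (Definition~\ref{epidiffpara}) applied to $\Phi$, then reorganize the expression so that the integrand over $\Gamma_{\mathrm{N}}$ is recognized as the pointwise second-order difference quotient of $G(s)$. By definition one has
$$
\Delta_{t}^{2}\Phi(u\mid v)(w)=\frac{2}{t^{2}}\Bigl[\Phi(t,u+tw)-\Phi(0,u)-t\dual{v}{w}_{\HH^{1}_{\mathrm{D}}(\Omega,\R^{d})}\Bigr],
$$
and, using the explicit expression~\eqref{fonctionnelledeTrescaparacas2},
$$
\Phi(t,u+tw)-\Phi(0,u)=\int_{\Gamma_{\mathrm{N}}}\!\Bigl(g_{t}(s)\left\|(u(s)+tw(s))_{\tau(s)}\right\|-g_{0}(s)\left\|u(s)_{\tau(s)}\right\|\Bigr)\d s.
$$
Thus the whole task is reduced to rewriting the linear term $t\dual{v}{w}_{\HH^{1}_{\mathrm{D}}(\Omega,\R^{d})}$ as a boundary integral against $\sigma_{\tau}(v)$.

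The key step is precisely this rewriting, which is where Lemma~\ref{lemmeAnnexeSousdiff} enters. Since $v\in\partial\Phi(0,\cdot)(u)$, Lemma~\ref{lemmeAnnexeSousdiff} provides $-\mathrm{div}(\mathrm{A}\mathrm{e}(v))=0$ in $\LL^{2}(\Omega,\R^{d})$, $v=0$ \textit{a.e.} on $\Gamma_{\mathrm{D}}$, $\mathrm{A}\mathrm{e}(v)\nn\in\LL^{2}(\Gamma_{\mathrm{N}},\R^{d})$, $\sigma_{\nn}(v)=0$ \textit{a.e.} on $\Gamma_{\mathrm{N}}$ and $\sigma_{\tau}(v)(s)\in g_{0}(s)\partial\left\|\cdot_{\tau(s)}\right\|(u(s))$ for almost all $s\in\Gamma_{\mathrm{N}}$. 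Applying the divergence formula (Proposition~\ref{div}) and using $w=0$ \textit{a.e.} on $\Gamma_{\mathrm{D}}$, one obtains
$$
\dual{v}{w}_{\HH^{1}_{\mathrm{D}}(\Omega,\R^{d})}=\int_{\Omega}\mathrm{A}\mathrm{e}(v):\mathrm{e}(w)=\int_{\Gamma_{\mathrm{N}}}\mathrm{A}\mathrm{e}(v)\nn\cdot w=\int_{\Gamma_{\mathrm{N}}}\sigma_{\tau}(v)\cdot w_{\tau}=\int_{\Gamma_{\mathrm{N}}}\sigma_{\tau}(v)(s)\cdot w(s)\,\d s,
$$
where the last equality uses that $\sigma_{\tau}(v)(s)$ is orthogonal to $\nn(s)$ so its scalar product with the normal component of $w(s)$ vanishes.

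Inserting this identity into the expression above and factoring the integrals over $\Gamma_{\mathrm{N}}$, the integrand becomes pointwise
$$
\frac{2}{t^{2}}\Bigl[g_{t}(s)\left\|(u(s)+tw(s))_{\tau(s)}\right\|-g_{0}(s)\left\|u(s)_{\tau(s)}\right\|-t\,\sigma_{\tau}(v)(s)\cdot w(s)\Bigr]
=\frac{2}{t^{2}}\Bigl[G(s)(t,u(s)+tw(s))-G(s)(0,u(s))-t\,\sigma_{\tau}(v)(s)\cdot w(s)\Bigr],
$$
which is exactly $\Delta_{t}^{2}G(s)(u(s)\mid\sigma_{\tau}(v)(s))(w(s))$, noting that the pairing is well posed since, by Lemma~\ref{lemmeAnnexeSousdiff}, $\sigma_{\tau}(v)(s)$ belongs to $g_{0}(s)\partial\left\|\cdot_{\tau(s)}\right\|(u(s))=\partial_{x}G(s)(0,\cdot)(u(s))$ for almost all $s\in\Gamma_{\mathrm{N}}$. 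This yields~\eqref{Delta2}.

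The main (and really only) obstacle is the subdifferential-based computation of $\dual{v}{w}_{\HH^{1}_{\mathrm{D}}(\Omega,\R^{d})}$: everything else is just substitution and algebraic rearrangement. This obstacle is already overcome by the characterization provided by Lemma~\ref{lemmeAnnexeSousdiff}, so the remainder of the proof is essentially bookkeeping.
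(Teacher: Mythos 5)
Your proof follows essentially the same route as the paper's: the whole content is the identity $\dual{v}{w}_{\HH^{1}_{\mathrm{D}}(\Omega,\R^{d})}=\int_{\Gamma_{\mathrm{N}}}\sigma_{\tau}(v)\cdot w$, obtained from Lemma~\ref{lemmeAnnexeSousdiff} together with the divergence formula and the facts $\sigma_{\nn}(v)=0$ and $w=0$ on $\Gamma_{\mathrm{D}}$, after which the equality of integrands is immediate. That key step is carried out correctly. However, you misquote the second-order difference quotient of Definition~\ref{epidiffpara} in two places. First, the middle term must be $\Phi(t,u)$, not $\Phi(0,u)$: the parameterized quotient is $\bigl(\Phi(t,u+tw)-\Phi(t,u)-t\dual{v}{w}\bigr)/t^{2}$, so the integrand should contain $g_{t}(s)\left\|u_{\tau}(s)\right\|$ rather than $g_{0}(s)\left\|u_{\tau}(s)\right\|$, and likewise $\Delta_{t}^{2}G(s)$ subtracts $G(s)(t,u(s))$, not $G(s)(0,u(s))$. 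Second, the paper's convention has denominator $t^{2}$ with no factor $2$ (this is why Remark~\ref{diffsecond} yields $\tfrac{1}{2}\mathrm{D}^{2}\phi$ as the epi-derivative of a smooth function), so your prefactor $2/t^{2}$ is off by a factor of $2$. Because you apply both misquotations in parallel to $\Phi$ and to $G(s)$, the equality you display is internally consistent, but as written neither side is the object named in the statement. Once the definitions are quoted correctly the argument coincides with the paper's proof; no new idea is missing.
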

\begin{myRem}\normalfont
Note that, for almost all $s\in\Gamma_{\mathrm{N}}$ and all $t\geq 0$, $G(s)(t,\cdot):=g_{t}(s)\left\|\cdot_{\tau(s)}\right\|$ is a proper lower semi-continuous convex function on $\mathbb{R}^{d}$. Moreover, since $g_{0}>0$ \textit{a.e.} on~$\Gamma_{\mathrm{N}}$, it follows that
$$
\partial\left[G(s)(0,\mathord{\cdot})\right](x)=g_{0}(s)\partial{||\mathord{\cdot}_{\tau(s)}||}(x),
$$
for all $x\in\R^{d}$ and for almost all $s\in\Gamma_{\mathrm{N}}$.
\end{myRem}

\begin{proof}[Proof of Proposition~\ref{epidiffoffunctionG}]
Let $t>0$, $u\in \HH^{1}_{\mathrm{D}}(\Omega,\R^d)$ and $v\in\partial\Phi(0,\cdot)(u)$. From Lemma~\ref{lemmeAnnexeSousdiff} and the divergence formula (see Proposition~\ref{div}), one deduces that
$$
\displaystyle \dual{ v}{w}_{\HH^{1}_{\mathrm{D}}(\Omega,\R^d)}=\int_{\Gamma_{\mathrm{N}}}\sigma_{\tau}(v)\cdot w,
$$
for all $w\in\HH^{1}_{\mathrm{D}}(\Omega,\R^d)$. It follows that
$$
        \displaystyle \Delta_{t}^{2}\Phi(u\mid v)(w)=\int_{\Gamma_{\mathrm{N}}}\frac{g_{t}(s)\left\|u_\tau(s)+t w_\tau(s)\right\|-g_{t}(s)\left\|u_\tau(s)\right\|-t\sigma_{\tau}(v)(s)\cdot w(s)}{t^{2}} \, \mathrm{d}s,
$$
for all $w\in\HH^{1}_{\mathrm{D}}(\Omega,\R^d)$. Moreover, since $\sigma_{\tau}(v)(s)\in g_{0}(s)\partial{||\mathord{\cdot}_{\tau(s)}||}(u(s))$ for almost all $s\in\Gamma_{\mathrm{N}}$, one deduces that
$$
\displaystyle \Delta_{t}^{2}\Phi(u\mid v)(w)=\int_{\Gamma_{\mathrm{N}}}\Delta_{t}^{2}G(s)(u(s)\mid\sigma_{\tau}(v)(s))(w(s)) \, \mathrm{d}s,
$$
for all $w\in \HH^{1}_{\mathrm{D}}(\Omega,\R^d)$, which concludes the proof.
\end{proof}

From Proposition~\ref{epidiffoffunctionG}, it is clear that the twice epi-differentiability of the parameterized Tresca friction functional~$\Phi$ is related to the twice epi-differentiability of the parameterized function $G(s)$. Hence we have to compute the second-order epi-derivative of $G(s)$ for almost all~$s\in\Gamma_{\mathrm{N}}$. To this aim, let us start with the investigation of the twice epi-differentiability of the tangential norm map. Since, for almost all~$s\in\Gamma_{\mathrm{N}}$, $\left\|\cdot_{\tau(s)}\right\|=\xi_{\mathrm{\overline{\mathrm{B}(0,1)}\cap\left(\R\nn(s)\right)^{\perp}}}$, where $\xi_{\mathrm{\overline{\mathrm{B}(0,1)}\cap\left(\R\nn(s)\right)^{\perp}}}$ is the support function of $\mathrm{\overline{\mathrm{B}(0,1)}\cap\left(\R\nn(s)\right)^{\perp}}$ (see Definition~\ref{supportfunc}), we have to extend a result proved in~\cite[Example 2.7 p.286]{DO} about the twice epi-differentiability of a support function.

\begin{myProp}\label{epitangnorm}
Let $\xi_{\mathrm{C}}$ be the support function of a nonempty closed convex subset $\mathrm{C}$ of~$\R^d$.
Then, for all $x\in\mathrm{C}^{\perp}$, one has
$
\partial{\xi_{\mathrm{C}}}(x)=\mathrm{C}
$
and $\xi_{\mathrm{C}}$ is twice epi-differentiable at $x$ for any $y\in\mathrm{C}$ with
$$
\mathrm{d}_{e}^{2}\xi_{\mathrm{C}}(x \mid y)=\iota_{\mathrm{N}_{\mathrm{C}}(y)},
$$
where $\mathrm{N}_{\mathrm{C}}(y):=\left\{z\in\R^d\mid z\cdot(c-y)\leq0, \forall c\in\mathrm{C}\right\}$ is the \textit{normal cone} to $\mathrm{C}$ at $y\in\mathrm{C}$ and $\iota_{\mathrm{N}_{\mathrm{C}}(y)}$ stands for the indicator function of $\mathrm{N}_{\mathrm{C}}(y)$ defined by $\iota_{\mathrm{N}_{\mathrm{C}}(y)}(z):=0$ if $z\in\mathrm{N}_{\mathrm{C}}(y)$, and~$\iota_{\mathrm{N}_{\mathrm{C}}(y)}(z):=+\infty$ otherwise.
\end{myProp}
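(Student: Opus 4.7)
The plan is to exploit the positive homogeneity of $\xi_{\mathrm{C}}$ together with the hypothesis $x\in\mathrm{C}^{\perp}$, which reduces the second-order difference quotients to a particularly simple form.

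First I would establish the identity $\partial\xi_{\mathrm{C}}(x)=\mathrm{C}$. Since $x\in\mathrm{C}^{\perp}$, one has $c\cdot x=0$ for every $c\in\mathrm{C}$ and thus $\xi_{\mathrm{C}}(x)=0$. Then, for any $y\in\mathrm{C}$ and any $z\in\R^d$, the chain of inequalities $\xi_{\mathrm{C}}(z)\geq y\cdot z=\xi_{\mathrm{C}}(x)+y\cdot(z-x)$ (the last equality using $y\cdot x=0$) shows that $\mathrm{C}\subseteq\partial\xi_{\mathrm{C}}(x)$. For the reverse inclusion, I would appeal to the Fenchel identity $\xi_{\mathrm{C}}^{*}=\iota_{\mathrm{C}}$, valid for closed convex $\mathrm{C}$, together with the biconjugate characterization $y\in\partial\xi_{\mathrm{C}}(x)\Leftrightarrow x\in\partial\iota_{\mathrm{C}}(y)$, which forces $y\in\mathrm{C}$.

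Next, I fix $y\in\mathrm{C}=\partial\xi_{\mathrm{C}}(x)$ and compute the second-order difference quotient
\begin{equation*}
\Delta_t^2\xi_{\mathrm{C}}(x\mid y)(w)=\frac{\xi_{\mathrm{C}}(x+tw)-\xi_{\mathrm{C}}(x)-t\,y\cdot w}{t^{2}}, \qquad t>0,\;w\in\R^d.
\end{equation*}
The key calculation uses positive homogeneity of $\xi_{\mathrm{C}}$ together with $c\cdot x=0$ for every $c\in\mathrm{C}$, which yields $\xi_{\mathrm{C}}(x+tw)=\sup_{c\in\mathrm{C}}t\,c\cdot w=t\,\xi_{\mathrm{C}}(w)$, so that the difference quotient collapses to
\begin{equation*}
\Delta_t^2\xi_{\mathrm{C}}(x\mid y)(w)=\frac{\xi_{\mathrm{C}}(w)-y\cdot w}{t}.
\end{equation*}
Crucially, the map $w\mapsto\xi_{\mathrm{C}}(w)-y\cdot w$ is nonnegative (as $y\in\mathrm{C}$), convex, and lower semi-continuous (being a supremum of continuous linear forms minus a linear form); by the very definition of the normal cone through the support function, it vanishes exactly on $\mathrm{N}_{\mathrm{C}}(y)$.

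The remaining step, which I expect to be the main (yet mild) obstacle, is to verify that $\tfrac{1}{t}[\xi_{\mathrm{C}}(\cdot)-y\cdot(\cdot)]$ epi-converges to $\iota_{\mathrm{N}_{\mathrm{C}}(y)}$ as $t\to 0^{+}$. For the $\limsup$ inequality I would take the constant recovery sequence $w_{t}:=w$: if $w\in\mathrm{N}_{\mathrm{C}}(y)$ the quotient is identically zero, while otherwise $\iota_{\mathrm{N}_{\mathrm{C}}(y)}(w)=+\infty$ trivially dominates. For the $\liminf$ inequality, I would consider an arbitrary sequence $w_{t}\to w$ and assume by contradiction that $\liminf_{t\to 0^{+}}\tfrac{1}{t}[\xi_{\mathrm{C}}(w_{t})-y\cdot w_{t}]$ is finite; then along a subsequence $\xi_{\mathrm{C}}(w_{t_{k}})-y\cdot w_{t_{k}}\to 0$, and the lower semi-continuity of $\xi_{\mathrm{C}}-y\cdot$ yields $\xi_{\mathrm{C}}(w)-y\cdot w\leq 0$, which combined with nonnegativity forces $w\in\mathrm{N}_{\mathrm{C}}(y)$. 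This gives $\mathrm{d}_{e}^{2}\xi_{\mathrm{C}}(x\mid y)=\iota_{\mathrm{N}_{\mathrm{C}}(y)}$ and concludes the argument.
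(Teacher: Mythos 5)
Your proposal is correct and follows essentially the same route as the paper's proof: both use $x\in\mathrm{C}^{\perp}$ and positive homogeneity to collapse the second-order quotient to $\tfrac{1}{t}\left[\xi_{\mathrm{C}}(w)-y\cdot w\right]$, identify the zero set of $\xi_{\mathrm{C}}-y\cdot(\cdot)$ with $\mathrm{N}_{\mathrm{C}}(y)$, and verify Mosco epi-convergence via a constant recovery sequence and a lower semi-continuity argument (yours by contradiction, the paper's by a direct quantitative bound — a cosmetic difference, and in $\R^d$ weak and strong convergence coincide so your use of strongly convergent sequences is harmless). The subdifferential identity is also obtained the same way, via the conjugacy $\xi_{\mathrm{C}}^{*}=\iota_{\mathrm{C}}$, which the paper simply cites.
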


\begin{proof}
Let $x\in\mathrm{C}^{\perp}$. From~\cite[Lesson E]{URRU}, it holds that: 
\begin{enumerate}[label=(\roman*)]
    \item $y\in\partial{\xi_{\mathrm{C}}}(x)\Leftrightarrow x\in\partial{\iota_\mathrm{C}}(y)\Leftrightarrow y\in\mathrm{C}$;
    \item if $y\in\mathrm{C}$, then $z\in\mathrm{N}_{\mathrm{C}}(y)\Leftrightarrow \xi_{\mathrm{C}}(z)=z\cdot y$.
\end{enumerate}
From the first item one deduces that~$\partial{\xi_{\mathrm{C}}}(x)=\mathrm{C}$.
Let~$y\in\mathrm{C}$ and let us prove that $h_\mathrm{C}$ is twice epi-differentiable at~$x$ for $y$. To this aim we use Proposition~\ref{caractMosco}. Consider $z\in\mathrm{N}_{\mathrm{C}}(y)$ and thus~$\xi_{\mathrm{C}}(z)=y\cdot z$. By considering the sequence $z_{t}:=z$ for all $t>0$, one gets
\begin{multline*}
\displaystyle\delta_{t}^{2}\xi_{\mathrm{C}}(x \mid y)(z_{t})=\frac{\xi_{\mathrm{C}}(x+tz)-\xi_{\mathrm{C}}(x)-t y\cdot z}{t^{2}}= \\
\frac{\sup_{c\in\mathrm{C}}\left(x+tz \right) \cdot c-t y\cdot z}{t^{2}}=\frac{\xi_{\mathrm{C}}(z)-\xi_{\mathrm{C}}(z)}{t}=0.
\end{multline*}
Moreover, since $\displaystyle\delta_{t}^{2}\xi_{\mathrm{C}}(x \mid y)(w)\geq0$ for all $w\in\R^d$, one deduces that
$\mathrm{d}_{e}^{2}\xi_{\mathrm{C}}(x \mid y)(z)=0$.
Now consider~$z\notin\mathrm{N}_{\mathrm{C}}(y)$. There exists $c_{0}\in\mathrm{C}$ such that $z\cdot c_{0}>z\cdot y$, thus $\xi_{\mathrm{C}}(z)>z \cdot y$. Consider any sequence $\left(z_{t}\right)_{t>0}\rightarrow z$. Since $\xi_{\mathrm{C}}$ is lower semi-continuous one has
$$
\mathrm{lim}\inf \xi_{\mathrm{C}}(z_t)\geq \xi_{\mathrm{C}}(z) \quad \text{and} \quad y\cdot z_t{\longrightarrow} y\cdot z,
$$
when $t\rightarrow 0^{+}$. Therefore there exists $\eps>0$ such that
$$
\xi_{\mathrm{C}}(z_t)\geq \xi_{\mathrm{C}}(z) -\frac{\xi_{\mathrm{C}}(z)-y\cdot z}{4} \quad \text{and} \quad -y\cdot z_t\geq-y\cdot z-\frac{\xi_{\mathrm{C}}(z)-y\cdot z}{4},
$$
for all $t\leq\eps$, and thus
$$
\displaystyle\delta_{t}^{2}\xi_{\mathrm{C}}(x \mid y)(z_{t})=\frac{\xi_{\mathrm{C}}(z_{t})-y \cdot z_{t}}{t}\geq\frac{\xi_{\mathrm{C}}(z)-y \cdot z}{2t}{\longrightarrow}+\infty,
$$
when $t\rightarrow 0^{+}$. Hence $\mathrm{d}_{e}^{2}\xi_{\mathrm{C}}(x \mid y)(z)=+\infty
$ which concludes the proof.
\end{proof}

\begin{myRem}\normalfont
    Note that if we consider a general real Hilbert space~$(\mathcal{H}, \dual{\cdot}{\cdot}_{\mathcal{H}})$, then Proposition~\ref{epitangnorm} is still true since the support function of a nonempty closed convex subset of $\mathcal{H}$ is convex and lower semi-continuous, thus weakly lower semi-continuous.
\end{myRem}

\begin{myLem}\label{epinormtang}
For all $s\in\Gamma_{\mathrm{N}}$, the map $\left\|\cdot_{\tau(s)}\right\|$ is twice epi-differentiable at any $x\in\R^{d}$ for any~$y\in\partial{\left\|\cdot_{\tau(s)}\right\|}(x)$ and its second-order epi-derivative is given by
\begin{equation*}
\mathrm{d}_{e}^{2}\left\|\cdot_{\tau(s)}\right\|(x \mid y)(z) =
\left\{
\begin{array}{lcll}
\frac{1}{2\left\|x_{\tau(s)}\right\|}\left(\left\|z_{\tau(s)}\right\|^{2}-\left|z_{\tau(s)}\cdot\frac{x_{\tau(s)}}{\left\|x_{\tau(s)}\right\|}\right|^2\right)	&   & \text{ if } x_{\tau(s)}\neq0 , \\
\iota_{\mathrm{N}_{\overline{\mathrm{B}(0,1)}\cap\left(\R\nn(s)\right)^{\perp}}(y)}(z)	&   & \text{ if } x_{\tau(s)}=0,
\end{array}
\right.
\end{equation*}
for all $z\in\R^{d}$, where $\mathrm{N}_{\overline{\mathrm{B}(0,1)}\cap\left(\R\nn(s)\right)^{\perp}}(y)$ is the normal cone to $\overline{\mathrm{B}(0,1)}\cap\left(\R\nn(s)\right)^{\perp}$ at $y$.
\end{myLem}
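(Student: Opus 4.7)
I would split the analysis at each point $s \in \Gamma_{\mathrm{N}}$ according to whether $x_{\tau(s)}$ vanishes or not, and in each regime reduce to a standard result.

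\emph{Smooth regime $x_{\tau(s)} \neq 0$.} Write $P := I - \nn(s)\nn(s)^{\top}$ for the orthogonal projection onto $(\R\nn(s))^{\perp}$, so that $\|w_{\tau(s)}\| = \sqrt{\langle Pw, w\rangle}$ for every $w \in \R^d$. Since $Px \neq 0$, this composition of a linear map with the Euclidean norm is $\mathcal{C}^{\infty}$ in a neighborhood of $x$. A direct computation yields
\begin{equation*}
\nabla \|\cdot_{\tau(s)}\|(x) = \frac{x_{\tau(s)}}{\|x_{\tau(s)}\|}, \qquad \nabla^{2} \|\cdot_{\tau(s)}\|(x) = \frac{P}{\|x_{\tau(s)}\|} - \frac{(Px)(Px)^{\top}}{\|x_{\tau(s)}\|^{3}}.
\end{equation*}
By convexity, $\partial \|\cdot_{\tau(s)}\|(x)$ reduces to the singleton $\{\nabla \|\cdot_{\tau(s)}\|(x)\}$, so necessarily $y = x_{\tau(s)}/\|x_{\tau(s)}\|$. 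Using the identities $Pz = z_{\tau(s)}$ and $z \cdot Px = z_{\tau(s)} \cdot x_{\tau(s)}$, the quadratic form $\tfrac{1}{2}\langle \nabla^{2} \|\cdot_{\tau(s)}\|(x) z, z\rangle$ simplifies to exactly the expression claimed in the statement. It remains to invoke the classical fact that any $\mathcal{C}^{2}$ convex function is twice epi-differentiable at every interior point of its domain, with second-order epi-derivative equal to one-half of its Hessian quadratic form.

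\emph{Nonsmooth regime $x_{\tau(s)} = 0$.} Here I would exploit the identification $\|\cdot_{\tau(s)}\| = \xi_{C_{s}}$ where $C_{s} := \overline{\mathrm{B}(0,1)} \cap (\R\nn(s))^{\perp}$. Indeed, for every $w \in \R^{d}$, one has $\sup_{c \in C_{s}} c \cdot w = \sup_{c \in C_{s}} c \cdot w_{\tau(s)} = \|w_{\tau(s)}\|$ by Cauchy--Schwarz (and the maximum is attained at $c = w_{\tau(s)}/\|w_{\tau(s)}\|$ when $w_{\tau(s)} \neq 0$). Since $C_{s}$ spans the $(d-1)$-dimensional hyperplane $(\R\nn(s))^{\perp}$, one has $C_{s}^{\perp} = \R\nn(s)$, and the hypothesis $x_{\tau(s)} = 0$ is equivalent to $x \in C_{s}^{\perp}$. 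Proposition~\ref{epitangnorm} then applies and yields both $\partial\|\cdot_{\tau(s)}\|(x) = \partial \xi_{C_{s}}(x) = C_{s}$ and $\mathrm{d}_{e}^{2} \|\cdot_{\tau(s)}\|(x \mid y) = \iota_{\mathrm{N}_{C_{s}}(y)}$ for every $y \in C_{s}$, which is precisely the content of the second case.

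\emph{Main difficulty.} The nonsmooth case is essentially delegated to Proposition~\ref{epitangnorm}, so the only real work lies in the smooth case. A direct second-order Taylor expansion yields \emph{pointwise} convergence of the second-order difference quotients $\Delta_{t}^{2} \|\cdot_{\tau(s)}\|(x \mid y)$ to the Hessian quadratic form, but the definition of twice epi-differentiability requires the stronger Mosco (epi-)convergence in the sense of Definition~\ref{epidiffpara}. In finite dimension this upgrade is automatic because the difference quotients are convex and their pointwise limit is everywhere finite and continuous; I would either cite this as a standard result (Rockafellar--Wets) or verify the two inequalities directly, obtaining the $\limsup$ part from the constant recovery sequence $z_{t} := z$ and the $\liminf$ part from pointwise convergence together with the local uniform convergence of convex functions with a continuous limit.
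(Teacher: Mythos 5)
Your proposal is correct and follows essentially the same route as the paper: for $x_{\tau(s)}=0$ the paper likewise identifies $\left\|\cdot_{\tau(s)}\right\|$ with the support function of $\overline{\mathrm{B}(0,1)}\cap(\R\nn(s))^{\perp}$, checks that this set's orthogonal complement is $\R\nn(s)$, and applies Proposition~\ref{epitangnorm}; for $x_{\tau(s)}\neq0$ it computes the same Hessian and invokes Remark~\ref{diffsecond} (twice Fréchet differentiability implies twice epi-differentiability with second-order epi-derivative $\tfrac{1}{2}\mathrm{D}^2(\cdot)(z,z)$), which is exactly the ``classical fact'' you cite and whose Mosco-convergence verification you sketch in slightly more detail than the paper does.
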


\begin{proof}
Let $s\in\Gamma_{\mathrm{N}}$. Note that
$$
\partial{\left\|\cdot_{\tau(s)}\right\|}(x):=
\left\{
\begin{array}{lcll}
\left\{\frac{x_{\tau(s)}}{\left\|x_{\tau(s)}\right\|}\right\}	&   & \text{ if } x_{\tau(s)}\neq0 , \\
\mathrm{\overline{\mathrm{B}(0,1)}\cap\left(\R\nn(s)\right)^{\perp}}	&   & \text{ if } x_{\tau(s)}=0.
\end{array}
\right.
$$
Since
$$
 (\mathrm{\overline{\mathrm{B}(0,1)}\cap\left(\R\nn(s)\right)^{\perp}})^{\perp}=\R\nn(s),
$$ 
one can apply Proposition~\ref{epitangnorm} to get that
$$
\mathrm{d}_{e}^{2}\left\|\cdot_{\tau(s)}\right\|(x \mid y)=\iota_{\mathrm{N}_{\overline{\mathrm{B}(0,1)}\cap\left(\R\nn(s)\right)^{\perp}}(y)},
$$
for all $x\in\R\nn(s)$ and all $y\in\overline{\mathrm{B}(0,1)}\cap\left(\R\nn(s)\right)^{\perp}$.
In the case where $x\notin\R\nn(s)$ (i.e. $x_{\tau(s)}\neq0$), one can easily prove that $\left\|\cdot_{\tau(s)}\right\|$ is twice Fréchet differentiable at $x$ with
\begin{multline*}
\mathrm{D}^{2}\left\|\cdot_{\tau(s)}\right\|(x)(z_1,z_2)=\\\frac{1}{\left\|x_{\tau(s)}\right\|}\left(z_{1_{\tau(s)}}\cdot z_{2_{\tau(s)}}-\left(x_{\tau(s)}\cdot z_{2_{\tau(s)}}\right)z_{1_{\tau(s)}}\cdot\frac{x_{\tau(s)}}{\left\|x_{\tau(s)}\right\|^{2}}\right),\qquad\forall\left(z_1,z_2\right)\in\R^{d}\times\R^{d}.
\end{multline*}
From Remark~\ref{diffsecond}, one gets
$$
\mathrm{d}_{e}^{2}\left\|\cdot_{\tau(s)}\right\| \left( x\mid\frac{x_{\tau(s)}}{\left\|x_{\tau(s)}\right\|} \right)(z)=\frac{1}{2}\mathrm{D}^{2}\left\|\cdot_{\tau(s)}\right\|(x)(z,z)=\frac{1}{2\left\|x_{\tau(s)}\right\|}\left(\left\|z_{\tau(s)}\right\|^{2}-\left|z_{\tau(s)}\cdot\frac{x_{\tau(s)}}{\left\|x_{\tau(s)}\right\|}\right|^2\right),
$$
for all $z\in\R^d$, which concludes the proof.
\end{proof}

Now, with additional assumptions, let us compute the second-order epi-derivative of $G(s)$ for almost all $s\in\Gamma_{\mathrm{N}}$.
\begin{myProp}\label{épidiffgabs}
Assume that, for almost all $s\in\Gamma_{\mathrm{N}}$, the map $t\in\mathbb{R}_{+}\mapsto g_{t}(s)\in\mathbb{R}_{+}$ is differentiable at $t=0$, with its derivative denoted by $g'_{0}(s)$. Then, for almost all $s\in\Gamma_{\mathrm{N}}$, the map~$G(s)$ is twice epi-differentiable at any~$x\in\mathbb{R}^{d}$ for all $y\in 
g_{0}(s)\partial{||\mathord{\cdot}_{\tau(s)}||}(x)$ with
\begin{equation*}
\small{
\mathrm{D}_{e}^{2}G(s)(x \mid y)(z):=
\left\{
\begin{array}{lcll}
\frac{g_{0}(s)}{2\left\|x_{\tau(s)}\right\|}\left(\left\|z_{\tau(s)}\right\|^{2}-\left|z_{\tau(s)}\cdot\frac{x_{\tau(s)}}{\left\|x_{\tau(s)}\right\|}\right|^2\right)+g'_{0}(s)\frac{x_{\tau(s)}}{\left\|x_{\tau(s)}\right\|}\cdot z	&   & \text{ if } x_{\tau(s)}\neq0 , \\
\iota_{\mathrm{N}_{\overline{\mathrm{B}(0,1)}\cap\left(\R\nn(s)\right)^{\perp}}(\frac{y}{g_{0}(s)})}(z)+g'_{0}(s)\frac{y}{g_{0}(s)}\cdot z	&   & \text{ if } x_{\tau(s)}=0,
\end{array}
\right.}
\end{equation*}
for all $z\in\mathbb{R}^{d}$.
\end{myProp}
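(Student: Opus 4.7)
The plan is to reduce the parameterized twice epi-differentiability of $G(s)(t,\cdot) = g_t(s)\left\|\cdot_{\tau(s)}\right\|$ to the (already known) twice epi-differentiability of the tangential norm map provided by Lemma~\ref{epinormtang}. Since $\partial [G(s)(0,\cdot)](x) = g_0(s)\, \partial \left\|\cdot_{\tau(s)}\right\|(x)$ and $g_0(s)>0$, one may write $y = g_0(s)\, y'$ with $y' \in \partial \left\|\cdot_{\tau(s)}\right\|(x)$, and then all the work amounts to manipulating the parameterized second-order difference quotient so as to expose the classical (non-parameterized) quotient of the tangential norm map.

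First I would perform the following algebraic decomposition. Starting from
$$\Delta_t^2 G(s)(x \mid y)(z) = \frac{g_t(s)\left\|(x+tz)_{\tau(s)}\right\| - g_t(s)\left\|x_{\tau(s)}\right\| - t\, y \cdot z}{t^2},$$
I add and subtract $t\, g_t(s)\, y' \cdot z$ in the numerator, using $y = g_0(s)\, y'$, to obtain the key identity
$$\Delta_t^2 G(s)(x \mid y)(z) = g_t(s)\, \Delta_t^2 \left\|\cdot_{\tau(s)}\right\|(x \mid y')(z) + \frac{g_t(s) - g_0(s)}{t}\, y' \cdot z.$$
This splits the parameterized quotient into a positive scalar multiple of the classical second-order difference quotient of the tangential norm map, plus a continuous linear perturbation in $z$ whose coefficient converges to $g'_0(s)$ by assumption.

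Next I would pass to the Mosco limit as $t \to 0^+$. By Lemma~\ref{epinormtang}, $\Delta_t^2\left\|\cdot_{\tau(s)}\right\|(x \mid y')$ Mosco-converges (which, on $\R^d$, is plain epi-convergence) to $\mathrm{d}_e^2\left\|\cdot_{\tau(s)}\right\|(x \mid y')$; this limit is either a finite quadratic form (case $x_{\tau(s)}\neq 0$, with $y'=x_{\tau(s)}/\left\|x_{\tau(s)}\right\|$) or the indicator of a normal cone (case $x_{\tau(s)}=0$, with $y'=y/g_0(s)$). I would then invoke two stability properties of epi-convergence: multiplication by the positive scalar $g_t(s)\to g_0(s)>0$ preserves the epi-limit up to rescaling by $g_0(s)$ (noting in particular that $g_0(s)\cdot(+\infty)=+\infty$ and $g_0(s)\cdot\iota_A=\iota_A$), and addition of a linear form whose coefficient converges in $\R^d$ (hence uniformly on bounded sets) preserves epi-convergence. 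Combining both yields
$$\mathrm{D}_e^2 G(s)(x \mid y)(z) = g_0(s)\, \mathrm{d}_e^2 \left\|\cdot_{\tau(s)}\right\|(x \mid y')(z) + g'_0(s)\, y' \cdot z,$$
which upon expanding $y'$ according to the two cases reproduces exactly the two formulas in the statement.

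The main obstacle I expect is the rigorous verification of the two epi-convergence stability principles, especially in the case $x_{\tau(s)}=0$ where the limiting function is extended-real-valued. One cannot pass the factor $g_t(s)$ through the epi-limit without using strict positivity of $g_0(s)$, and the linear perturbation, though continuous, must be controlled along all sequences $z_t\to z$ to validate the $\liminf$ and $\limsup$ conditions of epi-convergence. Both facts are classical and follow from the definitions via a short direct argument, which completes the proof.
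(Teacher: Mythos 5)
Your proposal is correct and takes essentially the same route as the paper: the identical decomposition $\Delta_t^2 G(s)(x\mid y)(z) = g_t(s)\,\delta_t^2\|\cdot_{\tau(s)}\|\bigl(x \mid \tfrac{y}{g_0(s)}\bigr)(z) + \tfrac{g_t(s)-g_0(s)}{t}\,\tfrac{y}{g_0(s)}\cdot z$, followed by Lemma~\ref{epinormtang} and passage to the Mosco epi-limit. If anything, you are more explicit than the paper about the two stability facts that justify the limit (scaling by $g_t(s)\to g_0(s)>0$ and adding a linear perturbation with convergent coefficient), which the paper invokes without comment.
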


\begin{proof}
We use the same notations as in Definitions~\ref{epidiff} and~\ref{epidiffpara}.
Let $x\in\mathbb{R}^{d}$. Then, for almost all~$s\in\Gamma_{\mathrm{N}}$, for all $y\in g_{0}(s)\partial{||\mathord{\cdot}_{\tau(s)}||}(x)$ and all $z\in\mathbb{R}^{d}$, one has
\begin{multline*}
\displaystyle\Delta_{t}^{2}G(s)(x \mid y)(z)=\frac{g_{t}(s)\left\|x_{\tau(s)}+t z_{\tau(s)}\right\|-g_{t}(s)\left\|x_{\tau(s)}\right\|-ty\cdot z}{t^{2}}\\=g_{t}(s)\frac{\left\|x_{\tau(s)}+t z_{\tau(s)}\right\|-\left\|x_{\tau(s)}\right\|-t \frac{y}{g_{0}(s)}\cdot z}{t^{2}}+\frac{\left(g_{t}(s)-g_{0}(s)\right)}{tg_{0}(s)}y\cdot z,
\end{multline*}
that is
$$
\displaystyle \Delta_{t}^{2}G(s)(x \mid y)(z)=g_{t}(s)\delta_{t}^{2}||\mathord{\cdot}_{\tau(s)}|| \left( x \mid \frac{y}{g_{0}(s)} \right)(z)+\frac{\left(g_{t}(s)-g_{0}(s)\right)}{tg_{0}(s)}y\cdot z,
$$ 
with $\frac{y}{g_{0}(s)}\in\partial||\mathord{\cdot}_{\tau(s)}||(x)$, and where $\delta_{t}^{2}||\mathord{\cdot}_{\tau(s)}||(x|\frac{y}{g_{0}(s)})$ is the second-order difference quotient function of~$||\mathord{\cdot}_{\tau(s)}||$ at $x$ for $\frac{y}{g_{0}(s)}$ (see Definition~\ref{epidiff} since $||\mathord{\cdot}_{\tau(s)}||$ is a $t$-independent function). Using the characterization of Mosco epi-convergence (see Proposition~\ref{caractMosco}) and Lemma~\ref{epinormtang}, one gets
$$
\displaystyle\mathrm{D}_{e}^{2}G(s)(x \mid y)(z)=g_{0}(s)\mathrm{d}_{e}^{2}\left\|\cdot_{\tau(s)}\right\| \left( x \mid \frac{y}{g_{0}(s)} \right)+g'_{0}(s)\frac{y}{g_{0}(s)}\cdot z.
$$
The proof is complete.
\end{proof}

To conclude this part, let us characterize $\mathrm{N}_{\overline{\mathrm{B}(0,1)}\cap\left(\R\nn(s)\right)^{\perp}}(y)$ for all~$y\in\overline{\mathrm{B}(0,1)}\cap\left(\R\nn(s)\right)^{\perp}$ and for almost all $s\in\Gamma_{\mathrm{N}}$.

\begin{myLem}\label{conenormalde}
Let $s\in\Gamma_{\mathrm{N}}$. It holds that
\begin{equation*}
\mathrm{N}_{\overline{\mathrm{B}(0,1)}\cap\left(\R\nn(s)\right)^{\perp}}(y)=
\left\{
\begin{array}{lcll}
\R\nn(s)	&   & \text{ if } y\in\mathrm{B}(0,1)\cap\left(\R\nn(s)\right)^{\perp}, \\
\R\nn(s)+\R_{+}y	&   & \text{ if } y\in\partial{\mathrm{B}(0,1)}\cap\left(\R\nn(s)\right)^{\perp},
\end{array}
\right.
\end{equation*}
for all $y\in\overline{\mathrm{B}(0,1)}\cap\left(\R\nn(s)\right)^{\perp}$, where $\R_{+}y:=\{ z\in\R^d \mid \exists  \nu \geq0 \text{ such that } z=\nu y\}$.
\end{myLem}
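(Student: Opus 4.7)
Write $C := \overline{\mathrm{B}(0,1)}\cap\left(\R\nn(s)\right)^{\perp}$, which is the closed unit ball of the $(d-1)$-dimensional subspace $V := \left(\R\nn(s)\right)^{\perp}$, and decompose any $z\in\R^d$ as $z = z_{\nn}\nn(s) + z_{\tau}$ with $z_{\tau}\in V$. Since both $y$ and every $c\in C$ belong to $V$, we have $c-y\in V$, so the component $z_{\nn}\nn(s)$ contributes zero to the scalar product $z\cdot(c-y)$. This immediately gives $\R\nn(s)\subset \mathrm{N}_{C}(y)$ in both cases, and reduces the analysis to determining the admissible tangential part $z_{\tau}\in V$.

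For the first case $y\in\mathrm{B}(0,1)\cap V$, I would use that $y$ lies in the relative interior of $C$ in $V$: there exists $\eps>0$ such that $y + \eps w\in C$ for every $w\in V$ with $\|w\|\leq 1$. Testing the defining inequality of $\mathrm{N}_{C}(y)$ against $c := y + \eps\, z_{\tau}/\|z_{\tau}\|$ (when $z_{\tau}\neq 0$) forces $\eps\|z_{\tau}\|\leq 0$, hence $z_{\tau}=0$ and $z\in\R\nn(s)$, proving the reverse inclusion.

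For the second case $y\in\partial\mathrm{B}(0,1)\cap V$, the easy inclusion $\R\nn(s) + \R_{+}y\subset \mathrm{N}_{C}(y)$ follows by checking that for $\mu\geq 0$ and $c\in C$, Cauchy--Schwarz gives $\mu\, y\cdot(c-y) = \mu(y\cdot c - 1)\leq \mu(\|y\|\|c\|-1)\leq 0$. For the reverse inclusion I would split the analysis of $z_{\tau}$ into direction and sign. To fix the direction, for any $w\in V$ with $w\cdot y = 0$ I would consider the curve $c_{t} := (y + tw)/\|y + tw\|\in C$, expand $c_{t} - y = tw + O(t^{2})$ as $t\to 0^{+}$, and pass to the limit in $t^{-1}z\cdot(c_{t}-y)\leq 0$, and similarly with $-w$; this yields $z_{\tau}\cdot w = 0$ for every $w\in V$ orthogonal to $y$, so that $z_{\tau}\in \R y$. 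To fix the sign, I would test the normal cone inequality at $c=0\in C$, which gives $-z\cdot y\leq 0$, i.e.\ $z_{\tau}\cdot y\geq 0$, hence $z_{\tau}\in\R_{+}y$.

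The computation is elementary and the only mild technical point is the expansion of $c_{t}$ above (equivalently, identifying the tangent cone to $C$ at a boundary point within the subspace $V$); once this is done, both inclusions follow by straightforward scalar product manipulations and the decomposition along $\nn(s)$ and $V$.
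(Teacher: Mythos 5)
Your proof is correct and follows the same overall structure as the paper's: both decompose $z$ along $\nn(s)$ and $V:=(\R\nn(s))^{\perp}$, observe that the normal component never contributes to $z\cdot(c-y)$, and then verify the remaining inclusions by direct scalar-product computations. The interior case and the two easy inclusions are essentially identical to the paper's argument. The one place where you genuinely diverge is the reverse inclusion at a boundary point $y\in\partial\mathrm{B}(0,1)\cap V$: you determine the direction of $z_{\tau}$ by testing along the curve $c_{t}=(y+tw)/\|y+tw\|$ for $w\in V$ with $w\cdot y=0$ and passing to the limit (in effect computing the tangent cone to the ball at $y$ within $V$), and then fix the sign by testing at $c=0$. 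The paper instead uses the single test point $c=\tfrac{1}{2}\bigl(\tfrac{v_{\tau}}{\|v_{\tau}\|}+y\bigr)\in\overline{\mathrm{B}(0,1)}\cap V$ (for $v_{\tau}\neq 0$), which forces $\|v_{\tau}\|\,\|y\|=v_{\tau}\cdot y$ and hence $v_{\tau}\in\R_{+}y$ by the equality case of Cauchy--Schwarz, a one-shot argument with no limiting procedure or expansion to justify. Both routes are valid: yours requires the (easy but nonzero) verification that $c_{t}-y=tw+O(t^{2})$, while the paper's requires spotting the right test vector; a small bonus of your version is that it handles $z_{\tau}=0$ without a separate case, whereas the paper's test point implicitly assumes $v_{\tau}\neq 0$.
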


\begin{proof}
Let $s\in\Gamma_{\mathrm{N}}$ and $y\in\overline{\mathrm{B}(0,1)}\cap\left(\R\nn(s)\right)^{\perp}.$
\begin{enumerate}
    \item[{\rm (i)}] First, let $y\in\mathrm{B}(0,1)\cap\left(\R\nn(s)\right)^{\perp}$.
If $v\in\R\nn(s)$, then 
$$
v\cdot(y-z)=0, \qquad\forall z\in \overline{\mathrm{B}(0,1)}\cap\left(\R\nn(s)\right)^{\perp},
$$
thus $v\in\mathrm{N}_{\overline{\mathrm{B}(0,1)}\cap\left(\R\nn(s)\right)^{\perp}}(y)$. Since this is true for any $v\in\R\nn(s)$, one deduces that $$\R\nn(s)\subset\mathrm{N}_{\overline{\mathrm{B}(0,1)}\cap\left(\R\nn(s)\right)^{\perp}}(y).
$$ 
Consider $v\in\mathrm{N}_{\overline{\mathrm{B}(0,1)}\cap\left(\R\nn(s)\right)^{\perp}}(y)$. Then it holds that
$$
v\cdot\left(z-y\right)\leq0,\qquad\forall z \in\overline{\mathrm{B}(0,1)}\cap\left(\R\nn(s)\right)^{\perp}. 
$$
Moreover there exists $\eps>0$ such that $\mathrm{B}(y,\eps)\cap\left(\R\nn(s)\right)^{\perp}\subset\mathrm{B}(0,1)\cap\left(\R\nn(s)\right)^{\perp}$. Therefore by considering $z:=y+\eps\frac{w}{2\left\|w\right\|}$ for any $w\in\left(\R\nn(s)\right)^{\perp}$, one deduces that
$$
v\cdot w=0,\qquad\forall w\in\left(\R\nn(s)\right)^{\perp}.
$$
Thus $v\in((\R\nn(s))^{\perp})^{\perp}=\R\nn(s)$. Since this is true for any $v\in\mathrm{N}_{\overline{\mathrm{B}(0,1)}\cap\left(\R\nn(s)\right)^{\perp}}(y)$, one deduces that
$$
\mathrm{N}_{\overline{\mathrm{B}(0,1)}\cap\left(\R\nn(s)\right)^{\perp}}(y)\subset\R\nn(s).
$$
\item[{\rm (ii)}]  Let $y\in\partial{\mathrm{B}}(0,1)\cap\left(\R\nn(s)\right)^{\perp}$. If $v\in\R\nn(s)+\R_{+}y$, then
$$
v\cdot\left(z-y\right)=v_{\tau(s)}\cdot\left(z-y\right)\leq\left\|v_{\tau(s)}\right\| \left\|z\right\|-v_{\tau(s)}\cdot y\leq\left\|v_{\tau(s)}\right\|-\left\|v_{\tau(s)}\right\|=0,
$$
for all $z\in \overline{\mathrm{B}(0,1)}\cap\left(\R\nn(s)\right)^{\perp}$. Thus it follows that
$$
\R\nn(s)+\R_{+}y\subset\mathrm{N}_{\overline{\mathrm{B}(0,1)}\cap\left(\R\nn(s)\right)^{\perp}}(y).
$$
Let $v\in\mathrm{N}_{\overline{\mathrm{B}(0,1)}\cap\left(\R\nn(s)\right)^{\perp}}(y)$, and consider $z:=\frac{1}{2}\left(\frac{v_{\tau(s)}}{\left\|v_{\tau(s)}\right\|}\left\|y\right\|+y\right)\in\overline{\mathrm{B}(0,1)}\cap\left(\R\nn(s)\right)^{\perp}$. One deduces that
$$
0\geq v\cdot(z-y)=v_{\tau(s)}\cdot\frac{1}{2}\left(\frac{v_{\tau(s)}}{\left\|v_{\tau(s)}\right\|}\left\|y\right\|-y\right)=\frac{1}{2}\left(\left\|v_{\tau(s)}\right\|\left\|y\right\|-v_{\tau(s)}\cdot y\right)\geq0,
$$
thus $\left\|v_{\tau(s)}\right\|\left\|y\right\|=v_{\tau(s)}\cdot y$, hence $v_{\tau(s)}\in\R_{+}y$. This is true for any~$v\in\mathrm{N}_{\overline{\mathrm{B}(0,1)}\cap\left(\R\nn(s)\right)^{\perp}}(y)$, thus one deduces that
$$
\mathrm{N}_{\overline{\mathrm{B}(0,1)}\cap\left(\R\nn(s)\right)^{\perp}}(y)\subset\R\nn(s)+\R_{+}y.
$$
\end{enumerate}
The proof is complete.
\end{proof}

\subsubsection{The derivative of the solution to the parameterized Tresca friction problem}
From the previous results and some additional assumptions detailed below, we are now in a position to state and prove the main result of this paper which characterizes the derivative of the solution to the parameterized Tresca friction problem~\eqref{PbNeumannDirichletTrescaPara}.

\begin{myTheorem}\label{caractu0derivDNT}
Let $u_{t}\in\HH^{1}_{\mathrm{D}}(\Omega,\R^d)$ be the unique solution to the parameterized Tresca friction problem~\eqref{PbNeumannDirichletTrescaPara} for all~$t \geq 0$. Let us assume that:
\begin{enumerate}[label={\rm (\roman*)}]
    \item the map $t\in\mathbb{R}_{+}\mapsto f_{t}\in \mathrm{L}^{2}(\Omega,\R^d)$ is differentiable at $t=0$, with its derivative denoted by~$f'_{0}\in\LL^{2}(\Omega,\R^d)$;\label{hypo1}
    \item the map $t\in\mathbb{R}_{+}\mapsto h_{t}\in \LL^{2}(\Gamma_{\mathrm{N}})$ is differentiable at $t=0$, with its derivative denoted by~$h'_{0}\in\LL^{2}(\Gamma_{\mathrm{N}})$;\label{hypo2}
    \item for almost all $s\in\Gamma_{\mathrm{N}}$, the map $t\in\mathbb{R}_{+}\mapsto g_{t}(s)\in\mathbb{R}_{+}$ is differentiable at $t=0$, with its derivative denoted by $g'_{0}(s)$, and also $g_{0}'\in \mathrm{L}^{2}(\Gamma_{\mathrm{N}})$;\label{hypo3}
    \item the map $s\in\Gamma_{\mathrm{N}^{u_0,g_0}_{\mathrm{R}}}\mapsto \frac{g_{0}(s)}{\left\|u_{0_\tau}(s)\right\|}\in\mathbb{R}_{+}$ belongs to $\LL^{4}(\Gamma_{\mathrm{N}^{u_0,g_0}_{\mathrm{R}}})$ (see below for definition of the set $\Gamma_{\mathrm{N}^{u_0,g_0}_{\mathrm{R}}}$);\label{hyposup}
    \item the parameterized Tresca friction functional $\Phi$ defined in~\eqref{fonctionnelledeTrescaparacas2} is twice epi-differentiable (see Definition~\ref{epidiffpara}) at $u_{0}$ for $F_{0}-u_{0}\in\partial \Phi(0,\cdot)(u_{0})$, with\label{hypo4}
\begin{equation}\label{hypoth1}
\displaystyle\mathrm{D}_{e}^{2}\Phi(u_{0}\mid F_{0}-u_{0})(w)=\int_{\Gamma_{\mathrm{N}}}\mathrm{D}_{e}^{2}G(s)(u_{0}(s)\mid\sigma_{\tau}(F_{0}-u_{0})(s))(w(s)) \, \mathrm{d}s,
\end{equation}    
for all $w\in \HH^{1}_{\mathrm{D}}(\Omega,\R^d)$, where $F_{0}\in\HH^{1}_{\mathrm{D}}(\Omega,\R^d)$ is the unique solution to the parameterized Dirichlet-Neumann problem~\eqref{PbNeumannDirichletPara} for the parameter~$t=0$.
\end{enumerate}
Then the map $t\in\mathbb{R}_{+}\mapsto u_{t}\in\HH^{1}_{\mathrm{D}}(\Omega,\R^d)$ is differentiable at $t=0$, and its derivative denoted by~$u'_{0}\in\HH^{1}_{\mathrm{D}}(\Omega,\R^d)$ is the unique weak solution to the tangential Signorini problem
\begin{equation}\tag{SP\ensuremath{_{0}'}}\label{caractu0DNT}
{\arraycolsep=2pt
\left\{
\begin{array}{rcll}
-\mathrm{div}(\mathrm{A}\mathrm{e}(u'_{0})) & = & f'_0   & \text{ in } \Omega , \\[5pt]
u'_{0} & = & 0  & \text{ on } \Gamma_{\mathrm{D}} ,\\[5pt]
\sigma_{\nn}(u'_{0}) & = & h'_0  & \text{ on } \Gamma_{\mathrm{N}} ,\\[5pt]
u'_{0_\tau} & = & 0  & \text{ on } \Gamma_{\mathrm{N}^{u_0,g_0}_{\mathrm{T}}},\\[5pt]
\sigma_{\tau}(u'_{0})+\frac{g_{0}}{\left\|u_{0_\tau}\right\|}\left(u'_{0_\tau}-\left(u'_{0_\tau}\cdot \frac{u_{0_\tau}}{\left\|u_{0_\tau}\right\|}\right)\frac{u_{0_\tau}}{\left\|u_{0_\tau}\right\|}\right) & = & -g'_0\frac{u_{0_\tau}}{\left\|u_{0_\tau}\right\|}  & \text{ on } \Gamma_{\mathrm{N}^{u_0,g_0}_{\mathrm{R}}} ,\\[15pt]
u'_{0_\tau}\in\R_{-}\frac{\sigma_{\tau}(u_0)}{g_0}, \left(\sigma_{\tau}(u'_0)-g'_0 \frac{\sigma_{\tau}(u_0)}{g_0}\right)\cdot \frac{\sigma_{\tau}(u_0)}{g_0}\leq0 \\ \text{ and } u'_{0_\tau}\cdot\left(\sigma_{\tau}(u'_0)-g'_0 \frac{\sigma_{\tau}(u_0)}{g_0}\right)  & = & 0  & \text{ on } \Gamma_{\mathrm{N}^{u_0,g_0}_{\mathrm{S}}},
\end{array}
\right.}
\end{equation}
where $\Gamma_{\mathrm{N}}$ is decomposed (up to a null set) as
$\Gamma_{\mathrm{N}^{u_0,g_0}_{\mathrm{T}}}\cup\Gamma_{\mathrm{N}^{u_0,g_0}_{\mathrm{R}}}\cup\Gamma_{\mathrm{N}^{u_0,g_0}_{\mathrm{S}}}$ with
$$
\begin{array}{l}
\Gamma_{\mathrm{N}^{u_0,g_0}_{\mathrm{R}}}:=\left\{s\in\Gamma_{\mathrm{N}} \mid  u_{0_\tau}(s)\neq0\right \}, \\
\Gamma_{\mathrm{N}^{u_0,g_{0}}_{\mathrm{T}}}:=\left\{s\in\Gamma_{\mathrm{N}} \mid  u_{0_\tau}(s)=0 \text{ and } \frac{\sigma_{\tau}(u_{0})(s)}{g_{0}(s)}\in\mathrm{B}(0,1)\cap\left(\R\nn(s)\right)^{\perp}\right\}, \\
\Gamma_{\mathrm{N}^{u_0,g_0}_{\mathrm{S}}}:=\left\{s\in\Gamma_{\mathrm{N}} \mid  u_{0_\tau}(s)=0 \text{ and } \frac{\sigma_{\tau}(u_{0})(s)}{g_{0}(s)}\in\partial{\mathrm{B}(0,1)}\cap\left(\R\nn(s)\right)^{\perp}\right\}.
\end{array}
$$
\end{myTheorem}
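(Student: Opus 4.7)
The plan is to apply the parameterized protodifferentiability result Theorem~\ref{TheoABC2018} (from~\cite{8AB}) to the proximal representation $u_t=\mathrm{prox}_{\Phi(t,\mathord{\cdot})}(F_t)$ given by Proposition~\ref{existenceuniciteTresca}, and then decode the resulting proximal identity as the weak formulation of the tangential Signorini problem~\eqref{caractu0DNT}. To invoke Theorem~\ref{TheoABC2018} I first verify its hypotheses: twice epi-differentiability of $\Phi$ at $u_0$ for $F_0-u_0$ is exactly assumption~\ref{hypo4}; the relation $F_0-u_0\in\partial\Phi(0,\mathord{\cdot})(u_0)$ follows from $u_0=\mathrm{prox}_{\Phi(0,\mathord{\cdot})}(F_0)$ and the definition of the proximal operator; and the differentiability at $t=0$ of the map $t\mapsto F_t$ in $\HH^{1}_{\mathrm{D}}(\Omega,\R^d)$, with derivative $F'_0$ the unique solution to~\eqref{PbNeumannDirichlet} with $f:=f'_0$ and $z:=h'_0\nn$, is obtained by applying Proposition~\ref{existenceunicitéDN} to the difference quotient $(F_t-F_0)/t$ (which itself solves a Dirichlet-Neumann problem) together with hypotheses~\ref{hypo1} and~\ref{hypo2}. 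Theorem~\ref{TheoABC2018} then yields
$$
u'_0=\mathrm{prox}_{\mathrm{D}_e^2\Phi(u_0\mid F_0-u_0)}(F'_0).
$$

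Next I would unpack this proximal identity, equivalent by definition to the variational inequality
$$
\dual{u'_0}{w-u'_0}_{\HH^{1}_{\mathrm{D}}(\Omega,\R^d)}+\mathrm{D}_e^2\Phi(u_0\mid F_0-u_0)(w)-\mathrm{D}_e^2\Phi(u_0\mid F_0-u_0)(u'_0)\geq\dual{F'_0}{w-u'_0}_{\HH^{1}_{\mathrm{D}}(\Omega,\R^d)},
$$
valid for all $w\in\HH^{1}_{\mathrm{D}}(\Omega,\R^d)$. The integral representation~\eqref{hypoth1} combined with Proposition~\ref{épidiffgabs} reduces the non-quadratic part to an integral over $\Gamma_{\mathrm{N}}$ of $\mathrm{D}_e^2 G(s)(u_0(s)\mid\sigma_\tau(F_0-u_0)(s))$. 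Since~\eqref{PbNeumannDirichletPara} imposes the purely normal Neumann data $h_0\nn$ one has $\sigma_\tau(F_0)=0$ on $\Gamma_{\mathrm{N}}$, and hence $\sigma_\tau(F_0-u_0)=-\sigma_\tau(u_0)$. I would then split this integral along the decomposition $\Gamma_{\mathrm{N}}=\Gamma_{\mathrm{N}^{u_0,g_0}_{\mathrm{T}}}\cup\Gamma_{\mathrm{N}^{u_0,g_0}_{\mathrm{R}}}\cup\Gamma_{\mathrm{N}^{u_0,g_0}_{\mathrm{S}}}$ and apply Lemma~\ref{conenormalde} piecewise: on $\Gamma_{\mathrm{N}^{u_0,g_0}_{\mathrm{R}}}$ the smooth quadratic-plus-linear expression from Proposition~\ref{épidiffgabs} produces the Robin-type tangential condition with coefficient $g_0/\|u_{0_\tau}\|$, whose $\LL^4$-integrability is exactly hypothesis~\ref{hyposup}; on $\Gamma_{\mathrm{N}^{u_0,g_0}_{\mathrm{T}}}$ Lemma~\ref{conenormalde} gives $\iota_{\R\nn(s)}$, forcing $u'_{0_\tau}=0$; and on $\Gamma_{\mathrm{N}^{u_0,g_0}_{\mathrm{S}}}$ Lemma~\ref{conenormalde} applied with $y=-\sigma_\tau(u_0)/g_0$ yields $\iota_{\R\nn(s)+\R_{-}\sigma_\tau(u_0)/g_0}$, encoding both the cone constraint $u'_{0_\tau}\in\R_{-}\sigma_\tau(u_0)/g_0$ and the unilateral inequality on $\sigma_\tau(u'_0)$. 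Assembling the three contributions recovers the weak formulation of~\eqref{caractu0DNT} as given in Definition~\ref{weakformuletangentSigno} with $v_\tau:=\sigma_\tau(u_0)/g_0$, $\ell:=-g'_0$, $k:=g_0/\|u_{0_\tau}\|$, and right-hand side data $f'_0,h'_0$; uniqueness of the weak solution follows by the argument of Proposition~\ref{existenceSignorinitang}.

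The main obstacle is this last identification step: keeping track of the sign flip $\sigma_\tau(F_0-u_0)=-\sigma_\tau(u_0)$ when feeding $y$ into Lemma~\ref{conenormalde}, so that the unilateral cone on $\Gamma_{\mathrm{N}^{u_0,g_0}_{\mathrm{S}}}$ comes out as $\R_{-}\sigma_\tau(u_0)/g_0$ rather than $\R_+\sigma_\tau(u_0)/g_0$, and matching each piecewise contribution of the integrand exactly against the corresponding term in the weak formulation of~\eqref{caractu0DNT}. Hypothesis~\ref{hyposup} is precisely what is required to ensure that the Robin-type coefficient defines a bounded linear form on $\HH^{1}_{\mathrm{D}}(\Omega,\R^d)$ via the continuous embedding $\HH^{1}(\Omega,\R^d)\hookrightarrow\LL^{4}(\Gamma,\R^d)$, as used throughout Subsection~\ref{SectionSignorinicasscalairesansu}.
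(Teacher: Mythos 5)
Your proposal follows essentially the same route as the paper's proof: represent $u_t$ as $\mathrm{prox}_{\Phi(t,\cdot)}(F_t)$, verify the hypotheses of Theorem~\ref{TheoABC2018} (differentiability of $t\mapsto F_t$ from the linearity of the Dirichlet--Neumann problem together with \ref{hypo1}--\ref{hypo2}, twice epi-differentiability from \ref{hypo4}), obtain $u'_0=\mathrm{prox}_{\mathrm{D}_e^2\Phi(u_0\mid F_0-u_0)}(F'_0)$, and decode the associated variational inequality via Proposition~\ref{épidiffgabs}, the observation $\sigma_\tau(F_0)=0$, and Lemma~\ref{conenormalde}. The paper additionally makes explicit one step you leave implicit in ``assembling the three contributions'': the quadratic part $\Psi$ enters the prox inequality as a difference $\Psi(w)-\Psi(u'_0)$, and one must use its convexity and Fr\'echet differentiability (Lemma~\ref{fonctionnelleannexe} and Proposition~\ref{gradientequi}) to replace this difference by $\dual{\nabla\Psi(u'_0)}{w-u'_0}_{\HH^{1}_{\mathrm{D}}(\Omega,\R^d)}$, which is the form appearing in Definition~\ref{weakformuletangentSigno}; this also supplies the properness and lower semi-continuity of $\mathrm{D}_e^2\Phi(u_0\mid F_0-u_0)$ required by Theorem~\ref{TheoABC2018}.

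One concrete correction to your final dictionary: with $v_\tau:=\sigma_\tau(u_0)/g_0$ you must take $\ell:=+g'_0$, not $-g'_0$. On $\Gamma_{\mathrm{N}^{u_0,g_0}_{\mathrm{S}}}$ the conditions of~\eqref{caractu0DNT} involve $\sigma_\tau(u'_0)-g'_0\,\sigma_\tau(u_0)/g_0$, i.e.\ $\ell v_\tau=g'_0\,\sigma_\tau(u_0)/g_0$; and on $\Gamma_{\mathrm{N}^{u_0,g_0}_{\mathrm{R}}}$ the Tresca law (equality case in Cauchy--Schwarz) forces $\sigma_\tau(u_0)=-g_0\,u_{0_\tau}/\left\|u_{0_\tau}\right\|$, so that $g'_0\,\sigma_\tau(u_0)/g_0=-g'_0\,u_{0_\tau}/\left\|u_{0_\tau}\right\|$ is precisely the right-hand side of the Robin condition in~\eqref{caractu0DNT}. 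With $\ell=-g'_0$ both boundary pieces would come out with $g'_0$ replaced by $-g'_0$, i.e.\ the wrong problem. Since this is exactly the sign bookkeeping you yourself flag as the main obstacle, it is worth fixing; everything else in the plan matches the paper's argument.
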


\begin{myRem}\normalfont\label{Remarquenotwice}
   As mentioned in papers~\cite{4ABC,BCJDC}, one can naturally expect from Proposition~\ref{epidiffoffunctionG} that the second-order epi-derivative of the parameterized Tresca friction functional $\Phi$ at~$u_{0}$ for~$F_{0}-u_{0}$ is given by Equality~\eqref{hypoth1}, which corresponds to the inversion of the symbols~$\mathrm{ME}\text{-}\mathrm{lim}$ and $\int_{\Gamma_{\mathrm{N}}}$  in Equality~\eqref{Delta2}. Nevertheless, to the best of our knowledge, the validity of this inversion is an open question in the literature. Precisely, we do not know, in general, if the parameterized Tresca friction functional is twice epi-differentiable at~$u_{0}$ for $F_{0}-u_{0}$. Nevertheless, similarly to~\cite[Appendix A]{BCJDC}, one can prove it in some practical situations.
\end{myRem}

\begin{proof}[Proof of Theorem~\ref{caractu0derivDNT}]
From Hypotheses~\ref{hypo3},~\ref{hypo4} and Proposition~\ref{épidiffgabs}, it follows that
\begin{multline*}
\displaystyle \mathrm{D}_{e}^{2}\Phi(u_{0}\mid F_{0}-u_{0})(w)=\int_{\Gamma_{\mathrm{N}^{u_0,g_0}_{\mathrm{R}}}}\left(\frac{g_{0}}{2\left\|u_{0_\tau}\right\|}\left(\left\|w_\tau\right\|^{2}-\left|w_\tau\cdot\frac{u_{0_\tau}}{\left\|u_{0_\tau}\right\|}\right|^2\right)+g'_{0}\frac{u_{0_\tau}}{\left\|u_{0_\tau}\right\|}\cdot w\right)\\+\int_{\Gamma_{\mathrm{N}}\backslash\Gamma_{\mathrm{N}^{u_0,g_0}_{\mathrm{R}}}}\iota_{\mathrm{N}_{\overline{\mathrm{B}(0,1)}\cap\left(\R\nn(s)\right)^{\perp}}(\frac{\sigma_{\tau}\left(F_0-u_0\right)(s)}{g_{0}(s)})}(w(s))\mathrm{d}s+\int_{\Gamma_{\mathrm{N}}\backslash\Gamma_{\mathrm{N}^{u_0,g_0}_{\mathrm{R}}}}g'_{0}\frac{\sigma_{\tau}\left(F_0-u_0\right)}{g_{0}}\cdot w,
\end{multline*}
for all $w\in \HH^{1}_{\mathrm{D}}(\Omega,\R^d)$, which can be rewritten as
\begin{multline*}
\displaystyle \mathrm{D}_{e}^{2}\Phi(u_{0}\mid F_{0}-u_{0})(w)=\\\Psi(w)+\int_{\Gamma_{\mathrm{N}^{u_0,g_0}_{\mathrm{R}}}}g'_{0}\frac{u_{0_\tau}}{\left\|u_{0_\tau}\right\|}\cdot w_\tau+\iota_{\mathcal{K}_{u_{0},\frac{\sigma_{\tau}(F_{0}-u_{0})}{g_{0}}}}(w)+\int_{\Gamma_{\mathrm{N}}\backslash\Gamma_{\mathrm{N}^{u_0,g_0}_{\mathrm{R}}}}g'_{0}\frac{\sigma_{\tau}\left(F_0-u_0\right)}{g_{0}}\cdot w_\tau,
\end{multline*}
for all $w\in \HH^{1}_{\mathrm{D}}(\Omega,\R^d)$, where
$\Psi$ is defined by
\begin{equation*}
\fonction{\Psi}{\HH^{1}_{\mathrm{D}}(\Omega,\R^d)}{\R}{w}{\displaystyle \Psi(w):=\int_{\Gamma_{\mathrm{N}^{u_0,g_0}_{\mathrm{R}}}}\frac{g_{0}}{2\left\|u_{0_\tau}\right\|}\left(\left\|w_\tau\right\|^{2}-\left|w_\tau\cdot\frac{u_{0_\tau}}{\left\|u_{0_\tau}\right\|}\right|^2\right),}
\end{equation*}
which is well defined from the continuous embedding $\HH^{1}(\Omega,\R^d){\hookrightarrow} \LL^{4}(\Gamma,\R^d)$ and from Hypothesis~\ref{hyposup}, and where $\mathcal{K}_{u_{0},\frac{\sigma_{\tau}(F_{0}-u_{0})}{g_{0}}}$ is the nonempty closed convex subset of $\HH^{1}_{\mathrm{D}}(\Omega,\R^d)$ defined by 
\begin{multline*}
\mathcal{K}_{u_{0},\frac{\sigma_{\tau}(F_{0}-u_{0})}{g_{0}}}:=\biggl\{ w\in \HH^{1}_{\mathrm{D}}(\Omega,\R^d)\mid w(s)\in \mathrm{N}_{\overline{\mathrm{B}(0,1)}\cap\left(\R\nn(s)\right)^{\perp}}\left(\frac{\sigma_{\tau}\left(F_0-u_0\right)(s)}{g_{0}(s)}\right) \\ \text{ for almost all }s\in\Gamma_{\mathrm{N}}\backslash\Gamma_{\mathrm{N}^{u_0,g_0}_{\mathrm{R}}} \biggl\}.
\end{multline*}
Moreover, from Lemma~\ref{conenormalde} and since $\sigma_{\tau}(F_0)=0$ \textit{a.e.} on $\Gamma_{\mathrm{N}}$, it follows that
\begin{equation*}
    \mathcal{K}_{u_{0},\frac{\sigma_{\tau}(F_{0}-u_{0})}{g_{0}}}=\left\{ w\in \HH^{1}_{\mathrm{D}}(\Omega,\R^d)\mid w_\tau=0 \text{ \textit{a.e.} on } \Gamma_{\mathrm{N}^{u_0,g_{0}}_{\mathrm{T}}}  \text{ and } w_\tau\in\R_{-}\frac{\sigma_\tau(u_0)}{g_0} \text{ \textit{a.e.} on } \Gamma_{\mathrm{N}^{u_0,g_0}_{\mathrm{S}}} \right\}.
\end{equation*}
Since $\frac{g_{0}}{||u_{0_\tau}||}>0$ \textit{a.e.} on $\Gamma_{\mathrm{N}^{u_0,g_0}_{\mathrm{R}}}$ and from Lemma~\ref{fonctionnelleannexe}, one deduces that $\Psi$ is convex and Fréchet differentiable on $\HH^{1}_{\mathrm{D}}(\Omega,\R^d)$. In particular we get that 
$\mathrm{D}_{e}^{2}\Phi(u_{0}|F_{0}-u_{0})$ is a proper lower semi-continuous convex function on $\HH^{1}_{\mathrm{D}}(\Omega,\R^d)$. Moreover, from Hypotheses~\ref{hypo1} and~\ref{hypo2} and from the linearity of the Dirichlet-Neumann problem~\eqref{PbNeumannDirichlet} and Proposition~\ref{existenceunicitéDN}, we can easily prove that the map $t\in\mathbb{R}_{+}\mapsto F_{t}\in \HH^{1}_{\mathrm{D}}(\Omega,\R^d)$ is differentiable at $t=0$, with its
derivative $F'_0\in\HH^{1}_{\mathrm{D}}(\Omega,\R^d)$ being the unique solution to the Dirichlet-Neumann problem
\begin{equation*}
\arraycolsep=2pt
\left\{
\begin{array}{rcll}
-\mathrm{div}(\mathrm{A}\mathrm{e}(F'_{0})) & = & f'_{0}   & \text{ in } \Omega , \\
F'_{0} & = & 0  & \text{ on } \Gamma_{\mathrm{D}} ,\\
\mathrm{A}\mathrm{e}(F'_{0})\nn & = & h'_{0}\nn  & \text{ on } \Gamma_{\mathrm{N}}.
\end{array}
\right.
\end{equation*}
Thus one can apply Theorem~\ref{TheoABC2018} to deduce that the map $t\in\mathbb{R}_{+}\mapsto u_{t}\in \HH^{1}_{\mathrm{D}}(\Omega,\R^d)$ is differentiable at $t=0$, and its derivative $u_{0}'\in\HH^{1}_{\mathrm{D}}(\Omega,\R^d)$ satisfies
$$
\displaystyle u_{0}'=\mathrm{prox}_{\mathrm{D}_{e}^{2}\Phi(u_{0}\mid F_{0}-u_{0})}(F_{0}'),
$$
which, from the definition of the proximal operator (see Proposition~\ref{proxi}), leads to
$$
\displaystyle F_{0}'-u_{0}'\in\partial \mathrm{D}_{e}^{2}\Phi(u_{0}\mid F_{0}-u_{0})(u_{0}'),
$$
which means that
$$
\displaystyle \dual{ F_{0}'-u'_{0}}{w-u_{0}'}_{\HH^{1}_{\mathrm{D}}(\Omega,\R^d)}\leq \mathrm{D}_{e}^{2}\Phi(u_{0}\mid F_{0}-u_{0})(w) -\mathrm{D}_{e}^{2}\Phi(u_{0}\mid F_{0}-u_{0})(u_{0}'),
$$
for all $w\in \HH^{1}_{\mathrm{D}}(\Omega,\R^d)$. Hence we get that
\begin{multline*}
    \dual{F_{0}'-u'_{0}}{w-u_{0}'}_{\HH^{1}_{\mathrm{D}}(\Omega,\R^d)} \leq \Psi(w)-\Psi(u'_0)+\iota_{\mathcal{K}_{u_{0},\frac{\sigma_{\tau}(F_{0}-u_{0})}{g_{0}}}}(w)-\iota_{\mathcal{K}_{u_{0},\frac{\sigma_{\tau}(F_{0}-u_{0})}{g_{0}}}}(u'_0)\\+\int_{\Gamma_{\mathrm{N}^{u_0,g_0}_{\mathrm{R}}}}g'_{0}\frac{u_{0_\tau}}{\left\|u_{0_\tau}\right\|}\cdot \left(w_\tau-u'_{0_\tau}\right)+\int_{\Gamma_{\mathrm{N}}\backslash\Gamma_{\mathrm{N}^{u_0,g_0}_{\mathrm{R}}}}g'_{0}\frac{\sigma_{\tau}\left(F_0-u_0\right)}{g_{0}}\cdot \left(w_\tau-u'_{0_\tau}\right),
\end{multline*}
for all $w\in\HH^{1}_{\mathrm{D}}(\Omega,\R^d)$.
Moreover, since $\sigma_{\tau}(F_{0})=0$ \textit{a.e.} on $\Gamma_{\mathrm{N}}$, and for all $w\in\mathcal{K}_{u_{0},\frac{\sigma_{\tau}(F_{0}-u_{0})}{g_{0}}}$,~$w_\tau=0$ \textit{a.e.} $\Gamma_{\mathrm{N}^{u_0,g_{0}}_{\mathrm{T}}}$, one deduces that $u'_0\in\mathcal{K}_{u_{0},\frac{\sigma_{\tau}(F_{0}-u_{0})}{g_{0}}}$ and
\begin{multline*}
 \dual{u'_{0}}{w-u_{0}'}_{\HH^{1}_{\mathrm{D}}(\Omega,\R^d)}+\Psi(w)-\Psi(u'_0)\geq\int_{\Omega}f'_0\cdot\left(w-u'_0\right)+\int_{\Gamma_{\mathrm{N}}}h'_0\left(w_\nn-u'_{0_\nn}\right)\\-\int_{\Gamma_{\mathrm{N}^{u_0,g_0}_{\mathrm{R}}}}g'_{0}\frac{u_{0_\tau}}{\left\|u_{0_\tau}\right\|}\cdot \left(w_\tau-u'_{0_\tau}\right)+\int_{\Gamma_{\mathrm{N}^{u_0,g_0}_{\mathrm{S}}}}g'_{0}\frac{\sigma_{\tau}\left(u_0\right)}{g_{0}}\cdot \left(w_\tau-u'_{0_\tau}\right),
\end{multline*}
for all $w\in\mathcal{K}_{u_{0},\frac{\sigma_{\tau}(F_{0}-u_{0})}{g_{0}}}$.
Moreover, since $\Psi$ is convex and Fréchet differentiable on $\HH^{1}_{\mathrm{D}}(\Omega,\R^d)$ (see Lemma~\ref{fonctionnelleannexe}), one gets from Proposition~\ref{gradientequi} that
\begin{multline*}
    \dual{\nabla{\Psi}(u'_0)}{w-u'_0}_{\HH^{1}_{\mathrm{D}}(\Omega,\R^d)}\geq -\dual{u'_{0}}{w-u_{0}'}_{\HH^{1}_{\mathrm{D}}(\Omega,\R^d)}+\int_{\Omega}f'_0\cdot\left(w-u'_0\right)+\int_{\Gamma_{\mathrm{N}}}h'_0\left(w_\nn-u'_{0_\nn}\right)\\-\int_{\Gamma_{\mathrm{N}^{u_0,g_0}_{\mathrm{R}}}}g'_{0}\frac{u_{0_\tau}}{\left\|u_{0_\tau}\right\|}\cdot \left(w_\tau-u'_{0_\tau}\right)+\int_{\Gamma_{\mathrm{N}^{u_0,g_0}_{\mathrm{S}}}}g'_{0}\frac{\sigma_{\tau}\left(u_0\right)}{g_{0}}\cdot \left(w_\tau-u'_{0_\tau}\right),
\end{multline*}
for all $w\in\mathcal{K}_{u_{0},\frac{\sigma_{\tau}(F_{0}-u_{0})}{g_{0}}}$. Finally, using the expression of $\nabla{\Psi}(u'_0)\in\HH^{1}_{\mathrm{D}}(\Omega,\R^d)$, one gets
\begin{multline*}
    \dual{u'_{0}}{w-u_{0}'}_{\HH^{1}_{\mathrm{D}}(\Omega,\R^d)}\geq\int_{\Omega}f'_0\cdot\left(w-u'_0\right)+\int_{\Gamma_{\mathrm{N}}}h'_0\left(w_\nn-u'_{0_\nn}\right)+\int_{\Gamma_{\mathrm{N}^{u_0,g_0}_{\mathrm{S}}}}g'_{0}\frac{\sigma_{\tau}\left(u_0\right)}{g_{0}}\cdot \left(w_\tau-u'_{0_\tau}\right)\\+\int_{\Gamma_{\mathrm{N}^{u_0,g_0}_{\mathrm{R}}}}\left(-g'_{0}\frac{u_{0_\tau}}{\left\|u_{0_\tau}\right\|}-\frac{g_0}{\left\|u_{0_\tau}\right\|}\left( u'_{0_\tau}-\left(u'_{0_\tau}\cdot \frac{u_{0_\tau}}{\left\|u_{0_\tau}\right\|}\right)\frac{u_{0_\tau}}{\left\|u_{0_\tau}\right\|} \right)\right)\cdot \left(w_\tau-u'_{0_\tau}\right),
\end{multline*}
for all $w\in\mathcal{K}_{u_{0},\frac{\sigma_{\tau}(F_{0}-u_{0})}{g_{0}}}$.  From Definition~\ref{weakformuletangentSigno} one deduces that $u'_{0}$ is the unique weak solution to the tangential Signorini problem~\eqref{caractu0DNT} which concludes the proof.
\end{proof}

\begin{myRem}\normalfont
Consider the framework of Theorem~\ref{caractu0derivDNT}. Note that $u'_{0}$ is the unique weak solution to the tangential Signorini problem~\eqref{caractu0DNT}, but is not necessarily a strong solution. Nevertheless, in the case where $\mathrm{A}\mathrm{e}(u'_0)\nn\in\LL^{2}(\Gamma_{\mathrm{N}},\R^d)$ and the decomposition $\Gamma_{\mathrm{D}}\cup\Gamma_{\mathrm{N}^{u_0,g_0}_{\mathrm{T}}}\cup\Gamma_{\mathrm{N}^{u_0,g_0}_{\mathrm{R}}}\cup\Gamma_{\mathrm{N}^{u_0,g_0}_{\mathrm{S}}}$ of $\Gamma$ is consistent (see Definition~\ref{regulieresens2}), then~$u'_{0}$ is a strong solution to the tangential Signorini problem~\eqref{caractu0DNT}.
\end{myRem}

\begin{myRem}\label{remarextensigno}\normalfont
In this paper a prescribed normal stress $\sigma_{\nn}(u_t)=h_t$ on $\Gamma_{\mathrm{N}}$ has been considered in the parameterized Tresca friction problem~\eqref{PbNeumannDirichletTrescaPara}. Nevertheless, as mentioned in Remark~\ref{RemCondBordNeumann1}, it is possible to consider a contact problem by constraining the normal displacement, that is, by replacing $\sigma_{\nn}(u_t)=h_t$ by ${u_t}_{\nn}=0$ on $\Gamma_{\mathrm{N}}$. This would lead in Theorem~\ref{caractu0derivDNT} to a derivative $u'_0$ satisfying ${u'_0}_\nn=0$ on~$\Gamma_{\mathrm{N}}$, instead of~$\sigma_{\nn}({u'_0})=h'_0$. One can also consider the Signorini's unilateral conditions given by ${u_t}_{\nn}\leq 0,~\sigma_{\nn}({u_t})\leq 0 \text{ and } {u_t}_{\nn} \sigma_{\nn}({u_t}) =0 $ on $\Gamma_{\mathrm{N}}$. In that case, for all~$t\geq0$, the solution~$u_t$ is given by $u_t:=\mathrm{prox}_{\iota_{\mathcal{K}}+\Phi(t,\cdot)}(F_t)$, where $F_t$ is the solution to the parameterized Dirichlet-Neumann problem~\eqref{PbNeumannDirichletPara} with $h_t=0$, and~$\iota_{\mathcal{K}}$ is the indicator function associated with the closed convex subset~$\mathcal{K}$ of $\HH^{1}_{\mathrm{D}}(\Omega,\R^{d})$ given by $\mathcal{K} := \left\{v\in\HH^{1}_{\mathrm{D}}(\Omega,\R^{d}) \mid v_{\nn}\leq0 \text{ \textit{a.e.} on }\Gamma_{\mathrm{N}} \right \}$. 
To develop our strategy in that context, one should investigate the twice epi-differentiablity of~$\iota_{\mathcal{K}}$. This nontrivial part is done in the submitted paper~\cite{jdc} where our methodology has been applied to a contact problem with the Signorini's unilateral conditions in a shape optimization context.
\end{myRem}

\section{Application to optimal control}\label{section4cc}

Consider the functional framework introduced at the beginning of Section~\ref{Mainresult1}. Let $f\in \mathrm{L}^{2}(\Omega,\R^d)$, $h\in \mathrm{L}^{2}(\Gamma_{\mathrm{N}})$, $g_1\in\LL^{\infty}(\Gamma_{\mathrm{N}})$ such that $g_1\geq m$ \textit{a.e.} on $\Gamma_{\mathrm{N}}$ for some positive constant $m>0$ and~$g_2\in\LL^{\infty}(\Gamma_{\mathrm{N}})$ such that~$||g_2||_{\LL^{\infty}(\Gamma_{\mathrm{N}})}>0$. In this section we consider the optimal control problem given by
\begin{equation}\label{problemenergyfcontr}
    \minn\limits_{ \substack{ z\in \mathcal{U}}} \; \mathcal{J}(z),
\end{equation}
where $\mathcal{J}$ is the cost functional defined by
\begin{equation*}
\fonction{\mathcal{J}}{\mathrm{V}}{\R}{z}{\mathcal{J}(z):=\frac{1}{2}\left\|u(\ell(z))\right\|^{2}_{\HH^{1}_{\mathrm{D}}(\Omega,\R^d)}+\frac{\beta}{2}\left\| \ell(z) \right\|^{2}_{\LL^{2}(\Gamma_{\mathrm{N}})},}
\end{equation*}
where~$\mathrm{V}$ is the open subset of~$\LL^{\infty}(\Gamma_{\mathrm{N}})$ defined by
$$
\mathrm{V}:=\left\{ z\in\LL^{\infty}(\Gamma_{\mathrm{N}}) \mid \exists C(z)>0\text{, } \ell(z)>C(z) \text{ \textit{\textit{a.e.}} on } \Gamma_{\mathrm{N}} \right\},
$$
where $\ell$ is the map defined by $z\in\LL^{\infty}(\Gamma_{\mathrm{N}})\mapsto \ell(z):=g_1+zg_2\in \LL^{\infty}(\Gamma_{\mathrm{N}})$, and where $u(\ell(z))\in\HH^{1}_{\mathrm{D}}(\Omega,\R^d)$ stands for the unique solution to the Tresca friction problem given by
\begin{equation}\label{controlefortrescaopti}\tag{CTP\ensuremath{_{\ell(z)}}}
\arraycolsep=2pt
\left\{
\begin{array}{rcll}
-\mathrm{div}(\mathrm{A}\mathrm{e}(u)) & = & f   & \text{ in } \Omega , \\
u & = & 0  & \text{ on } \Gamma_{\mathrm{D}} ,\\
\sigma_\nn(u) & = & h  & \text{ on } \Gamma_{\mathrm{N}},\\
\left\|\sigma_\tau(u)\right\|\leq \ell(z) \text{ and } u_{\tau}\cdot\sigma_{\tau}(u)+\ell(z)\left\|u_{\tau}\right\| & = & 0  & \text{ on } \Gamma_{\mathrm{N}},
\end{array}
\right.
\end{equation}
where~$\beta > 0$ is a positive constant and where~$\mathcal{U}$ is a given nonempty convex subset of $\mathrm{V}$ such that~$\mathcal{U}$ is a bounded closed subset of~$\LL^{2}(\Gamma_{\mathrm{N}})$. Note that the first term in the cost functional $\mathcal{J}$ corresponds to the compliance, while the second term is the energy consumption which is standard in optimal control problems (see, e.g.,~\cite{MANZA}).

This section is organized as follows. In Subsection~\ref{existencemaispasunitforelatvv} we prove the existence of a solution to Problem~\eqref{problemenergyfcontr}. In Subsection~\ref{gradofthecost} we prove, under some assumptions, that $\mathcal{J}$ is Gateaux differentiable on $\mathrm{V}$ and we characterize its gradient. Finally, in Subsection~\ref{numericalsimjfkjfsdkjfsdkf}, numerical simulations are performed to solve Problem~\eqref{problemenergyfcontr} on a two-dimensional example.

\subsection{Existence of a solution}\label{existencemaispasunitforelatvv}

This section is dedicated to the following existence result.

\begin{myProp}
There exists $z^{*}\in\mathcal{U}$ such that $\mathcal{J}(z^{*})\leq \mathcal{J}(z)$ for all $z\in\mathcal{U}$.
\end{myProp}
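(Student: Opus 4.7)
The plan is to apply the direct method of the calculus of variations. Since $\mathcal{J}(z)\geq 0$ for every $z\in\mathrm{V}$, the infimum $m := \inf_{z\in\mathcal{U}}\mathcal{J}(z)$ is a nonnegative real number, and one can pick a minimizing sequence $(z_n)_{n\in\mathbb{N}}\subset\mathcal{U}$ with $\mathcal{J}(z_n)\to m$. Since $\mathcal{U}$ is bounded in $\LL^{2}(\Gamma_{\mathrm{N}})$, up to extracting a subsequence $z_n \rightharpoonup z^*$ weakly in $\LL^{2}(\Gamma_{\mathrm{N}})$; and since $\mathcal{U}$ is convex and closed in $\LL^{2}(\Gamma_{\mathrm{N}})$, hence weakly closed, one has $z^*\in\mathcal{U}\subset\mathrm{V}$. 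By linearity of $\ell$ this yields $\ell(z_n)\rightharpoonup\ell(z^*)$ in $\LL^{2}(\Gamma_{\mathrm{N}})$, and the weak lower semicontinuity of the convex continuous map $z\mapsto \frac{\beta}{2}\|\ell(z)\|_{\LL^{2}(\Gamma_{\mathrm{N}})}^{2}$ is then immediate.

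Next, set $u_n := u(\ell(z_n))$. Testing the weak formulation of~(CTP$_{\ell(z_n)}$) against $w = 0$, and using the nonnegativity of the Tresca term $\int_{\Gamma_{\mathrm{N}}} \ell(z_n)\|u_{n_\tau}\|$, yields
\begin{equation*}
\|u_n\|_{\HH^{1}_{\mathrm{D}}(\Omega,\R^d)}^{2} \leq \int_{\Omega} f\cdot u_n + \int_{\Gamma_{\mathrm{N}}} h\, u_{n_{\nn}},
\end{equation*}
which, via the continuity of the trace $\HH^{1}_{\mathrm{D}}(\Omega,\R^d)\hookrightarrow \LL^{2}(\Gamma,\R^d)$, produces a uniform bound on $\|u_n\|_{\HH^{1}_{\mathrm{D}}(\Omega,\R^d)}$ independent of $n$. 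Up to a further subsequence one has $u_n\rightharpoonup u^*$ weakly in $\HH^{1}_{\mathrm{D}}(\Omega,\R^d)$, and by compactness of the trace operator $\HH^{1}(\Omega,\R^d)\to\LL^{2}(\Gamma,\R^d)$ the traces converge strongly: $u_n|_\Gamma \to u^*|_\Gamma$ in $\LL^{2}(\Gamma,\R^d)$.

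The main obstacle is then to identify $u^* = u(\ell(z^*))$ and to upgrade the weak $\HH^{1}_{\mathrm{D}}$-convergence to a strong one. To this end I would test the variational inequality satisfied by $u_n$ against $w = u^*$, the crucial observation being that
\begin{equation*}
\int_{\Gamma_{\mathrm{N}}}\ell(z_n)\bigl(\|u^*_\tau\|-\|u_{n_\tau}\|\bigr)\longrightarrow 0,
\end{equation*}
since $\ell(z_n)\rightharpoonup\ell(z^*)$ in $\LL^{2}(\Gamma_{\mathrm{N}})$ while $\|u_{n_\tau}\|\to\|u^*_\tau\|$ strongly in $\LL^{2}(\Gamma_{\mathrm{N}})$ (by strong trace convergence and the reverse triangle inequality). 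This produces $\limsup_n \|u_n\|_{\HH^{1}_{\mathrm{D}}(\Omega,\R^d)}^{2}\leq \|u^*\|_{\HH^{1}_{\mathrm{D}}(\Omega,\R^d)}^{2}$, which combined with the weak convergence $u_n\rightharpoonup u^*$ in the Hilbert space $\HH^{1}_{\mathrm{D}}(\Omega,\R^d)$ upgrades it to a strong convergence. A second passage to the limit in the variational inequality for $u_n$ with a general test function $w\in\HH^{1}_{\mathrm{D}}(\Omega,\R^d)$ (where the nonsmooth term $\int_{\Gamma_{\mathrm{N}}}\ell(z_n)\|w_\tau\|$ passes to the limit by weak--strong pairing, and the remaining terms pass by strong $\HH^{1}_{\mathrm{D}}$-convergence of $u_n$) then identifies $u^*$ as a weak solution to~(CTP$_{\ell(z^*)}$), so that $u^* = u(\ell(z^*))$ by uniqueness (Proposition~\ref{existenceuniciteTresca}).

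Combining everything, one obtains $\|u(\ell(z_n))\|_{\HH^{1}_{\mathrm{D}}(\Omega,\R^d)}^{2}\to\|u(\ell(z^*))\|_{\HH^{1}_{\mathrm{D}}(\Omega,\R^d)}^{2}$ together with $\|\ell(z^*)\|_{\LL^{2}(\Gamma_{\mathrm{N}})}^{2}\leq\liminf_n\|\ell(z_n)\|_{\LL^{2}(\Gamma_{\mathrm{N}})}^{2}$, whence $\mathcal{J}(z^*)\leq\liminf_n\mathcal{J}(z_n) = m$, so that $z^*\in\mathcal{U}$ is the desired minimizer.
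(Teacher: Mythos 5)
Your proposal is correct and follows essentially the same route as the paper: direct method with a minimizing sequence, weak compactness and weak closedness of $\mathcal{U}$ in $\LL^{2}(\Gamma_{\mathrm{N}})$, upgrading $u_n\rightharpoonup u^*$ to strong $\HH^{1}_{\mathrm{D}}$-convergence by testing the variational inequality against $u^*$ and exploiting the compact trace embedding, identification of $u^*=u(\ell(z^*))$ by passing to the limit in the variational inequality, and weak lower semicontinuity of the $\ell$-term. The only cosmetic difference is that you obtain the uniform bound on $u_n$ by testing the variational inequality with $w=0$, whereas the paper reads it off directly from the bound $\mathcal{J}(z_i)\leq 1+\inf_{\mathcal{U}}\mathcal{J}$; both are valid.
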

\begin{proof}
In this proof the strong (resp.\ weak) convergence in Hilbert spaces is denoted by~$\rightarrow$ (resp.~\ $\rightharpoonup$) and all limits with respect to the index~$i$ will be considered for~$i \to +\infty$. Since~$0\leq \mathcal{J}(z)<+\infty$ for all $z\in\mathcal{U}$, we get that $\inf_{z\in\mathcal{U}} \mathcal{J}(z)\in\R_{+}$. Considering a minimizing sequence~$(z_{i})_{i\in\N}$, there exists $N\in\N$ such that~$\mathcal{J}(z_{i})\leq 1+ \inf_{z\in\mathcal{U}} \mathcal{J}(z)$ for all $i\geq N$, that is
$$
\frac{1}{2}\left\|u(\ell(z_i))\right\|^{2}_{\HH^{1}_{\mathrm{D}}(\Omega,\R^d)}+\frac{\beta}{2}\left\| \ell(z_i) \right\|^{2}_{\LL^{2}(\Gamma_{\mathrm{N}})}\leq 1+ \underset{z\in\mathcal{U}}{\inf}\mathcal{J}(z),
$$
for all $i\geq N$. Thus the sequence $(\ell(z_i))_{i\in\N}$ is bounded in $\LL^{2}(\Gamma_{\mathrm{N}})$ and thus, up to a subsequence that we do not relabel, weakly converges to some $g^{*}\in\LL^{2}(\Gamma_{\mathrm{N}})$. Moreover, since~$\mathcal{U}$ is a bounded closed convex subset of $\LL^{2}(\Gamma_{\mathrm{N}})$ (and thus weakly closed in $\LL^{2}(\Gamma_{\mathrm{N}})$), we know that, up to a subsequence that we do not relabel, the sequence $(z_{i})_{i\in\N}$ weakly converges to some~$z^{*}\in\mathcal{U}$. Moreover one has
$$
\left| \int_{\Gamma_{\mathrm{N}}}\left( \ell(z_i)-g_1-z^{*}g_2\right)w \right|=\int_{\Gamma_{\mathrm{N}}}\left(z_{i}-z^{*}\right)g_2w,
$$
for all~$w\in\LL^2(\Gamma_{\mathrm{N}})$, and, since $g_2\in\LL^{\infty}(\Gamma_{\mathrm{N}})$, it holds that $g_2 w\in\LL^{2}(\Gamma_{\mathrm{N}})$ and one deduces that~$\ell(z_i) \rightharpoonup g_1+z^*g_2$ in $\LL^{2}(\Gamma_{\mathrm{N}})$ and thus $g^*=g_1+z^*g_2$.
In a similar way, up to a subsequence that we do not relabel, the sequence $(u(\ell(z_i)))_{i\in\N}$ weakly converges in $\HH^{1}_{\mathrm{D}}(\Omega,\R^d)$ to some $u^{*}\in\HH^{1}_{\mathrm{D}}(\Omega,\R^d)$, thus $u(\ell(z_i))\rightarrow u^{*}$ in~$\LL^{2}(\Gamma,\R^d)$ from the compact embedding~$\HH^{1}_{\mathrm{D}}(\Omega,\R^d)\hookdoubleheadrightarrow~\mathrm{L}^{2}(\Gamma,\R^d)$ (see Proposition~\ref{injections}). Let us prove that $u(\ell(z_i))\rightarrow u^{*}$ in $\HH^{1}_{\mathrm{D}}(\Omega,\R^d)$. It holds that
$$\left\|u^{*}-u(\ell(z_i))\right\|^{2}_{\HH^{1}_{\mathrm{D}}(\Omega,\R^d)}=\dual{u^{*}}{u^{*}-u(\ell(z_i))}_{\HH^{1}_{\mathrm{D}}(\Omega,\R^d)}-\dual{u(\ell(z_i))}{u^{*}-u(\ell(z_i))}_{\HH^{1}_{\mathrm{D}}(\Omega,\R^d)},
$$
for all $i\in\N$. Using the weak formulation satisfied by $u(\ell(z_i))$, we get that
\begin{multline*}
    \left\|u^{*}-u(\ell(z_i))\right\|^{2}_{\HH^{1}_{\mathrm{D}}(\Omega,\R^d)}\leq \dual{u^{*}}{u^{*}-u(\ell(z_i))}_{\HH^{1}_{\mathrm{D}}(\Omega,\R^d)}-\int_{\Omega}f\cdot(u^{*}-u(\ell(z_i)))\\-\int_{\Gamma_{\mathrm{N}}}h\left(u^{*}_\nn-u(\ell(z_i))_\nn\right)+\int_{\Gamma_{\mathrm{N}}}\ell(z_i)\left(\left\|u^{*}_{\tau}\right\|-\left\|u(\ell(z_i))_\tau\right\|\right)\\\leq \dual{u^{*}}{u^{*}-u(\ell(z_i))}_{\HH^{1}_{\mathrm{D}}(\Omega,\R^d)}-\int_{\Omega}f\cdot(u^{*}-u(\ell(z_i)))-\int_{\Gamma_{\mathrm{N}}}h\left(u^{*}_\nn-u(\ell(z_i))_\nn\right)\\+C\left\|u^{*}-u(\ell(z_i))\right\|_{\LL^{2}(\Gamma,\R^d)}\longrightarrow0,
\end{multline*}
where $C \geq 0$ is a constant (depending only on $\Omega$ and on $\max_{i\in\N}||\ell(z_i)||_{\LL^2(\Gamma_{\mathrm{N}})}
$).
Now let us prove that $u^{*}=u(g_1+z^{*}g_2)$. For $w\in\HH^{1}_{\mathrm{D}}(\Omega,\R^d)$ fixed, it holds that 
\begin{multline}\label{equationsecondtrescabisk96}
    \dual{u(\ell(z_i))}{w-u(\ell(z_i))}_{\HH^{1}_{\mathrm{D}}(\Omega,\R^d)}+\int_{\Gamma_{\mathrm{N}}}\ell(z_i)\left\|w_\tau\right\|-\int_{\Gamma_{\mathrm{N}}}\ell(z_i)\left\|u(\ell(z_i))_\tau\right\|\\\geq \int_{\Omega}f\cdot(w-u(\ell(z_i)))+\int_{\Gamma_{\mathrm{N}}}h\left(w_\nn-u(\ell(z_i))_\nn\right),
\end{multline}
for all $i\in\N$. Note that:\\
\begin{enumerate}[label={\rm (\roman*)}] \small{
    \item $\displaystyle\left|\dual{u(\ell(z_i))}{w-u(\ell(z_i))}_{\HH^{1}_{\mathrm{D}}(\Omega,\R^d)}-\dual{u^{*}}{w-u^{*}}_{\HH^{1}_{\mathrm{D}}(\Omega,\R^d)}\right|\leq D\left\|u^{*}-u(\ell(z_i))\right\|_{\HH^{1}_{\mathrm{D}}(\Omega,\R^d)}\longrightarrow~0$;
    \item $\displaystyle\left|\int_{\Omega}f\cdot(w-u(\ell(z_i)))-\int_{\Omega}f\cdot(w-u^{*})\right|\leq D\left\|f\right\|_{\LL^{2}(\Omega,\R^d)}\left\|u^{*}-u(\ell(z_i))\right\|_{\HH^{1}_{\mathrm{D}}(\Omega,\R^d)}\longrightarrow 0$;
     \item $\displaystyle\left|\int_{\Gamma_{\mathrm{N}}}h\left(w_\nn-u(\ell(z_i))_\nn\right)-\int_{\Gamma_{\mathrm{N}}}h\left(w_\nn-u^{*}_\nn\right)\right|\leq D\left\|h\right\|_{\LL^{2}(\Gamma_{\mathrm{N}})}\left\|u^{*}-u(\ell(z_i))\right\|_{\LL^{2}(\Gamma,\R^d)}\longrightarrow 0$;
    \item  $\displaystyle \left| \int_{\Gamma_{\mathrm{N}}}\ell(z_i)\left(\left\|w_\tau\right\|-\left\|u(\ell(z_i))_\tau\right\|\right)-\int_{\Gamma_{\mathrm{N}}}g^{*}\left(\left\|w_\tau\right\|-\left\|u^{*}_{\tau}\right\|\right)\right|\leq\\\left|\int_{\Gamma_{\mathrm{N}}}(\ell(z_i)-g^{*})\left\|w_\tau\right\|\right|+\left|\int_{\Gamma_{\mathrm{N}}}(\ell(z_i)-g^{*})\left\|u^{*}_{\tau}\right\|\right|+D\left\|u^{*}-u(\ell(z_i))\right\|_{\LL^{2}(\Gamma,\R^d)}\longrightarrow 0;$}\\
\end{enumerate}
where $D\geq0$ is a constant (depending only on $\Omega$, $\mathrm{A}$ and $w$). Therefore it follows in~\eqref{equationsecondtrescabisk96} when~$i\to +\infty$ that
$$
 \dual{u^{*}}{w-u^{*}}_{\HH^{1}_{\mathrm{D}}(\Omega,\R^d)}+\int_{\Gamma_{\mathrm{N}}}g^{*}\left\|w_\tau\right\|-\int_{\Gamma_{\mathrm{N}}}g^{*}\left\|u^{*}_{\tau}\right\|\geq\int_{\Omega}f\cdot(w-u^{*})+\int_{\Gamma_{\mathrm{N}}}h\left(w_\nn-u^{*}_\nn\right).
$$
Since this inequality is true for all $w\in\HH^{1}_{\mathrm{D}}(\Omega,\R^d)$ and $g^*=g_1+z^{*}g_2$, one deduces that~$u^{*}=u(g_1+z^{*}g_2)$, and then
\begin{multline*}
    \mathcal{J}(z^{*})=\frac{1}{2}\left\|u(g_1+z^{*}g_2)\right\|^{2}_{\HH^{1}_{\mathrm{D}}(\Omega,\R^d)}+\frac{\beta}{2}\left\| g_1+z^{*}g_2 \right\|^{2}_{\LL^{2}(\Gamma_{\mathrm{N}})}\leq\\    \liminf\limits_{i\to +\infty}\left(\frac{1}{2}\left\|u(\ell(z_i))\right\|^{2}_{\HH^{1}_{\mathrm{D}}(\Omega,\R^d)}+\frac{\beta}{2}\left\| \ell(z_i) \right\|^{2}_{\LL^{2}(\Gamma_{\mathrm{N}})}\right)\leq \liminf\limits_{i\to +\infty} \mathcal{J}(z_{i})=\underset{z\in\mathcal{U}}{\inf}\mathcal{J}(z),
\end{multline*}
which concludes the proof.
\end{proof}

\begin{myRem}\normalfont
Since the solution to the Tresca friction problem is not linear with respect to the friction term, note that $\mathcal{J}$ is not a strictly convex functional (and thus the uniqueness of the solution to Problem~\eqref{problemenergyfcontr} is not guaranteed).
\end{myRem}

\subsection{Gateaux differentiability of the cost functional}\label{gradofthecost}
Consider the auxiliary functional
\begin{equation*}
    \fonction{\mathrm{J}}{\HH^{1}_{\mathrm{D}}(\Omega,\R^d)\times\LL^{\infty}(\Gamma_{\mathrm{N}})}{\R}{(v,g)}{\mathrm{J}(v,g):=\frac{1}{2}\left\|v\right\|^{2}_{\HH^{1}_{\mathrm{D}}(\Omega,\R^d)}+\frac{\beta}{2}\left\| g \right\|^{2}_{\LL^{2}(\Gamma_{\mathrm{N}}).}}
\end{equation*}
One can easily prove that $\mathrm{J}$ is Fréchet differentiable on $\HH^{1}_{\mathrm{D}}(\Omega,\R^d)\times\LL^{\infty}(\Gamma_{\mathrm{N}})$ and its Fréchet differential at some $(v,g)\in\HH^{1}_{\mathrm{D}}(\Omega,\R^d)\times\LL^{\infty}(\Gamma_{\mathrm{N}})$, denoted by $\mathrm{d}\mathrm{J}(v,g)$, is given by
$$
\mathrm{d}\mathrm{J}(v,g)(\tilde{v},\tilde{g})=\dual{v}{\tilde{v}}_{\HH^{1}_{\mathrm{D}}(\Omega,\R^d)}+\beta\dual{g}{\tilde{g}}_{\LL^{2}(\Gamma_{\mathrm{N}})},
$$
for all $(\tilde{v},\tilde{g})\in\HH^{1}_{\mathrm{D}}(\Omega,\R^d)\times\LL^{\infty}(\Gamma_{\mathrm{N}}).$
Now let us introduce the map 
\begin{equation*}
    \fonction{\mathcal{F}}{\mathrm{V}}{\HH^{1}_{\mathrm{D}}(\Omega,\R^d)\times \LL^{\infty}(\Gamma_{\mathrm{N}}) }{z}{\mathcal{F}(z):=\left(u(\ell(z)),\ell(z)\right),}
\end{equation*}
where $u(\ell(z))\in\HH^{1}_{\mathrm{D}}(\Omega,\R^d)$ is the unique solution to the Tresca friction problem~\eqref{controlefortrescaopti}. Hence the cost functional $\mathcal{J}$ is given by the composition $\mathcal{J}=\mathrm{J}\circ\mathcal{F}$. 

\begin{myTheorem}\label{gradientdelafonccoutgg}
Let $z_0\in\mathrm{V}$ be fixed and let us denote by $u_0:=u(\ell(z_0))$. Assume that:
\begin{enumerate}[label={\rm (\roman*)}]
        \item\label{bxwww45} the map $s\in\Gamma_{\mathrm{N}^{u_0,\ell(z_0)}_{\mathrm{R}}}\mapsto \frac{\ell(z_0)(s)}{\left\|{u_0}_{\tau}(s)\right\|}\in\mathbb{R^{*}_{+}}$ belongs to $\LL^{4}(\Gamma_{\mathrm{N}^{u_0,\ell(z_0)}_{\mathrm{R}}})$ (see below for definition of the set $\Gamma_{\mathrm{N}^{u_0,\ell(z_0)}_{\mathrm{R}}}$);
        \item\label{bxwww456} the parameterized Tresca friction functional $\Phi$ defined in~\eqref{fonctionnelledeTrescaparacas2} is twice epi-differentiable at~$u_{0}$ for $F-u_{0}\in\partial \Phi(0,\cdot)(u_{0})$, with
\begin{equation*}
\displaystyle\mathrm{D}_{e}^{2}\Phi(u_{0})|F-u_{0})(w)=\int_{\Gamma_{\mathrm{N}}}\mathrm{D}_{e}^{2}G(s)(u_{0}(s)|\sigma_{\tau}(F-u_{0})(s))(w(s))\, \mathrm{d}s, \qquad \forall w\in \HH^{1}_{\mathrm{D}}(\Omega,\R^d),
\end{equation*} 
where, for almost all $s\in\Gamma_{\mathrm{N}}$, the map $G(s)$ is defined in Proposition~\ref{epidiffoffunctionG}, and $F\in\HH^{1}_{\mathrm{D}}(\Omega,\R^d)$ is the unique solution to the Dirichlet-Neumann problem
\begin{equation}\label{FFFFFF111}
\arraycolsep=2pt
\left\{
\begin{array}{rcll}
-\mathrm{div}(\mathrm{A}\mathrm{e}(F)) & = & f   & \text{ in } \Omega , \\
F & = & 0  & \text{ on } \Gamma_{\mathrm{D}} ,\\
\mathrm{A}\mathrm{e}(F)\nn & = & h\nn  & \text{ on } \Gamma_{\mathrm{N}}.
\end{array}
\right.
\end{equation}
\end{enumerate}
Then the cost functional $\mathcal{J}$ is Gateaux differentiable at~$z_{0}$ and its differential~$\mathrm{d}_{G}\mathcal{J}(z_{0})$ is given by
$$
\mathrm{d}_{G}\mathcal{J}(z_{0})(z)=\int_{\Gamma_{\mathrm{N}^{u_0,\ell(z_0)}_{\mathrm{R}}}}zg_2\left(\beta\left( g_1+z_0 g_2\right)-\left\|u_{0_\tau}\right\|\right)+\int_{\Gamma_{\mathrm{N}^{u_0,\ell(z_0)}_{\mathrm{T}}}\cup\Gamma_{\mathrm{N}^{u_0,\ell(z_0)}_{\mathrm{S}}}}\beta zg_2\left(g_1+z_0g_2\right),
$$
for all $z\in\LL^{\infty}(\Gamma_{\mathrm{N}})$, where $\Gamma_{\mathrm{N}}$ is decomposed (up to a null set) as $\Gamma_{\mathrm{N}^{u_0,\ell(z_0)}_{\mathrm{T}}}\cup\Gamma_{\mathrm{N}^{u_0,\ell(z_0)}_{\mathrm{R}}}\cup\Gamma_{\mathrm{N}^{u_0,\ell(z_0)}_{\mathrm{S}}}$ with
$$
\begin{array}{l}
\Gamma_{\mathrm{N}^{u_0,\ell(z_0)}_{\mathrm{R}}}:=\left\{s\in\Gamma_{\mathrm{N}} \mid  u_{0_\tau}(s)\neq0\right \}, \\
\Gamma_{\mathrm{N}^{u_0,\ell(z_0)}_{\mathrm{T}}}:=\left\{s\in\Gamma_{\mathrm{N}} \mid  u_{0_\tau}(s)=0 \text{ and } \frac{\sigma_{\tau}(u_0)(s)}{\ell(z_0)(s)}\in\mathrm{B}(0,1)\cap\left(\R\nn(s)\right)^{\perp}\right\}, \\
\Gamma_{\mathrm{N}^{u_0,\ell(z_0)}_{\mathrm{S}}}:=\left\{s\in\Gamma_{\mathrm{N}} \mid  u_{0_\tau}(s)=0 \text{ and } \frac{\sigma_{\tau}(u_0)(s)}{\ell(z_0)(s)}\in\partial{\mathrm{B}(0,1)}\cap\left(\R\nn(s)\right)^{\perp}\right\}.
\end{array}
$$
\end{myTheorem}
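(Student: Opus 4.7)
The plan is to apply the chain rule to the decomposition $\mathcal{J} = \mathrm{J} \circ \mathcal{F}$. Since $\mathrm{J}$ is Fréchet differentiable on $\HH^{1}_{\mathrm{D}}(\Omega,\R^d) \times \LL^{\infty}(\Gamma_{\mathrm{N}})$ with the differential recalled just above the statement, the task reduces to establishing the Gateaux differentiability of $\mathcal{F}$ at $z_0$. The affine component $z \mapsto \ell(z)$ has differential $z \mapsto zg_2$ at every point; for the component $z \mapsto u(\ell(z))$, I would apply Theorem~\ref{caractu0derivDNT} to the one-parameter family $t \mapsto u(\ell(z_0 + tz))$. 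This corresponds to taking $f_t := f$, $h_t := h$, and $g_t := \ell(z_0) + t\,zg_2$, so that $f'_0 = 0$, $h'_0 = 0$, and $g'_0 = zg_2 \in \LL^{\infty}(\Gamma_{\mathrm{N}}) \subset \LL^{2}(\Gamma_{\mathrm{N}})$, verifying hypotheses (i)--(iii) of Theorem~\ref{caractu0derivDNT}; hypotheses (iv)--(v) there are exactly assumptions~(i) and (ii) of the present statement, and openness of $\mathrm{V}$ guarantees $z_0 + tz \in \mathrm{V}$ for $t$ small. The chain rule then yields
$$
\mathrm{d}_{G}\mathcal{J}(z_0)(z) \;=\; \dual{u_0}{u'_0}_{\HH^{1}_{\mathrm{D}}(\Omega,\R^d)} + \beta\int_{\Gamma_{\mathrm{N}}}\ell(z_0)\,zg_2,
$$
where $u'_0 \in \HH^{1}_{\mathrm{D}}(\Omega,\R^d)$ is the weak solution to the tangential Signorini problem~\eqref{caractu0DNT} associated with these data.

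The remaining task is to evaluate the pairing $\dual{u_0}{u'_0}_{\HH^{1}_{\mathrm{D}}(\Omega,\R^d)}$ explicitly. My plan is to exploit the weak formulation~\eqref{FaibleSignorini} satisfied by $u'_0$ on the admissible convex cone $\mathcal{K}^{1}(\Omega,\R^d)$, by testing successively with $w \in \{0,\, 2u'_0,\, u_0,\, 2u'_0 - u_0\}$. All four lie in $\mathcal{K}^{1}(\Omega,\R^d)$: the first two by the cone structure, and the last two because $u_0$ itself satisfies $u_{0_\tau} = 0$ on $\Gamma_{\mathrm{N}^{u_0,\ell(z_0)}_{\mathrm{T}}} \cup \Gamma_{\mathrm{N}^{u_0,\ell(z_0)}_{\mathrm{S}}}$. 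Combining the two opposite inequalities coming from $w = 0$ and $w = 2u'_0$ yields the energy identity $\|u'_0\|^{2}_{\HH^{1}_{\mathrm{D}}(\Omega,\R^d)} = L(u'_0)$, where $L$ denotes the linear functional defined by the right-hand side of~\eqref{FaibleSignorini} minus the $\dual{u'_0}{\cdot}$ term; analogously, $w = u_0$ and $w = 2u'_0 - u_0$ give $\dual{u'_0}{u_0}_{\HH^{1}_{\mathrm{D}}(\Omega,\R^d)} = L(u_0)$. Reading off the generic Signorini data from~\eqref{caractu0DNT}, namely $k = \ell(z_0)/\|u_{0_\tau}\|$, $v_\tau = -u_{0_\tau}/\|u_{0_\tau}\|$, $\ell = g'_0$ on $\Gamma_{\mathrm{N}^{u_0,\ell(z_0)}_{\mathrm{R}}}$ and $v_\tau = \sigma_{\tau}(u_0)/\ell(z_0)$, $\ell = g'_0$ on $\Gamma_{\mathrm{N}^{u_0,\ell(z_0)}_{\mathrm{S}}}$, the quantity $L(u_0)$ simplifies drastically: the $\Gamma_{\mathrm{N}^{u_0,\ell(z_0)}_{\mathrm{S}}}$-contribution vanishes because $u_{0_\tau} = 0$ there, and on $\Gamma_{\mathrm{N}^{u_0,\ell(z_0)}_{\mathrm{R}}}$ the tangential projection term $(u'_{0_\tau} - (u'_{0_\tau}\cdot\tfrac{u_{0_\tau}}{\|u_{0_\tau}\|})\tfrac{u_{0_\tau}}{\|u_{0_\tau}\|}) \cdot u_{0_\tau}$ is zero by Pythagorean orthogonality, so that only the source contribution $-g'_0\,(u_{0_\tau}/\|u_{0_\tau}\|)\cdot u_{0_\tau} = -g'_0\|u_{0_\tau}\|$ remains. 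This yields
$$
\dual{u_0}{u'_0}_{\HH^{1}_{\mathrm{D}}(\Omega,\R^d)} \;=\; -\int_{\Gamma_{\mathrm{N}^{u_0,\ell(z_0)}_{\mathrm{R}}}} zg_2\,\|u_{0_\tau}\|,
$$
and substituting back, together with the splitting $\Gamma_{\mathrm{N}} = \Gamma_{\mathrm{N}^{u_0,\ell(z_0)}_{\mathrm{T}}} \cup \Gamma_{\mathrm{N}^{u_0,\ell(z_0)}_{\mathrm{R}}} \cup \Gamma_{\mathrm{N}^{u_0,\ell(z_0)}_{\mathrm{S}}}$, produces exactly the announced expression for $\mathrm{d}_{G}\mathcal{J}(z_0)(z)$.

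The main obstacle will be in the second paragraph: correctly identifying the generic Signorini coefficients $(k, v_\tau, \ell)$ of~\eqref{FaibleSignorini} piecewise from the mixed boundary conditions of~\eqref{caractu0DNT}, verifying admissibility of all four test functions in $\mathcal{K}^{1}(\Omega,\R^d)$ (which crucially requires $u_0 \in \mathcal{K}^{1}(\Omega,\R^d)$ itself), and exploiting the orthogonality that makes the coefficient $k$ disappear when pairing against $u_0$. These cancellations are precisely what collapse the pairing $\dual{u_0}{u'_0}_{\HH^{1}_{\mathrm{D}}(\Omega,\R^d)}$ to a single clean boundary integral involving only $zg_2$ and $\|u_{0_\tau}\|$, hence producing the clean closed-form expression stated in the theorem.
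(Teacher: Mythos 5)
Your proposal follows essentially the same route as the paper: apply Theorem~\ref{caractu0derivDNT} to the family $g_t:=\ell(z_0)+t\,zg_2$ (with $f'_0=h'_0=0$ and $g'_0=zg_2$), compose with the Fr\'echet differentiable $\mathrm{J}$, and then collapse the pairing $\dual{u'_0}{u_0}_{\HH^{1}_{\mathrm{D}}(\Omega,\R^d)}$ to $-\int_{\Gamma_{\mathrm{N}^{u_0,\ell(z_0)}_{\mathrm{R}}}}zg_2\left\|u_{0_\tau}\right\|$ via admissible test functions and the orthogonality of the tangential projection term against $u_{0_\tau}$; your choice of the four test functions $0$, $2u'_0$, $u_0$, $2u'_0-u_0$ is equivalent to the paper's use of $u'_0\pm u_0$ (both rest on $u_{0_\tau}=0$ on $\Gamma_{\mathrm{N}^{u_0,\ell(z_0)}_{\mathrm{T}}}\cup\Gamma_{\mathrm{N}^{u_0,\ell(z_0)}_{\mathrm{S}}}$). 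One imprecision in your framing: $\mathcal{F}$ is \emph{not} Gateaux differentiable at $z_0$, since its directional derivative $z\mapsto(u'_0(z),zg_2)$ is not linear in $z$; only the one-sided directional derivative of $\mathcal{J}$ exists a priori, and the Gateaux differentiability of $\mathcal{J}$ is recovered only a posteriori because the explicit formula you derive turns out to be linear and continuous in $z$ — this final verification should be stated rather than assumed at the outset.
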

\begin{proof}
Let $z\in\LL^{\infty}(\Gamma_{\mathrm{N}})$ and $t>0$ be sufficiently small such that $z_{t}:=z_{0}+tz\in \mathrm{V}$. We denote by~$u_t:=u(\ell(z_t))\in\HH^{1}_{\mathrm{D}}(\Omega,\R^d)$. From Subsection~\ref{subtresca},~$u_t\in\HH^{1}_{\mathrm{D}}(\Omega,\R^d)$ is given by~$u_{t}=\mathrm{prox}_{\Phi(t,\cdot)}(F)$, where $\Phi$ is the parameterized Tresca friction functional defined in~\eqref{fonctionnelledeTrescaparacas2} and~$F$ is the unique solution to the Dirichlet-Neumann problem~\eqref{FFFFFF111}. From Hypotheses~\ref{bxwww45},~\ref{bxwww456} and since the map~$t\in\mathbb{R}_{+}\mapsto \ell(z_t)\in \LL^{\infty}(\Gamma_{\mathrm{N}})$ is differentiable at~$t=0$, with its derivative given by~$\ell'(z_0):=zg_2$, one can apply Theorem~\ref{caractu0derivDNT} to deduce that the map~$t\in\mathbb{R}_{+}\mapsto u_{t}\in\HH^{1}_{\mathrm{D}}(\Omega,\R^d)$ is differentiable at~$t=0$ and its derivative, denoted by~$u'_{0}\in\mathcal{K}_{u_{0},\frac{\sigma_{\tau}(F_{0}-u_{0})}{\ell(z_0)}}\subset\HH^{1}_{\mathrm{D}}(\Omega,\R^d)$, is the unique solution to the variational inequality (which is the weak formulation of a tangential Signorini problem) given by
\begin{multline*}
    \dual{u'_0}{w-u'_0}_{\HH^{1}_{\mathrm{D}}(\Omega,\R^d)}\geq\int_{\Gamma_{\mathrm{N}^{u_0,\ell(z_0)}_{\mathrm{S}}}}\ell'(z_0)\frac{\sigma_{\tau}\left(u_0\right)}{\ell(z_0)}\cdot \left(w_\tau-u'_{0_{\tau}}\right)\\+\int_{\Gamma_{\mathrm{N}^{u_0,\ell(z_0)}_{\mathrm{R}}}}\left(-\ell'(z_0)\frac{u_{{0_\tau}}}{\left\|u_{{0_\tau}}\right\|}-\frac{\ell(z_0)}{\left\|u_{{0_\tau}}\right\|}\left( u'_{0_{\tau}}-\left(u'_{0_\tau}\cdot \frac{u_{{0_\tau}}}{\left\|u_{{0_\tau}}\right\|}\right)\frac{u_{{0_\tau}}}{\left\|u_{{0_\tau}}\right\|} \right)\right)\cdot \left(w_\tau-u'_{0_{\tau}}\right),
\end{multline*}
for all $w\in\mathcal{K}_{u_{0},\frac{\sigma_{\tau}(F_{0}-u_{0})}{\ell(z_0)}}$, where
\begin{multline*}
    \mathcal{K}_{u_{0},\frac{\sigma_{\tau}(F_{0}-u_{0})}{\ell(z_0)}}:=\biggl\{ w\in \HH^{1}_{\mathrm{D}}(\Omega,\R^d)\mid w_\tau=0 \text{ \textit{\textit{a.e.}} on } \Gamma_{\mathrm{N}^{u_0,\ell(z_0)}_{\mathrm{T}}} \\ \text{ and } w_\tau\in\R_{-}\frac{\sigma_{\tau}(u_0)}{\ell(z_0)}\text{ \textit{\textit{a.e.}} on } \Gamma_{\mathrm{N}^{u_0,\ell(z_0)}_{\mathrm{S}}} \biggl\}.
\end{multline*}
Since~$\mathcal{J}=\mathrm{J}\circ \mathcal{F}$, with $\mathrm{J}$ Fréchet differentiable on $\HH^{1}_{\mathrm{D}}(\Omega,\R^d)\times \mathrm{V}$, and
$$ \frac{\left\|\mathcal{F}(z_{0}+tz)-\mathcal{F}(z_{0})-t\left(u'_{0},\ell'(z_0)\right)\right\|_{\HH^{1}_{\mathrm{D}}(\Omega,\R^d)\times\LL^{\infty}(\Gamma_{\mathrm{N}})}}{t}=\frac{\left\|u_{t}-u_{0}-tu'_{0}\right\|_{\HH^{1}_{\mathrm{D}}(\Omega,\R^d)}}{t}\longrightarrow 0,
$$
when $t\rightarrow0^+$, we deduce that $\mathcal{J}$ has a right derivative at $z_{0}$ in the direction $z$ given by
$$
\mathcal{J}'(z_{0})(z)=\dual{u'_{0}}{{u_{0}}}_{\HH^{1}_{\mathrm{D}}(\Omega,\R^d)}+\beta\dual{\ell(z_0)}{\ell'(z_0)}_{\LL^{2}(\Gamma_{\mathrm{N}})}.
$$
Furthermore, since $u'_{0}\pm u_{0}\in\mathcal{K}_{u_{0},\frac{\sigma_{\tau}(F-u_{0})}{\ell(z_0)}}$, one deduces that
$$
    \dual{u'_{0}}{u_0}_{\HH^{1}_{\mathrm{D}}(\Omega,\R^d)}=\int_{\Gamma_{\mathrm{N}^{u_0,\ell(z_0)}_{\mathrm{R}}}}\left(-\ell'(z_0)\frac{u_{{0_\tau}}}{\left\|u_{{0_\tau}}\right\|}-\frac{\ell(z_0)}{\left\|u_{{0_\tau}}\right\|}\left( {u'_{0}}_\tau-\left({u'_{0}}_\tau\cdot \frac{u_{{0_\tau}}}{\left\|u_{{0_\tau}}\right\|}\right)\frac{u_{{0_\tau}}}{\left\|u_{{0_\tau}}\right\|} \right)\right)\cdot u_{{0_{\tau}}}.
$$
Since
$$
\int_{\Gamma_{\mathrm{N}^{u_0,\ell(z_0)}_{\mathrm{R}}}}\frac{\ell(z_0)}{\left\|u_{{0_\tau}}\right\|}\left( {u'_{0}}_\tau-\left({u'_{0}}_\tau\cdot \frac{u_{{0_\tau}}}{\left\|u_{{0_\tau}}\right\|}\right)\frac{u_{{0_\tau}}}{\left\|u_{{0_\tau}}\right\|} \right)\cdot u_{{0_{\tau}}}=0,
$$
we get that
$$
\dual{u'_{0}}{u_0}_{\HH^{1}_{\mathrm{D}}(\Omega,\R^d)}=-\int_{\Gamma_{\mathrm{N}^{u_0,\ell(z_0)}_{\mathrm{R}}}}\ell'(z_0)\left\|u_{{0_\tau}}\right\|,
$$
and we can rewrite the right derivative of $\mathcal{J}$ at $z_{0}$ in the direction $z$ as
\begin{multline*}
    \mathcal{J}'(z_{0})(z)=-\int_{\Gamma_{\mathrm{N}^{u_0,\ell(z_0)}_{\mathrm{R}}}}\ell'(z_0)\left\|u_{{0_\tau}}\right\|+\int_{\Gamma_{\mathrm{N}}}\beta \ell'(z_0)\ell(z_0) \\ = \int_{\Gamma_{\mathrm{N}^{u_0,\ell(z_0)}_{\mathrm{R}}}}\ell'(z_0)\left(\beta \ell(z_0)-\left\|u_{{0_\tau}}\right\|\right)+\int_{\Gamma_{\mathrm{N}^{u_0,\ell(z_0)}_{\mathrm{T}}}\cup\Gamma_{\mathrm{N}^{u_0,\ell(z_0)}_{\mathrm{S}}}}\beta \ell'(z_0)\ell(z_0),
\end{multline*}
and thus
$$
\mathcal{J}'(z_{0})(z)=\int_{\Gamma_{\mathrm{N}^{u_0,\ell(z_0)}_{\mathrm{R}}}}zg_2\left(\beta \left(g_1+z_0g_2\right)-\left\|u_{{0_\tau}}\right\|\right)+\int_{\Gamma_{\mathrm{N}^{u_0,\ell(z_0)}_{\mathrm{T}}}\cup\Gamma_{\mathrm{N}^{u_0,\ell(z_0)}_{\mathrm{S}}}}\beta zg_2\left(g_1+z_0g_2\right).
$$
Note that $\mathcal{J}'(z_{0})$ is linear and continuous on $\LL^{\infty}(\Gamma_{\mathrm{N}})$. Thus $\mathcal{J}$ is Gateaux differentiable at $z_{0}$ with its Gateaux differential given by $\mathrm{d}_{G}\mathcal{J}(z_{0}):=\mathcal{J}'(z_{0})$. The proof is complete.
\end{proof}

\begin{myRem}\normalfont
    In the proof of Theorem~\ref{gradientdelafonccoutgg}, note that the derivative~$u'_0$ depends on the pair $(\ell(z_0),\ell'(z_0)) = (g_1+z_0 g_2,zg_2)$ and thus on the term~$z \in\LL^{\infty}(\Gamma_{\mathrm{N}})$. Therefore let us denote by~$u'_0 := u'_0(z)$. Note that~$u'_0(z)$ is not linear with respect to~$z$. However one can observe that the scalar product~$ \dual{u'_{0}(z)}{u_0}_{\HH^{1}_{\mathrm{D}}(\Omega,\R^d)}$, that appears in the proof of Theorem~\ref{gradientdelafonccoutgg}, is linear with respect to~$z$. Therefore it leads to an expression of $\mathcal{J}'(z_{0})$ that is linear with respect to~$z$, and thus to the Gateaux differentiability of $\mathcal{J}$ at $z_{0}$.
\end{myRem}

\subsection{Numerical simulations}\label{numericalsimjfkjfsdkjfsdkf}
Let us assume that~$||g_2||_{\LL^{\infty}(\Gamma_{\mathrm{N}})}<m$, where~$m>0$ is the constant introduced at the beginning of Section~\ref{section4cc}, and consider the admissible set~$ \mathcal{U}$ given by
 $$
 \mathcal{U}:=\left\{ z\in\LL^{2}(\Gamma_{\mathrm{N}}) \mid -1\leq z\leq 1 \text{ \textit{\textit{a.e.}} on } \Gamma_{\mathrm{N}} \right\},
$$
which is a nonempty convex subset of $\mathrm{V}$ and is a bounded closed subset of $\LL^{2}(\Gamma_{\mathrm{N}})$. In this subsection our aim is to numerically solve an example of Problem~\eqref{problemenergyfcontr} in the two-dimensional case $d=2$, by making use of our theoretical result obtained in Theorem~\ref{gradientdelafonccoutgg}. 

\subsubsection{Numerical methodology}\label{methodnum22XXw4545}
Starting with an initial control $z_0\in\mathcal{U}$, we compute $z_d\in\LL^{\infty}(\Gamma_{\mathrm{N}})$ given by
\begin{equation*}
z_d:=
\left\{
\begin{array}{lcll}
-g_2\left(\beta\left(g_1+z_0 g_2\right)-\left\|{u_0}_\tau\right\|\right)	&   & \text{ on } \Gamma_{\mathrm{N}^{u_0,\ell(z_0)}_{\mathrm{R}}}, \\
-\beta g_2 \left(g_1+z_0 g_2\right)	&   & \text{ on } \Gamma_{\mathrm{N}^{u_0,\ell(z_0)}_{\mathrm{T}}}\cup\Gamma_{\mathrm{N}^{u_0,\ell(z_0)}_{\mathrm{S}}},
\end{array}
\right.
\end{equation*}
which is, from Theorem~\ref{gradientdelafonccoutgg}, a descent direction of the functional~$\mathcal{J}$ at $z_0$ since it satisfies
\begin{multline*}
    \mathrm{d}_{G}\mathcal{J}(z_0)(z_d)=-||g_2\left(\beta\left(g_1+z_0 g_2\right)-\left\|{u_0}_\tau\right\|\right)||^2_{\LL^2(\Gamma_{\mathrm{N}^{u_0,\ell(z_0)}_{\mathrm{R}}})}\\-||\beta g_2 \left(g_1+z_0 g_2\right) ||^2_{\LL^2(\Gamma_{\mathrm{N}^{u_0,\ell(z_0)}_{\mathrm{T}}}\cup\Gamma_{\mathrm{N}^{u_0,\ell(z_0)}_{\mathrm{S}}})}\leq0.
\end{multline*}
Then the control is updated as $z_1=\mathrm{proj}_{\mathcal{U}}\left(z_0+\eta z_d\right)$, where $\eta>0$ is a fixed parameter and $\mathrm{proj}_{\mathcal{U}}$ is the classical projection operator onto $\mathcal{U}$ considered in $\LL^2(\Gamma_{\mathrm{N}})$. Then the algorithm restarts with~$z_1$, and so on.

 Let us mention that the numerical simulations have been performed using Freefem++ software~\cite{HECHT} with P1-finite elements and standard affine mesh. The Tresca friction problem is numerically solved using an adaptation of iterative switching algorithms (this adaptation is close to the one described in~\cite[Appendix C]{4ABC} which concerns a scalar Tresca friction problem). We also precise that, for all~$i\in~\mathbb{N}^{*}$, the difference between the cost functional $\mathcal{J}$ at the iteration~$20\times i$ and at the iteration~$20\times (i-1)$ is computed. The smallness of this difference is used as a stopping criterion for the algorithm.

\begin{myRem}\normalfont
In this paper, to numerically solve the Tresca friction problem, we used an iterative switching algorithm since it is an easily implementable method. Nevertheless there exist many different algorithms in the literature to numerically solve the Tresca friction problem: Nitsche's method (see, e.g.,~\cite{CHOULYTRESCA,CHOULY55}), mixed methods (see, e.g.,~\cite{HAS}), etc. These algorithms could be more efficient and investigations to compare them with the iterative switching algorithm need to be carried out in order to study their advantages and drawbacks. However this interesting perspective for further research works is beyond the scope of the present paper.
\end{myRem}

\subsubsection{Example and numerical results}\label{exempldelasimupourcontroltres}
In this subsection take~$d=2$ and let~$\Omega$ be the unit disk of $\R^{2}$ with its boundary $\Gamma:=\partial\Omega$ decomposed as $\Gamma=\Gamma_{\mathrm{D}}\cup\Gamma_{\mathrm{N}}$ (see Figure~\ref{figurebcxhfhgfq}), where
$$
\begin{array}{l}
\Gamma_{\mathrm{D}}:=\left\{(\cos\theta,\sin\theta)\in\Gamma \mid 0\leq\theta\leq\frac{\pi}{2}\right\}, \\
\Gamma_{\mathrm{N}}:=\left\{(\cos\theta,\sin\theta)\in\Gamma \mid \frac{\pi}{2}<\theta<2\pi\right\}.
\end{array}
$$
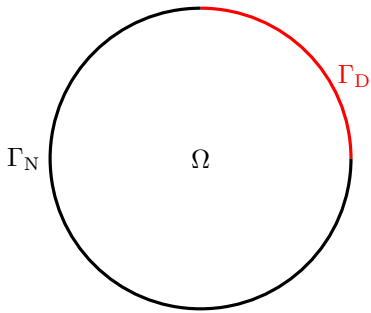
\begin{figure}[ht]
    \centering
\begin{tikzpicture}
\draw (0,0) node{$\Omega$};
\draw [color=red, very thick] (2,0) arc (0:90:2);
\draw [color=black, very thick](0,2) arc(90:360:2);
\draw (1.7,1.1) [color=red] node[right]{$\Gamma_{\mathrm{D}}$};
\draw (-2,0) [color=black] node[left]{$\Gamma_{\mathrm{N}}$};
\end{tikzpicture}
    \caption{Unit disk $\Omega$ and its boundary $\Gamma=\Gamma_{\mathrm{D}}\cup\Gamma_{\mathrm{N}}$.}\label{figurebcxhfhgfq}
\end{figure}

We assume that $\Omega$ is \textit{isotropic}, in the sense that the Cauchy stress tensor is given by
 $$
 \sigma(w)=2\mu\mathrm{e}(w)+\lambda \mathrm{tr}\left(\mathrm{e}(w)\right)\mathrm{I},
 $$
 for all~$w\in\HH^{1}_{\mathrm{D}}(\Omega,\R^d)$, where $\mathrm{tr}\left(\mathrm{e}(w)\right)$ is the trace of the matrix $\mathrm{e}(w)$ and where $\mu\geq0$ and~$\lambda\geq0$ are Lamé parameters (see, e.g.,~\cite{SALEN}). In what follows we take $\mu=0.3846$ and~$\lambda=0.5769$. This corresponds to a Young's modulus equal to $1$ and to a Poisson's ratio equal to $0.3$, which is a typical value for a large variety of materials. Let us consider the arbitrary functions $h:=0$ \textit{\textit{a.e.}} on $\Gamma_{\mathrm{N}}$, $g_1:=2$ \textit{\textit{a.e.}} on $\Gamma_{\mathrm{N}}$, $g_2\in\LL^2(\Gamma_{\mathrm{N}})$ be the function defined by
$$
\fonction{g_2}{\Gamma_{\mathrm{N}}}{\R}{(x,y)}{\displaystyle g_2(x,y):= x^2-y^2,}
$$
and $f\in\LL^{2}(\Omega,\R^2)$ be the function defined by 
$$
\fonction{f}{\Omega}{\R^2}{(x,y)}{\displaystyle f(x,y):= \begin{pmatrix}
\frac{5-x^{2}-y^{2}+xy}{4} & \frac{5-x^{2}-y^{2}+xy}{4}
\end{pmatrix}.}
$$
With $m:=2$, one has $g_1\geq m$ \textit{\textit{a.e.}} on $\Gamma_{\mathrm{N}}$ and $0<||g_2||_{\LL^{\infty}(\Gamma_{\mathrm{N}})}< m$, thus the assumptions from the beginning of Section~\ref{section4cc} and from Subsection~\ref{numericalsimjfkjfsdkjfsdkf} are satisfied. We consider the initial control~$z_0\in\mathcal{U}$ given by
$$
\fonction{z_0}{\Gamma_{\mathrm{N}}}{\R}{(x,y)}{\displaystyle z_0(x,y):= \cos{(x^2-y^2)}.}
$$

We present now the numerical results obtained for the above two-dimensional example using the numerical methodology described in Subsection~\ref{methodnum22XXw4545}. Figure~\ref{problemenergyfcofdfsntr} depicts the control which solves Problem~\eqref{problemenergyfcontr}. It is a \textit{bang–bang} optimal control, that takes exclusively the two values $-1$ and $1$ on the boundary $\Gamma_{\mathrm{N}}$. Figure~\ref{figure2contr} shows the evolution of the value of $\mathcal{J}$ with respect to the iteration. We observe an usual decreasing of the cost functional $\mathcal{J}$ with respect to the iteration. 

\begin{figure}[h!]
    \centering
    \includegraphics[scale=0.5]{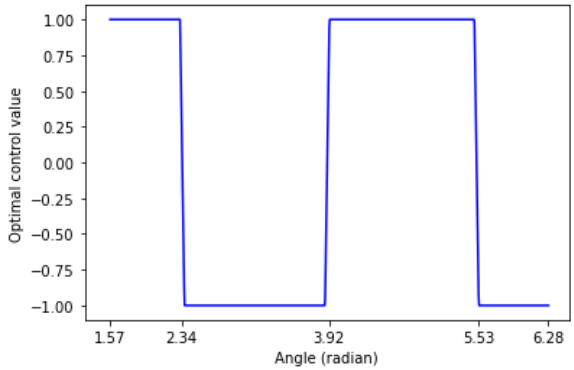}
    \caption{Values of the optimal control on the boundary $\Gamma_{\mathrm{N}}:=\left\{(\cos\theta,\sin\theta)\in\Gamma \mid \frac{\pi}{2}<\theta<2\pi\right\}$.}\label{problemenergyfcofdfsntr}
\end{figure}
\begin{figure}[h!]
    \centering
    \includegraphics[scale=0.6]{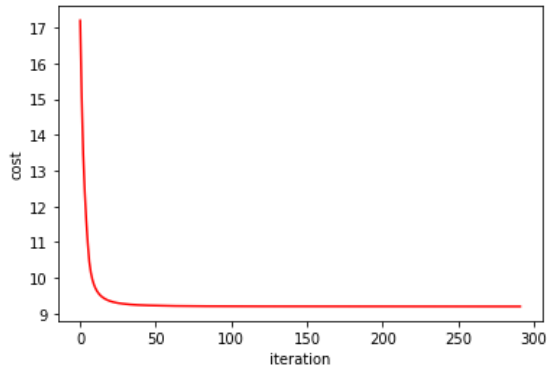}
    \caption{Values of the cost functional $\mathcal{J}$ with respect to the iterations.}\label{figure2contr}
\end{figure}

\newpage

\appendix

\section{Notions from convex, variational and functional analyses}\label{appendix}

\subsection{Reminders from convex and variational analyses}\label{rappel}

For notions and results presented in this section, we refer to standard references such as~\cite{BREZ2,MINTY,ROCK2} and~\cite[Chapter~12]{ROCK}. In what follows $(\mathcal{H}, \dual{\cdot}{\cdot}_{\mathcal{H}})$ stands for a general real Hilbert space.

\begin{myDefn}[Domain and epigraph]\label{epidom}
Let  $\phi \,: \, \mathcal{H}\rightarrow \mathbb{R}\cup\left\{\pm \infty \right\}$.
The \emph{domain} and the \emph{epigraph} of~$\phi$ are respectively defined by
$$
\mathrm{dom}\left(\phi\right):=\left\{x\in \mathcal{H} \mid \phi(x)<+\infty \right\} \quad \text{and} \quad
\mathrm{epi}\left(\phi\right):=\left\{(x,\nu)\in \mathcal{H}\times\mathbb{R}\mid \phi(x)\leq \nu \right\}.
$$
\end{myDefn}
Recall that $\phi \,: \, \mathcal{H}\rightarrow \mathbb{R}\cup\left\{\pm \infty \right\}$ is said to be \textit{proper} if $\mathrm{dom}(\phi)\neq \emptyset$ and $\phi(x)>-\infty$ 
for all~$x\in\mathcal{H}$. Moreover, $\phi$ is a convex (resp.\ lower semi-continuous) function on $\mathcal{H}$ if and only if~$\mathrm{epi}(\phi)$ is a convex (resp.\ closed) subset of~$\mathcal{H}\times\R$.

\begin{myDefn}[Support function]\label{supportfunc}
Let $\mathrm{C}$ be a nonempty closed convex subset of $\mathcal{H}$. The support function $\xi_{\mathrm{C}}$ of $\mathrm{C}$ is the map defined by
$$
\fonction{\xi_{\mathrm{C}}}{\mathcal{H}}{\mathbb{R}\cup\left\{+\infty\right\}}{x}{\displaystyle \xi_{\mathrm{C}}(x):=\underset{y\in\mathrm{C}}{\sup}\dual{x}{y}_{\mathcal{H}}.}
$$
\end{myDefn}

\begin{myDefn}[Convex subdifferential operator]\label{sousdiff}
 Let $\phi  :  \mathcal{H} \rightarrow \R\cup\left\{+\infty\right\}$ be a proper function. We denote by $\partial{\phi}  :  \mathcal{H} \rightrightarrows \mathcal{H}$ the \emph{convex subdifferential operator} of $\phi$, defined by 
 $$
 \partial{\phi}(x):=\left\{y\in\mathcal{H} \mid \forall z\in\mathcal{H}\text{, } \dual{y}{z-x}_{\mathcal{H}}\leq \phi(z)-\phi(x)\right\},
 $$
for all $x\in\mathcal{H}$.
\end{myDefn} 

\begin{myDefn}[Proximal operator]\label{proxi}
 Let $\phi  :  \mathcal{H} \rightarrow \R\cup\left\{+\infty\right\}$ be a proper lower semi-continuous convex function. The \emph{proximal operator} associated with $\phi$ is the map~$\mathrm{prox}_{\phi}  :  \mathcal{H} \rightarrow \mathcal{H}$ defined by
 $$
      \mathrm{prox}_{\phi}(x):=\underset{y\in \mathcal{H}}{\argmin}\left[ \phi(y)+\frac{1}{2}\left \| y-x \right \|^{2}_{\mathcal{H}}\right]=(\mathrm{I}+\partial \phi)^{-1}(x),
 $$
for all $x\in\mathcal{H}$, where $\mathrm{I}  :  \mathcal{H}\rightarrow \mathcal{H}$ stands for the identity operator.
\end{myDefn}
The proximal operator have been introduced by J.-J. Moreau in 1965 (see~\cite{MOR}) and can be seen as a generalization of the classical projection operators onto nonempty closed convex subsets. It is well-known that, if $\phi  :  \mathcal{H} \rightarrow \R\cup\left\{+\infty\right\}$ is a proper lower semi-continuous convex function, then~$\partial{\phi}$ is a maximal monotone operator (see, e.g.,~\cite{ROCK2}), and thus the proximal operator~$\mathrm{prox}_{\phi}$ is well-defined and a single-valued map (see, e.g.,~\cite[Chapter II]{BREZ2}).

We pursue with the following classical result which is crucial to prove the existence of a unique weak solution to the tangential Signorini problem (see Proposition~\ref{existenceSignorinitang}).

\begin{myProp}\label{gradientequi}
Let $\phi \,: \, \mathcal{H}\rightarrow \mathbb{R}$ be a Fréchet differentiable convex function and~$\mathrm{C}$ be a nonempty convex subset of $\mathcal{H}$. Let $y\in\mathrm{C}$ and $x\in\mathcal{H}$. Then the following variational inequalities are equivalent:
\begin{enumerate}[label={\rm (\roman*)}]
\item $\displaystyle \varphi(z)-\varphi(y)\geq\dual{x-y}{z-y}_{\mathcal{H}},\qquad \forall z\in\mathrm{C};$
\item $\dual{\nabla{\varphi(y)}}{z-y}_{\mathcal{H}}\geq\dual{x-y}{z-y}_{\mathcal{H}}, \qquad\forall z\in\mathrm{C}.$
\end{enumerate}
\end{myProp}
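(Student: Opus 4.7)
The plan is to establish the two implications separately, exploiting on one side the convexity of $\mathrm{C}$ to test the inequality along segments and pass to the limit, and on the other side the classical gradient inequality available for Fréchet differentiable convex functions.

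For the implication (i) $\Rightarrow$ (ii), I would fix an arbitrary $z\in\mathrm{C}$ and use the convexity of $\mathrm{C}$: for each $t\in(0,1)$, the point $y_t := y + t(z-y) = (1-t)y + tz$ belongs to $\mathrm{C}$. Applying (i) with $y_t$ in place of $z$ yields
$$
\varphi(y_t)-\varphi(y)\geq t\dual{x-y}{z-y}_{\mathcal{H}}.
$$
Dividing by $t>0$ and letting $t\to 0^+$, the Fréchet differentiability of $\varphi$ at $y$ gives
$$
\frac{\varphi(y+t(z-y))-\varphi(y)}{t} \longrightarrow \dual{\nabla\varphi(y)}{z-y}_{\mathcal{H}},
$$
which immediately yields (ii).

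For the converse implication (ii) $\Rightarrow$ (i), the key ingredient is the well-known gradient inequality for Fréchet differentiable convex functions on a Hilbert space: for all $z\in\mathcal{H}$,
$$
\varphi(z)-\varphi(y)\geq \dual{\nabla\varphi(y)}{z-y}_{\mathcal{H}}.
$$
(This is the standard characterization of convexity via the monotone/supporting hyperplane property of the gradient, available in any textbook on convex analysis.) Restricting to $z\in\mathrm{C}$ and combining with (ii) yields
$$
\varphi(z)-\varphi(y)\geq \dual{\nabla\varphi(y)}{z-y}_{\mathcal{H}}\geq \dual{x-y}{z-y}_{\mathcal{H}},
$$
which is precisely (i).

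There is no genuine obstacle here: both directions are standard, the only technicalities being the convexity of $\mathrm{C}$ (used once to form admissible test points $y_t$) and the Fréchet differentiability of $\varphi$ (used once to compute the directional derivative as a limit of difference quotients). The result can also be phrased as the statement that the variational inequality characterizing the minimizer of $z \mapsto \varphi(z) - \dual{x-y}{z}_{\mathcal{H}}$ over $\mathrm{C}$ at the point $y$ is equivalent to its linearized (first-order) version, which is the usual optimality condition for smooth convex problems on a convex set.
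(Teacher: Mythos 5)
Your proof is correct. The paper states this proposition as a classical result and gives no proof of its own (it only points to standard references on convex and variational analysis), and your argument is exactly the standard one that is being implicitly invoked: testing (i) along the segment $y+t(z-y)\in\mathrm{C}$ and passing to the limit $t\to 0^+$ for one direction, and chaining the gradient inequality $\varphi(z)-\varphi(y)\geq\dual{\nabla\varphi(y)}{z-y}_{\mathcal{H}}$ with (ii) for the other. Nothing is missing.
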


In what follows, some definitions related to the notion of twice epi-differentiability are recalled (for more details, see~\cite[Chapter 7, section B p.240]{ROCK} for the finite-dimensional case and~\cite{DO} for the infinite-dimensional one). The strong (resp.\ weak) convergence of a sequence in~$\mathcal{H}$ will be denoted by~$\rightarrow$ (resp.\ $\rightharpoonup$) and note that all limits with respect to~$t$ will be considered for~$t \to 0^+$.
\begin{myDefn}[Mosco-convergence]\label{limitemuch}
The \emph{outer}, \emph{weak-outer}, \emph{inner} and \emph{weak-inner limits} of a parameterized family~$(A_{t})_{t>0}$ of subsets of $\mathcal{H}$ are respectively defined by
\begin{eqnarray*}
      \mathrm{lim}\sup A_{t}&:=&\left\{ x\in \mathcal{H} \mid \exists (t_{n})_{n\in\mathbb{N}}\rightarrow 0^{+}, \exists \left(x_{n}\right)_{n\in\mathbb{N}}\rightarrow x, \forall n\in\mathbb{N}, x_{n}\in A_{t_{n}}\right\},\\
     \mathrm{w}\text{-}\mathrm{lim}\sup A_{t}&:=&\left\{ x\in \mathcal{H} \mid \exists (t_{n})_{n\in\mathbb{N}}\rightarrow 0^{+}, \exists \left(x_{n}\right)_{n\in\mathbb{N}}\rightharpoonup x, \forall n\in\mathbb{N}, x_{n}\in A_{t_{n}}\right\},\\
     \mathrm{lim}\inf A_{t}&:=&\left\{ x\in \mathcal{H} \mid \forall (t_{n})_{n\in\mathbb{N}}\rightarrow 0^{+}, \exists \left(x_{n}\right)_{n\in\mathbb{N}}\rightarrow x, \exists N\in\mathbb{N}, \forall n\geq N, x_{n}\in A_{t_{n}}\right\},\\
     \mathrm{w}\text{-}\mathrm{lim}\inf A_{t}&:=&\left\{ x\in \mathcal{H} \mid \forall (t_{n})_{n\in\mathbb{N}}\rightarrow 0^{+}, \exists \left(x_{n}\right)_{n\in\mathbb{N}}\rightharpoonup x, \exists N\in\mathbb{N}, \forall n\geq N, x_{n}\in A_{t_{n}}\right\}.
\end{eqnarray*}
The family~$(A_{t})_{t>0}$ is said to be \emph{Mosco-convergent} if~$
\mathrm{w}\text{-}\mathrm{lim}\sup A_{t}\subset\mathrm{lim}\inf A_{t}
$. In that case all the previous limits are equal and we write
$$
     \mathrm{M}\text{-}\mathrm{lim} \,  A_{t}:=\mathrm{lim}\inf A_{t}=\mathrm{lim}\sup A_{t}=\mathrm{w}\text{-}\mathrm{lim}\inf A_{t}=\mathrm{w}\text{-}\mathrm{lim}\sup A_{t}.
$$
\end{myDefn}
\begin{myDefn}[Mosco epi-convergence]
  Let $(\phi_{t})_{t>0}$ be a parameterized family of functions $\phi_{t}  : \mathcal{H}\rightarrow \mathbb{R}\cup\left\{\pm \infty \right\}$ for all $t>0$.
 We say that $(\phi_{t})_{t>0}$ is \emph{Mosco epi-convergent} if~$(\mathrm{epi}(\phi_{t}))_{t>0}$ is Mosco-convergent in~$\mathcal{H} \times \R$. Then we denote by $\mathrm{ME}\text{-}\mathrm{lim}~ \phi_{t}  :  \mathcal{H}\rightarrow \mathbb{R}\cup\left\{\pm \infty \right\}$ the function characterized by its epigraph~$\mathrm{epi}\left(\mathrm{ME}\text{-}\mathrm{lim}~\phi_{t}\right):=\mathrm{M}\text{-}\mathrm{lim}$ $\displaystyle \mathrm{epi}\left(\phi_{t}\right)$ and we say that $(\phi_{t})_{t>0}$ Mosco epi-converges to~$\mathrm{ME}\text{-}\mathrm{lim}~\phi_{t}$.
 \end{myDefn}
 
The proof of the next proposition can be found in~\cite[Proposition 3.19 p.297]{ATTOUCH}.
\begin{myProp}[Characterization of Mosco epi-convergence]\label{caractMosco}
Let $(\phi_{t})_{t>0}$ be a parameterized family of functions~$\phi_{t}  :  \mathcal{H}\rightarrow \mathbb{R}\cup\left\{\pm \infty \right\}$ for all $t>0$ and let~$\phi  :  \mathcal{H}\rightarrow \mathbb{R}\cup\left\{\pm \infty \right\}$. Then $(\phi_{t})_{t>0}$ Mosco epi-converges to $\phi$ if and only if, for all $x\in \mathcal{H}$, the two conditions:
\begin{enumerate}[label={\rm (\roman*)}]
    \item there exists $(x_{t})_{t>0}\rightarrow x$ such that $\mathrm{lim}\sup \phi_{t}(x_{t})\leq \phi(x)$;
    \item for all $(x_{t})_{t>0}\rightharpoonup  x$, $\mathrm{lim}\inf \phi_{t}(x_{t})\geq \phi(x)$;
\end{enumerate}
are both satisfied.
\end{myProp}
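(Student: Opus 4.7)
The plan is to decompose Mosco epi-convergence of $(\phi_t)$ to $\phi$, which by definition means $\mathrm{epi}(\phi) = \mathrm{M}\text{-}\mathrm{lim}\,\mathrm{epi}(\phi_t)$ in $\mathcal{H} \times \R$, into the two set-theoretic inclusions $\mathrm{epi}(\phi) \subset \mathrm{lim}\inf\,\mathrm{epi}(\phi_t)$ and $\mathrm{w}\text{-}\mathrm{lim}\sup\,\mathrm{epi}(\phi_t) \subset \mathrm{epi}(\phi)$; since $\mathrm{lim}\inf\,A_t \subset \mathrm{lim}\sup\,A_t \subset \mathrm{w}\text{-}\mathrm{lim}\sup\,A_t$ holds for any family of sets, and $\mathrm{lim}\inf\,A_t \subset \mathrm{w}\text{-}\mathrm{lim}\inf\,A_t$ as well, these two inclusions force equality of all four set-limits to $\mathrm{epi}(\phi)$. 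I will then prove that each of these two inclusions is equivalent to one of the conditions (i) and (ii). Throughout, the degenerate cases $\phi(x) = +\infty$ and $\phi(x) = -\infty$ will be handled separately but trivially: if $\phi(x) = +\infty$ any family $(x_t) \to x$ fulfills (i), while (ii) imposes no real constraint; if $\phi(x) = -\infty$, (ii) requires showing that $\phi_t(x_t)$ cannot stay bounded below along any family $x_t \rightharpoonup x$, which follows from the outer inclusion via a truncation.

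For the equivalence between condition (i) and $\mathrm{epi}(\phi) \subset \mathrm{lim}\inf\,\mathrm{epi}(\phi_t)$, first assume the inclusion and fix $x \in \mathrm{dom}(\phi)$ so that $(x, \phi(x)) \in \mathrm{epi}(\phi)$. By the inner-limit definition, for any sequence $(t_n) \to 0^+$ one can select a sequence $(y_n, \mu_n) \in \mathrm{epi}(\phi_{t_n})$ converging strongly to $(x, \phi(x))$; then $\phi_{t_n}(y_n) \leq \mu_n \to \phi(x)$, which gives (i) along this sequence, and by a standard diagonal selection over countably many vanishing scales one extends this to a family $(x_t)_{t > 0} \to x$ with $\limsup_{t \to 0^+} \phi_t(x_t) \leq \phi(x)$. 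Conversely, assume (i) and take $(x, \nu) \in \mathrm{epi}(\phi)$. Pick $(x_t) \to x$ with $\limsup \phi_t(x_t) \leq \phi(x) \leq \nu$ and set $\nu_t := \max(\phi_t(x_t), \nu - \varepsilon_t)$ for some $\varepsilon_t \to 0^+$ chosen so that $\nu_t \to \nu$; by construction $(x_t, \nu_t) \in \mathrm{epi}(\phi_t)$ and $(x_t, \nu_t) \to (x, \nu)$, hence $(x, \nu) \in \mathrm{lim}\inf\,\mathrm{epi}(\phi_t)$.

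For the equivalence between condition (ii) and $\mathrm{w}\text{-}\mathrm{lim}\sup\,\mathrm{epi}(\phi_t) \subset \mathrm{epi}(\phi)$, first assume the inclusion and let $(x_t)_{t > 0}$ weakly converge to $x$. Choosing a sequence $t_n \to 0^+$ achieving $L := \liminf_{t \to 0^+} \phi_t(x_t)$, the pairs $(x_{t_n}, \phi_{t_n}(x_{t_n})) \in \mathrm{epi}(\phi_{t_n})$ weakly converge, up to extraction, to $(x, L)$; the inclusion gives $(x, L) \in \mathrm{epi}(\phi)$, hence $\phi(x) \leq L$, which is (ii). Conversely, assume (ii) and let $(x, \nu) \in \mathrm{w}\text{-}\mathrm{lim}\sup\,\mathrm{epi}(\phi_t)$, witnessed by $t_n \to 0^+$ and $(x_n, \nu_n) \rightharpoonup (x, \nu)$ with $\phi_{t_n}(x_n) \leq \nu_n$. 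Extending the selection into a family $(y_t)_{t > 0}$ with $y_{t_n} := x_n$ and $y_t := x$ for $t \notin \{t_n\}$ yields $y_t \rightharpoonup x$ as $t \to 0^+$, and the inequality $\liminf_{t \to 0^+} \phi_t(y_t) \leq \liminf_n \phi_{t_n}(x_n) \leq \liminf_n \nu_n = \nu$ combined with (ii) gives $\phi(x) \leq \nu$, i.e.\ $(x, \nu) \in \mathrm{epi}(\phi)$.

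The main obstacle is the careful reconciliation between the sequential formulations hidden in the paper's definitions of inner, outer, weak-inner and weak-outer limits and the continuous-parameter formulations appearing in (i) and (ii); beyond the diagonal/extension constructions sketched above and the handling of infinite values of $\phi(x)$ (where one must either truncate epigraphs at height $-n$ or argue directly by contradiction), all remaining steps reduce to elementary manipulations of $\liminf$/$\limsup$ of real sequences and of epigraph membership.
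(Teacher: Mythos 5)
The paper does not actually prove this proposition: it is quoted with a citation to H.\ Attouch's book (Proposition 3.19 there), so there is no in-paper argument to compare yours against. On its own merits, your proof is correct and follows the standard route: Mosco epi-convergence to $\phi$ is exactly the pair of inclusions $\mathrm{epi}(\phi)\subset\mathrm{lim}\inf\,\mathrm{epi}(\phi_t)$ and $\mathrm{w}\text{-}\mathrm{lim}\sup\,\mathrm{epi}(\phi_t)\subset\mathrm{epi}(\phi)$ (these force all four set limits to coincide with $\mathrm{epi}(\phi)$ via the elementary chain of inclusions you cite), and each inclusion is equivalent to one of (i), (ii). Your verification of the four implications is sound, including the $\max(\phi_t(x_t),\nu-\varepsilon_t)$ device for lifting a recovery family to an arbitrary epigraph point and the truncation at height $-M$ for the case $\phi(x)=-\infty$. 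The one place I would urge you to write out in full is the passage you dismiss as "standard diagonal selection": the paper's inner limit is defined by a quantifier over all sequences $t_n\to0^+$, whereas (i) asks for a single family indexed by the continuous parameter $t$. The clean way to bridge this is to show, by contradiction against the sequential definition, that for every $\varepsilon>0$ there is $\delta>0$ such that every $t\in(0,\delta)$ admits $y$ with $\|y-x\|<\varepsilon$ and $\phi_t(y)<\phi(x)+\varepsilon$, and then select $x_t$ on the dyadic-type scales this produces; as written, "diagonal selection" gestures at this but does not establish it. With that step made explicit the proof is complete.
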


Now let us recall the notion of twice epi-differentiability introduced by R.T.~Rockafellar in~1985 (see~\cite{Rockafellar}) that generalizes the classical notion of second-order derivative to nonsmooth convex functions.

 \begin{myDefn}[Twice epi-differentiability]\label{epidiff}
   A proper lower semi-continuous convex function~$\phi  : \mathcal{H}\rightarrow \mathbb{R}\cup\left\{+\infty \right\}$ is said to be \emph{twice epi-differentiable} at $x\in\mathrm{dom}(\phi)$ for $y\in\partial\phi(x)$ if the family of second-order difference quotient functions $(\delta_{t}^{2}\phi(x \mid y))_{t>0}$ defined by
$$
  \fonction{ \delta_{t}^{2}\phi(x\mid y) }{\mathcal{H}}{\mathbb{R}\cup\left\{+\infty\right\}}{z}{\displaystyle\frac{\phi(x+t z)-\phi(x)-t\dual{ y}{z}_{\mathcal{H}}}{t^{2}},}
$$
for all $t>0$, is Mosco epi-convergent. In that case we denote by
$$
\mathrm{d}_{e}^{2}\phi(x \mid y):=\mathrm{ME}\text{-}\mathrm{lim}~\delta_{t}^{2}\phi(x \mid y),
$$
which is called the \emph{second-order epi-derivative} of $\phi$ at $x$ for $y$.
\end{myDefn}

\begin{myRem}\normalfont\label{diffsecond}
In the case where $\phi$ is twice Fréchet differentiable at $x\in\mathcal{H}$, then $\phi$ is twice epi-differentiable at $x$ for $\nabla{\phi}(x)$ and
$$
\mathrm{d}_{e}^{2}\phi(x\mid\nabla{\phi}(x))(z)=\frac{1}{2}\mathrm{D}^{2}\phi(x)(z,z),\qquad\forall z\in\mathcal{H},
$$
where $\mathrm{D}^2\phi(x)$ stands for the second-order Fréchet differential of $\phi$ at $x$. Note that the factor~$\frac{1}{2}$ could be removed if the family of second-order difference quotient functions is defined
with a factor~$\frac{1}{2}$ in the denominator (see the original definition in~\cite{Rockafellar}). 
\end{myRem}

In the above classical definition of twice epi-differentiability, the function $\phi$ does not depend on the parameter~$t$. However, in this paper, the parameterized Tresca friction functional does. Therefore we use an extended version of twice epi-differentiability which has been recently introduced in~\cite{8AB}. To this aim, when considering a function $\Phi  :  \mathbb{R}_{+}\times \mathcal{H}\rightarrow \mathbb{R}\cup\left\{+\infty\right\}$ such that, for all~$t\geq0$, $\Phi(t,\cdot)$ is a proper function on $\mathcal{H}$, we will make use of the two following notations: $\partial \Phi(0,\mathord{\cdot} )(x)$ stands for the convex subdifferential operator at~$x\in\mathcal{H}$ of the map~$w\in\mathcal{H} \mapsto \Phi(0,w)\in \R\cup\left\{+\infty\right\}$, and~$\Phi^{-1}(\mathord{\cdot} , \mathbb{R}):=\left\{ x\in\mathcal{H}\mid \forall t\geq0, \; \Phi(t,x)\in\R \right\}$.

 \begin{myDefn}[Twice epi-differentiability depending on a parameter]\label{epidiffpara}
Let~$\Phi  :  \mathbb{R}_{+}\times \mathcal{H}\rightarrow \mathbb{R} \cup\left\{+\infty\right\}$ be a function such that, for all $t\geq0$, $\Phi(t,\cdot)$ is a proper lower semi-continuous convex function on $\mathcal{H}$. The function $\Phi$ is said to be \emph{twice epi-differentiable} at $x\in \Phi^{-1}(\mathord{\cdot} , \mathbb{R})$ for~$y\in\partial \Phi(0,\mathord{\cdot} )(x)$ if the family of second-order difference quotient functions $(\Delta_{t}^{2}\Phi(x \mid y))_{t>0}$ defined by
$$
  \fonction{\Delta_{t}^{2}\Phi(x \mid y) }{\mathcal{H}}{\mathbb{R}\cup\left\{+\infty\right\}}{z}{\displaystyle\frac{\Phi(t,x+t z)-\Phi(t,x)-t\dual{ y}{z}_{\mathcal{H}}}{t^{2}},}
$$
for all $t>0$, is Mosco epi-convergent. In that case, we denote by
$$
\mathrm{D}_{e}^{2}\Phi(x \mid y):=\mathrm{ME}\text{-}\mathrm{lim}~\Delta_{t}^{2}\Phi(x \mid y) ,
$$
which is called the \emph{second-order epi-derivative} of $\Phi$ at $x$ for $y$.
\end{myDefn}
Note that, if the function $\Phi$ is $t$-independent in Definition~\ref{epidiffpara}, then we recover Definition~\ref{epidiff}. Finally the following theorem is the key point in order to derive our main result in this paper. It is a particular case of a more general theorem that can be found in~\cite[Theorem 4.15 p.1714]{8AB}.
\begin{myTheorem}\label{TheoABC2018}
Let~$\Phi  :  \mathbb{R}_{+}\times \mathcal{H}\rightarrow \mathbb{R} \cup \left\{+\infty\right\}$ be a function such that, for all $t\geq0$, $\Phi(t,\cdot)$ is a proper lower semi-continuous convex function on $\mathcal{H}$. Let $F  :  \mathbb{R}_{+}\rightarrow \mathcal{H}$ and let~$u  :  \mathbb{R}_{+}\rightarrow \mathcal{H}$ be defined by
$$
    u(t):=\mathrm{prox}_{\Phi(t,\mathord{\cdot} )}(F(t)),
$$
for all~$t\geq 0$. If the conditions: 
\begin{enumerate}[label={\rm (\roman*)}]
    \item $F$ is differentiable at $t=0$;
    \item $\Phi$ is twice epi-differentiable at $u(0)$ for $F(0)-u(0)\in\partial \Phi(0,\mathord{\cdot} )(u(0))$;
    \item $\mathrm{D}_{e}^{2}\Phi(u(0)|F(0)-u(0))$ is a proper function on $\mathcal{H}$;
\end{enumerate}
are satisfied, then $u$ is differentiable at $t=0$ with
$$
u'(0)=\mathrm{prox}_{\mathrm{D}_{e}^{2}\Phi(u(0)|F(0)-u(0))}(F'(0)).
$$
\end{myTheorem}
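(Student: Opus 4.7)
The strategy is to cast the problem into the framework of Theorem~\ref{TheoABC2018} applied to the map $t \mapsto u_t = \mathrm{prox}_{\Phi(t,\cdot)}(F_t)$, and then translate the resulting abstract identity $u_0' = \mathrm{prox}_{\mathrm{D}_e^2\Phi(u_0\mid F_0-u_0)}(F_0')$ into the weak formulation of the tangential Signorini problem~\eqref{caractu0DNT}.

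First, I would verify the hypotheses of Theorem~\ref{TheoABC2018}. From Hypotheses~\ref{hypo1} and~\ref{hypo2} on $f_t$ and $h_t$, and by linearity of the parameterized Dirichlet-Neumann problem~\eqref{PbNeumannDirichletPara} combined with the continuous dependence estimate of Proposition~\ref{existenceunicitéDN}, the map $t\mapsto F_t$ is differentiable at $t=0$ with derivative $F_0'$ being the unique solution of the linear Dirichlet-Neumann problem with data $(f_0', h_0')$. Hypothesis~\ref{hypo4} directly gives the twice epi-differentiability of $\Phi$ at $u_0$ for $F_0 - u_0 \in \partial\Phi(0,\cdot)(u_0)$ (note that $F_0 - u_0 = F_0 - \mathrm{prox}_{\Phi(0,\cdot)}(F_0) \in \partial\Phi(0,\cdot)(u_0)$ by definition of the proximal operator). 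Combining the formula~\eqref{hypoth1} with the pointwise second-order epi-derivative computed in Proposition~\ref{épidiffgabs} and the characterization of the normal cone given by Lemma~\ref{conenormalde}, I would split the integral over $\Gamma_{\mathrm{N}}$ into the three measurable disjoint pieces $\Gamma_{\mathrm{N}^{u_0,g_0}_{\mathrm{T}}}$, $\Gamma_{\mathrm{N}^{u_0,g_0}_{\mathrm{R}}}$, $\Gamma_{\mathrm{N}^{u_0,g_0}_{\mathrm{S}}}$ and obtain
\[
\mathrm{D}_e^2\Phi(u_0\mid F_0-u_0)(w) = \Psi(w) + \iota_{\mathcal{K}}(w) + L(w),
\]
where $\Psi$ is the Fréchet differentiable convex functional from Lemma~\ref{fonctionnelleannexe} (with $k = \frac{g_0}{\|u_{0_\tau}\|} \in \LL^4(\Gamma_{\mathrm{N}^{u_0,g_0}_{\mathrm{R}}})$ by Hypothesis~\ref{hyposup} and $v_\tau = \frac{u_{0_\tau}}{\|u_{0_\tau}\|}$), $\mathcal{K}$ is the nonempty closed convex subset of $\HH^1_{\mathrm{D}}(\Omega,\R^d)$ imposing $w_\tau = 0$ on $\Gamma_{\mathrm{N}^{u_0,g_0}_{\mathrm{T}}}$ and $w_\tau \in \R_-\frac{\sigma_\tau(u_0)}{g_0}$ on $\Gamma_{\mathrm{N}^{u_0,g_0}_{\mathrm{S}}}$ (using that $\sigma_\tau(F_0)=0$ so $\sigma_\tau(F_0-u_0) = -\sigma_\tau(u_0)$), and $L$ is a continuous linear functional gathering the $g_0'$-terms. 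In particular $\mathrm{D}_e^2\Phi(u_0\mid F_0-u_0)$ is proper, lower semi-continuous and convex, which validates the third hypothesis of Theorem~\ref{TheoABC2018}.

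Applying Theorem~\ref{TheoABC2018} yields the differentiability of $t\mapsto u_t$ at $t=0$ and the identity $u_0' = \mathrm{prox}_{\mathrm{D}_e^2\Phi(u_0\mid F_0-u_0)}(F_0')$, which by definition of the proximal operator is equivalent to
\[
F_0' - u_0' \in \partial \mathrm{D}_e^2\Phi(u_0\mid F_0-u_0)(u_0').
\]
Expanding this subdifferential inclusion with the decomposition obtained in the previous step gives $u_0' \in \mathcal{K}$ together with the variational inequality
\[
\dual{F_0'-u_0'}{w-u_0'}_{\HH^1_{\mathrm{D}}(\Omega,\R^d)} \le \Psi(w)-\Psi(u_0') + L(w-u_0'), \qquad \forall w\in\mathcal{K}.
\]
Using that $\Psi$ is convex and Fréchet differentiable (Lemma~\ref{fonctionnelleannexe}) and applying Proposition~\ref{gradientequi}, I can replace $\Psi(w)-\Psi(u_0')$ by $\dual{\nabla\Psi(u_0')}{w-u_0'}_{\HH^1_{\mathrm{D}}(\Omega,\R^d)}$; then substituting the explicit expression of $\nabla\Psi(u_0')$ provided by Lemma~\ref{fonctionnelleannexe} and the weak formulation of the Dirichlet-Neumann problem solved by $F_0'$ (to convert $\dual{F_0'}{\cdot}$ into volume and boundary integrals involving $f_0'$ and $h_0'$) produces exactly the variational inequality~\eqref{FaibleSignorini} corresponding to the tangential Signorini problem~\eqref{caractu0DNT} with data $(f_0', h_0', k=\frac{g_0}{\|u_{0_\tau}\|}, \ell = -g_0', v_\tau = \frac{u_{0_\tau}}{\|u_{0_\tau}\|})$ on $\Gamma_{\mathrm{N}^{u_0,g_0}_{\mathrm{R}}}$ and the analogous data on $\Gamma_{\mathrm{N}^{u_0,g_0}_{\mathrm{S}}}$. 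Invoking Proposition~\ref{existenceSignorinitang} (applied with these parameters) then identifies $u_0'$ as the unique weak solution of~\eqref{caractu0DNT}.

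The main obstacle is the bookkeeping in the last step: verifying that the bilinear/linear terms arising from $\nabla\Psi(u_0')$, from $L$ and from the weak formulation of the Dirichlet-Neumann equation for $F_0'$ recombine exactly into the weak tangential Signorini formulation of Definition~\ref{weakformuletangentSigno}, with the correct signs on each of the three disjoint subsets of $\Gamma_{\mathrm{N}}$. The critical point is to recognize that the linear term coming from the normal cone contribution on $\Gamma_{\mathrm{N}^{u_0,g_0}_{\mathrm{S}}}$ (i.e.\ $g_0'\frac{\sigma_\tau(F_0-u_0)}{g_0}\cdot w_\tau = -g_0'\frac{\sigma_\tau(u_0)}{g_0}\cdot w_\tau$) is precisely the Signorini boundary term on $\Gamma_{\mathrm{N}^{u_0,g_0}_{\mathrm{S}}}$, while on $\Gamma_{\mathrm{N}^{u_0,g_0}_{\mathrm{T}}}$ the constraint $w_\tau=0$ absorbs the corresponding integrand, leaving exactly the announced Signorini problem.
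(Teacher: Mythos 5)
Your proposal does not prove the statement at hand. The statement is Theorem~\ref{TheoABC2018} itself: the abstract assertion that, under differentiability of $F$ at $t=0$ and twice epi-differentiability of $\Phi$ at $u(0)$ for $F(0)-u(0)$, the map $t\mapsto u(t)=\mathrm{prox}_{\Phi(t,\cdot)}(F(t))$ is differentiable at $t=0$ with $u'(0)=\mathrm{prox}_{\mathrm{D}_{e}^{2}\Phi(u(0)\mid F(0)-u(0))}(F'(0))$. What you have written is a proof plan for Theorem~\ref{caractu0derivDNT}, and its central step reads ``Applying Theorem~\ref{TheoABC2018} yields the differentiability of $t\mapsto u_t$\dots''. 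Relative to the target this is circular: you invoke as a black box exactly the result you were asked to establish, and everything else in your plan (the decomposition of $\mathrm{D}_e^2\Phi$, Lemma~\ref{fonctionnelleannexe}, Proposition~\ref{gradientequi}, the identification with the weak tangential Signorini formulation) concerns the downstream application, not the abstract theorem.

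A proof of Theorem~\ref{TheoABC2018} has to work at the level of Mosco epi-convergence. The key algebraic observation is that $w\in\partial\Delta_{t}^{2}\Phi(u(0)\mid y)(z)$ if and only if $y+tw\in\partial\Phi(t,\cdot)(u(0)+tz)$, where $y:=F(0)-u(0)$; applying this with $z_{t}:=\frac{u(t)-u(0)}{t}$ and using $F(t)-u(t)\in\partial\Phi(t,\cdot)(u(t))$ gives the exact identity
$$
\frac{u(t)-u(0)}{t}=\mathrm{prox}_{\Delta_{t}^{2}\Phi(u(0)\mid F(0)-u(0))}\left(\frac{F(t)-F(0)}{t}\right),\qquad\forall t>0.
$$
One then lets $t\to0^{+}$: hypothesis {\rm (i)} gives $\frac{F(t)-F(0)}{t}\to F'(0)$, hypotheses {\rm (ii)}--{\rm (iii)} give Mosco epi-convergence of the second-order difference quotients to a proper lower semi-continuous convex limit, and the convergence of the associated proximal operators along strongly convergent arguments (Attouch's theorem linking Mosco epi-convergence of convex functions to graph convergence of their subdifferentials, hence of their resolvents) yields the conclusion. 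None of this appears in your proposal. Note also that the paper itself does not reprove this result but quotes it as a particular case of~\cite[Theorem 4.15 p.1714]{8AB}; so either supply an argument along the lines above or cite that reference, but do not substitute a proof of the downstream Theorem~\ref{caractu0derivDNT} for it.
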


\subsection{Reminders from functional analysis}\label{rappelfunct}
Let $d\in\N^*$ be a positive integer, $\Omega$ be a nonempty bounded connected open subset of $\R^{d}$ with a~$\mathcal{C}^{1}$-boundary~$\Gamma:=\partial{\Omega}$ and $\nn$ be the outward-pointing unit normal vector to $\Gamma$. In what follows we consider a decomposition~$\Gamma=:\Gamma_{1}\cup\Gamma_{2}$ where~$\Gamma_{1}$ and $\Gamma_{2}$ are two measurable disjoint subsets of $\Gamma$. Let us recall some embeddings useful in this work, that can be found for instance in~\cite[Chapter~4, p.79]{ADAMS},~\cite{MarkusB},~\cite{BREZ}, and~\cite[Chapter~7, Section~2 p.395]{DAUTLIONS}.

\begin{myProp}\label{injections}
The continuous and dense embeddings:
\begin{itemize}
    \item $\HH^{1}(\Omega,\R^d){\hookrightarrow} \HH^{1/2}(\Gamma,\R^d){\hookrightarrow} \LL^{2}(\Gamma,\R^d){\hookrightarrow} \HH^{-1/2}(\Gamma,\R^d)$;
    \item $\LL^{2}(\Gamma,\R^d){\hookrightarrow} \LL^{1}(\Gamma,\R^d)$;
    \item $\HH^{1}(\Omega,\R^d){\hookrightarrow} \mathrm{L}^{2}(\Omega,\R^d)$;
    \item $\HH^{1/2}_{00}(\Gamma_{1},\R^d){\hookrightarrow} \LL^{2}(\Gamma_{1},\R^d){\hookrightarrow} \HH^{-1/2}_{00}(\Gamma_{1},\R^d)$;
\end{itemize}  are satisfied, where $\HH^{1/2}_{00}(\Gamma_{1},\R^d)$ can be identified to a linear subspace of $\HH^{1/2}(\Gamma,\R^d)$ defined by
$$
\HH^{1/2}_{00}(\Gamma_{1},\R^d):=\left\{w\in\LL^{2}(\Gamma_{1},\R^d) \mid \exists v\in\HH^{1}(\Omega,\R^d), \; v=w \text{ \textit{a.e.} on }\Gamma_{1} \text{ and } v=0 \text{ \textit{a.e.} on }\Gamma_{2} \right\},
$$
and $\HH^{-1/2}_{00}(\Gamma_{1},\R^d)$ stands for its dual space.
Furthermore the dense and compact embedding
$$
\HH^{1}(\Omega,\R^d)\hookdoubleheadrightarrow~\mathrm{L}^{2}(\Gamma,\R^d),
$$ 
holds true, and since $d\in\{2,3\}$, then we have the continuous embedding  $\HH^{1}(\Omega,\R^d){\hookrightarrow} \LL^{4}(\Gamma,\R^d)$.
\end{myProp}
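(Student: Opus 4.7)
The plan is essentially to invoke classical functional analysis results, since every embedding listed is standard and can be pieced together from the references already cited (Adams, Brezis, Dautray--Lions). I would organize the argument by first disposing of the straightforward inclusions and then separately handling the trace-type embeddings, which are the only non-trivial ingredients.

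First I would treat the easy inclusions. The embedding $\HH^1(\Omega,\R^d)\hookrightarrow \LL^2(\Omega,\R^d)$ is immediate from the definition of $\HH^1(\Omega,\R^d)$, with dense inclusion following from the density of $\mathcal{C}^\infty_c(\R^d)$-restrictions. The embedding $\LL^2(\Gamma,\R^d)\hookrightarrow \LL^1(\Gamma,\R^d)$ follows from the Cauchy--Schwarz inequality applied on the bounded measure space $\Gamma$ (finite since $\Omega$ is bounded with $\mathcal{C}^1$-boundary), with density being clear because simple functions are dense in both. The embedding $\HH^{1/2}_{00}(\Gamma_1,\R^d)\hookrightarrow \LL^2(\Gamma_1,\R^d)$ follows directly from the defining identification of $\HH^{1/2}_{00}(\Gamma_1,\R^d)$ as a subspace of $\HH^{1/2}(\Gamma,\R^d)\hookrightarrow \LL^2(\Gamma,\R^d)$, using that $\Gamma_1$ has positive measure and that traces of $\mathcal{D}(\overline{\Omega},\R^d)$-functions vanishing on $\Gamma_2$ provide a dense subfamily.

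Next I would handle the trace-related inclusions. The embedding $\HH^1(\Omega,\R^d)\hookrightarrow \HH^{1/2}(\Gamma,\R^d)$ is the classical trace theorem for $\mathcal{C}^1$-domains (see, e.g., \cite[Chapter~7, Section~2]{DAUTLIONS} or \cite{ADAMS}), and its density follows from the surjectivity of the trace operator together with the density of $\mathcal{C}^\infty(\overline{\Omega},\R^d)\cap \HH^1(\Omega,\R^d)$ in $\HH^1(\Omega,\R^d)$. The embedding $\HH^{1/2}(\Gamma,\R^d)\hookrightarrow \LL^2(\Gamma,\R^d)$ is a consequence of the intrinsic definition of $\HH^{1/2}(\Gamma,\R^d)$ via the Slobodeckij seminorm, and is dense because $\mathcal{C}^\infty(\Gamma,\R^d)\subset \HH^{1/2}(\Gamma,\R^d)$ is dense in $\LL^2(\Gamma,\R^d)$. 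Then $\LL^2(\Gamma,\R^d)\hookrightarrow \HH^{-1/2}(\Gamma,\R^d)$ and the analogous $\LL^2(\Gamma_1,\R^d)\hookrightarrow \HH^{-1/2}_{00}(\Gamma_1,\R^d)$ follow by duality from the previous dense continuous embeddings, using the standard identification of $\LL^2$ with its dual (see \cite[Chapter~4]{ADAMS}).

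Finally, the compact embedding $\HH^1(\Omega,\R^d)\hookdoubleheadrightarrow \LL^2(\Gamma,\R^d)$ is obtained by composing the continuous trace $\HH^1(\Omega,\R^d)\to \HH^{1/2}(\Gamma,\R^d)$ with the compact embedding $\HH^{1/2}(\Gamma,\R^d)\hookdoubleheadrightarrow \LL^2(\Gamma,\R^d)$, which holds since $\Gamma$ is a compact $\mathcal{C}^1$-manifold (Rellich--Kondrachov on $\Gamma$, see \cite{ADAMS}). For the last continuous embedding $\HH^1(\Omega,\R^d)\hookrightarrow \LL^4(\Gamma,\R^d)$ in the case $d\in\{2,3\}$, I would invoke the sharp Sobolev trace embedding: traces of $\HH^1(\Omega,\R^d)$ lie in $\LL^q(\Gamma,\R^d)$ for all $q\leq \frac{2(d-1)}{d-2}$ when $d\geq 3$ and for every $q<\infty$ when $d=2$. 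For $d=3$ this gives precisely $q=4$, while for $d=2$ the value $q=4$ is admissible a fortiori, so $\HH^1(\Omega,\R^d)\hookrightarrow \LL^4(\Gamma,\R^d)$ in both cases (see \cite{BREZ,MarkusB}). The only truly delicate ingredient is the sharp value $q=4$ of the trace Sobolev exponent in dimension $d=3$; everything else reduces to definitions, duality or classical trace theory, so I would keep the exposition concise and refer the reader to the cited references for the detailed proofs.
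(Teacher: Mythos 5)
Your proposal is correct, and it takes essentially the same route as the paper: the paper gives no proof of this proposition at all, stating it as a recollection of classical results and pointing to exactly the references you invoke (Adams, Brezis, Dautray--Lions, and the trace-space literature). Your sketch simply makes explicit the standard arguments behind each item — including the correct identification of the critical trace exponent $q=\frac{2(d-1)}{d-2}=4$ for $d=3$ — so there is nothing to add or correct.
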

  
The next proposition is a particular case of a more general statement that can be found in~\cite[Section 2.9 p.56]{TUC}.
\begin{myProp}\label{Ident}
Let $v\in\HH^{-1/2}_{00}(\Gamma_{1},\R^d)$. If there exists $C\geq 0$ such that
$$
\dual{v}{w}_{\HH^{-1/2}_{00}(\Gamma_{1},\R^d)\times \HH^{1/2}_{00}(\Gamma_{1},\R^d)}\leq C\left \| w \right \|_{\LL^{2}(\Gamma_{1},\R^d)},
$$
for all $w\in \HH^{1/2}_{00}(\Gamma_{1},\R^d)$, then $v$ can be identified to an element $h\in \LL^{2}(\Gamma_{1},\R^d)$ with $$\dual{v}{w}_{\HH^{-1/2}_{00}(\Gamma_{1},\R^d)\times \HH^{1/2}_{00}(\Gamma_{1},\R^d)}=\dual{h}{w}_{\LL^{2}(\Gamma_{1},\R^d)},
$$
for all $w\in\HH^{1/2}_{00}(\Gamma_{1},\R^d)$.
\end{myProp}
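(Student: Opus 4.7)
The plan is to exploit the continuous and dense embedding $\HH^{1/2}_{00}(\Gamma_1, \R^d) \hookrightarrow \LL^2(\Gamma_1, \R^d)$ stated in Proposition~\ref{injections}, together with the Riesz representation theorem on the Hilbert space $\LL^2(\Gamma_1, \R^d)$. The hypothesis $\dual{v}{w} \leq C\|w\|_{\LL^2(\Gamma_1,\R^d)}$ is precisely what is needed to turn a continuous linear form on $\HH^{1/2}_{00}$ into one that is continuous for the (weaker) $\LL^2$-topology.

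First, I would consider the linear form
\begin{equation*}
\ell_v \colon w \in \HH^{1/2}_{00}(\Gamma_1,\R^d) \longmapsto \dual{v}{w}_{\HH^{-1/2}_{00}(\Gamma_1,\R^d) \times \HH^{1/2}_{00}(\Gamma_1,\R^d)} \in \R.
\end{equation*}
By hypothesis, $\ell_v$ satisfies $|\ell_v(w)| \leq C\|w\|_{\LL^2(\Gamma_1,\R^d)}$ for all $w\in \HH^{1/2}_{00}(\Gamma_1,\R^d)$ (noting that the bound can be applied to both $w$ and $-w$ to control the absolute value). Hence $\ell_v$ is a continuous linear form on $\HH^{1/2}_{00}(\Gamma_1,\R^d)$ endowed with the topology inherited from~$\LL^2(\Gamma_1,\R^d)$.

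Second, since $\HH^{1/2}_{00}(\Gamma_1,\R^d)$ is dense in $\LL^2(\Gamma_1,\R^d)$ by Proposition~\ref{injections}, $\ell_v$ admits a unique continuous extension $\tilde{\ell}_v \in (\LL^2(\Gamma_1,\R^d))^*$. Applying the Riesz representation theorem to the Hilbert space $\LL^2(\Gamma_1,\R^d)$ then yields a unique $h \in \LL^2(\Gamma_1,\R^d)$ such that
\begin{equation*}
\tilde{\ell}_v(w) = \dual{h}{w}_{\LL^2(\Gamma_1,\R^d)}, \qquad \forall w \in \LL^2(\Gamma_1,\R^d).
\end{equation*}
Restricting this identity to $w \in \HH^{1/2}_{00}(\Gamma_1,\R^d)$ and recalling that $\tilde{\ell}_v$ extends $\ell_v$ by construction, one obtains the announced identification.

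There is essentially no obstacle: the proof is a routine application of density/extension plus Riesz. The only point that deserves a quick word is that the extension $\tilde{\ell}_v$ is well-defined, which follows from the standard fact that a uniformly continuous map from a dense subset of a metric space into a complete space admits a unique continuous extension. This is why the $\LL^2$-bound in the hypothesis — rather than merely continuity for the $\HH^{1/2}_{00}$-norm — is crucial.
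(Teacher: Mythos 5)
Your proof is correct and is the standard argument: the hypothesis (applied to $\pm w$) makes $\ell_v$ continuous for the $\LL^{2}$-topology on the dense subspace $\HH^{1/2}_{00}(\Gamma_{1},\R^d)$, so it extends uniquely to $\LL^{2}(\Gamma_{1},\R^d)$ and Riesz representation yields $h$. The paper does not prove this proposition itself but only cites an external reference, and your argument is exactly the expected one for that result.
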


The next proposition, known as divergence formula, can be found in~\cite[Theorem 4.4.7 p.104]{ALLNUM}.

\begin{myTheorem}[Divergence formula]\label{div}
Let $v\in \HH_{\mathrm{div}}(\Omega, \R^{d\times d})$ where
$$
\HH_{\mathrm{div}}(\Omega, \R^{d\times d}):= \left\{ w\in \mathrm{L}^{2}(\Omega,\R^{d\times d})  \mid \mathrm{div} (w)\in \mathrm{L}^{2}(\Omega,\R^d) \right\},
$$
and $\mathrm{div}(w)$ is the vector whose the $i$-th component is defined by $\mathrm{div}(w)_i:=\mathrm{div}(w_{i})\in\LL^2(\Omega,\R)$, and where $w_i\in\mathrm{L}^{2}(\Omega,\R^{d})$ is the~$i$-th line of~$w$, for all $i\in[[1,d]]$ and for all~$w\in\HH_{\mathrm{div}}(\Omega, \R^{d\times d})$.
Then $v$ admits a normal trace, denoted by~$v\nn \in \HH^{-1/2}(\Gamma,\R^d)$, satisfying
$$
\displaystyle\int_{\Omega}\mathrm{div}(v)\cdot w+\int_{\Omega}v:\nabla w=\dual{v\nn}{w}_{\HH^{-1/2}(\Gamma,\R^d)\times \HH^{1/2}(\Gamma,\R^d)}, \qquad\forall w \in \HH^1(\Omega,\R^{d}).
$$
\end{myTheorem}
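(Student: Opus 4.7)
The plan is to reduce the matrix-valued divergence formula to the classical scalar $\HH(\mathrm{div})$-result applied row by row, then reassemble by summing over the components. Recall that the classical scalar divergence (Green's) formula asserts that the space $\HH(\mathrm{div},\Omega):=\{u\in\LL^2(\Omega,\R^d)\mid \mathrm{div}(u)\in\LL^2(\Omega,\R)\}$ admits a continuous normal trace operator $u\mapsto u\cdot\nn\in\HH^{-1/2}(\Gamma)$ satisfying
\begin{equation*}
\int_{\Omega}\mathrm{div}(u)\,\varphi+\int_{\Omega}u\cdot\nabla\varphi=\dual{u\cdot\nn}{\varphi}_{\HH^{-1/2}(\Gamma)\times\HH^{1/2}(\Gamma)},\qquad\forall \varphi\in\HH^{1}(\Omega,\R),
\end{equation*}
and that this trace operator is continuous from $\HH(\mathrm{div},\Omega)$ endowed with its graph norm into $\HH^{-1/2}(\Gamma)$. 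This classical fact is, for instance, the statement of~\cite[Theorem~4.4.7 p.104]{ALLNUM} in the scalar case.

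Given $v\in \HH_{\mathrm{div}}(\Omega,\R^{d\times d})$, I would first observe that, by the very definition of $\HH_{\mathrm{div}}(\Omega,\R^{d\times d})$, each row $v_i\in\LL^{2}(\Omega,\R^d)$ satisfies $\mathrm{div}(v_i)\in\LL^{2}(\Omega,\R)$, so that $v_i\in\HH(\mathrm{div},\Omega)$ for every $i\in[[1,d]]$. I can therefore apply the scalar divergence formula to each row $v_i$ with test function $\varphi=w_i\in\HH^{1}(\Omega,\R)$, where $w_i$ is the $i$-th component of an arbitrary $w\in\HH^{1}(\Omega,\R^d)$. This yields
\begin{equation*}
\int_{\Omega}\mathrm{div}(v_i)\,w_i+\int_{\Omega}v_i\cdot\nabla w_i=\dual{v_i\cdot\nn}{w_i}_{\HH^{-1/2}(\Gamma)\times\HH^{1/2}(\Gamma)},
\end{equation*}
for each $i\in[[1,d]]$, with $v_i\cdot\nn\in\HH^{-1/2}(\Gamma)$.

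Next I would define $v\nn\in\HH^{-1/2}(\Gamma,\R^d)$ as the element whose $i$-th component is $v_i\cdot\nn$. The associated duality pairing with any $w\in\HH^{1/2}(\Gamma,\R^d)$ is then, by definition, the sum $\sum_{i=1}^{d}\dual{v_i\cdot\nn}{w_i}_{\HH^{-1/2}(\Gamma)\times\HH^{1/2}(\Gamma)}$. Summing the scalar identities above over $i=1,\dots,d$, and recognizing that $\sum_{i=1}^{d}\mathrm{div}(v_i)\,w_i=\mathrm{div}(v)\cdot w$ and $\sum_{i=1}^{d} v_i\cdot\nabla w_i=v:\nabla w$ (since the $i$-th row of $\nabla w$ is $\nabla w_i$), delivers exactly the desired identity
\begin{equation*}
\int_{\Omega}\mathrm{div}(v)\cdot w+\int_{\Omega}v:\nabla w=\dual{v\nn}{w}_{\HH^{-1/2}(\Gamma,\R^d)\times\HH^{1/2}(\Gamma,\R^d)},\qquad\forall w\in\HH^{1}(\Omega,\R^d).
\end{equation*}

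The only real verification is that $v\nn$ so defined is indeed a well-defined element of $\HH^{-1/2}(\Gamma,\R^d)$, i.e.\ that the map $w\mapsto\sum_{i=1}^{d}\dual{v_i\cdot\nn}{w_i}$ is continuous on $\HH^{1/2}(\Gamma,\R^d)$; this follows immediately from the continuity of each scalar normal trace together with the fact that the component map $w\mapsto w_i$ is continuous from $\HH^{1/2}(\Gamma,\R^d)$ into $\HH^{1/2}(\Gamma)$. The main obstacle is therefore not conceptual but notational: keeping track of rows, components and of the identifications $\sum_i v_i\cdot\nabla w_i = v:\nabla w$ and $\sum_i\mathrm{div}(v_i)\,w_i = \mathrm{div}(v)\cdot w$ consistent with the conventions fixed at the beginning of Section~\ref{Mainresult1}.
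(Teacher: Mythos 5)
Your proof is correct. Note that the paper does not actually prove this statement: it is recalled in the appendix as a classical result with a citation to \cite[Theorem 4.4.7 p.104]{ALLNUM}, so there is no argument to compare against. Your row-by-row reduction to the scalar $\HH(\mathrm{div},\Omega)$ normal-trace theorem, followed by assembling the components into an element of $\HH^{-1/2}(\Gamma,\R^d)$ and using the conventions $\sum_{i}\mathrm{div}(v_i)w_i=\mathrm{div}(v)\cdot w$ and $\sum_{i}v_i\cdot\nabla w_i=v:\nabla w$ fixed in Section~\ref{Mainresult1}, is the standard and complete way to obtain the matrix-valued version from the cited scalar one.
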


\section*{Statements and Declarations}
\paragraph{Funding.}
The authors declare that no funds, grants, or other support were received during the preparation of this manuscript.
\paragraph{Competing Interests.}
The authors have no relevant financial or non-financial interests to disclose.

\bibliographystyle{abbrv}
\bibliography{biblio}

\end{document}